\renewcommand{\geq}{\geqslant}
\renewcommand{\leq}{\leqslant}
\renewcommand{\succeq}{\succcurlyeq}
\renewcommand{\le}{\leq}
\renewcommand{\ge}{\geq}
\newcommand{\arxiv}[1]{\href{http://arxiv.org/abs/#1}{arXiv:#1}}
\newcommand{\ie}{i.e.}
\DeclareMathOperator{\sign}{\mathsf{sign}}
\DeclareMathAlphabet{\mathbfcal}{OMS}{cmsy}{b}{n}
\DeclareMathAlphabet{\mathbbold}{U}{bbold}{m}{n}
\newcommand{\N}{\mathbb{N}} 
\newcommand{\R}{\mathbb{R}}    
\newcommand{\Q}{\mathbb{Q}}
\newcommand{\puiseux}{\mathbb{K}}
\newcommand{\nnpuiseux}{\mathbb{K}_{\geq 0}}
\newcommand{\trop}[1][]{\ifthenelse{\equal{#1}{}}{ \mathbb{T} }{ \mathbb{T}(#1) }}
\newcommand{\strop}[1][]{{\trop[#1]}_{\pm}}    
\newcommand{\postrop}[1][]{{\trop[#1]}_{+}}    
\newcommand{\negtrop}[1][]{{\trop[#1]}_{-}}
\newcommand{\tplus}{\oplus}  
\newcommand{\tsum}{\bigoplus}
\newcommand{\tdot}{\odot}
\newcommand{\tminus}{\ominus}  
\newcommand{\abs}[1]{|{#1}|}
\newcommand{\zero}{{-\infty}}
\DeclareMathOperator*{\val}{\mathsf{val}}
\DeclareMathOperator*{\lc}{\mathsf{lc}}
\DeclareMathOperator*{\sval}{\mathsf{sval}}
\newcommand{\bo}[1]{\mathbold{#1}}
\newcommand{\card}[1]{|{#1}|}
\newcommand{\spectra}{\mathcal{S}}
\newcommand{\bspectra}{\mathbfcal{S}}
\newcommand{\bsalg}{\mathbfcal{S}}
\newcommand{\homslin}{\slin^{h}}
\newcommand{\rhomslin}{\slin^{rh}}
\newcommand{\fhomspectra}{\spectra^{fh}}
\newcommand{\salg}{\mathscr{S}}
\newcommand{\slin}{\salg}
\newcommand{\lang}{\mathcal{L}}
\newcommand{\logroups}{\lang_{\mathrm{og}}}
\newtheorem{theorem}{Theorem}
\newtheorem{proposition}[theorem]{Proposition}
\newtheorem{corollary}[theorem]{Corollary}
\newtheorem{conjecture}[theorem]{Conjecture}
\newtheorem{lemma}[theorem]{Lemma}
\theoremstyle{definition}
\newtheorem{definition}[theorem]{Definition}
\newtheorem{assumption}{Assumption}
\renewcommand*{\theassumption}{\Alph{assumption}}
\theoremstyle{remark}
\newtheorem{remark}[theorem]{Remark}
\newtheorem{example}[theorem]{Example}
\newlist{assumpenum}{enumerate}{1}
\setlist[assumpenum]{label=(\roman*), ref=\theassumption~(\roman*)}
\newlist{theoenum}{enumerate}{1}
\setlist[theoenum]{label=(\alph*), ref=\thetheorem~(\alph*)}
\tikzset{grid/.style={gray!30,very thin}}
\tikzset{axis/.style={gray!50,->,>=stealth'}}
\tikzset{convex/.style={draw=none,fill=lightgray,fill opacity=0.7}}
\tikzset{convexborder/.style={very thick}}
\tikzset{point/.style={blue!50}}
\tikzset{hs/.style={fill opacity=0.3,fill=orange,draw=none}}
\tikzset{hsborder/.style={orange,ultra thick,dashdotted}}
\newcommand{\overbar}[1]{\mkern 1.5mu\overline{\mkern-1.5mu#1\mkern-1.5mu}\mkern 1.5mu}
\newcommand{\polyh}{\mathcal{W}}
\newcommand{\bpolyh}{\mathbfcal{W}}
\newcommand{\conv}{\mathrm{conv}}
\newcommand{\psupport}{\Lambda}
\newcommand{\loew}{\succeq}
\newcommand{\inter}{\mathrm{int}}
\newcommand{\vfield}{\mathscr{K}}
\newcommand{\rcfield}{\vfield}
\newcommand{\dgraph}{\vec{\mathcal{G}}}
\newcommand{\vertices}{V}
\newcommand{\vertex}{v}
\newcommand{\edges}{E}
\newcommand{\edge}{e}
\newcommand{\inedge}{\mathrm{In}}
\newcommand{\outedge}{\mathrm{Out}}
\newcommand{\Max}{\mathrm{Max}}
\newcommand{\Min}{\mathrm{Min}}
\newcommand{\Rand}{\mathrm{Rand}}
\newcommand{\Maxvertices}{\vertices_{\Max}}
\newcommand{\Minvertices}{\vertices_{\Min}}
\newcommand{\Randvertices}{\vertices_{\Rand}}
\newcommand{\vertexII}{w}
\newcommand{\vertexIII}{u}
\newcommand{\edgeII}{e'}
\newcommand{\dunion}{\uplus}
\newcommand{\payoff}{r}
\newcommand{\prob}{p}
\newcommand{\probII}{q}
\newcommand{\shapley}{F}
\newcommand{\trshapley}{\shapley'}
\newcommand{\trdgraph}{\dgraph'}
\newcommand{\stbase}{\epsilon}
\newcommand{\smallvar}{\rho}
\newcommand{\tconv}{\mathrm{tconv}}
\newcommand{\transpose}{\intercal}
\title{The tropical analogue of the Helton--Nie conjecture is true}
\date{\today}
\thanks{The three authors were partially supported by the ANR projects CAFEIN (ANR-12-INSE-0007) and MALTHY (ANR-13-INSE-0003), by the PGMO program of EDF and Fondation Math\'ematique Jacques Hadamard, and by
the ``Investissement d'avenir'', r{\'e}f{\'e}rence ANR-11-LABX-0056-LMH,
LabEx LMH. M.~Skomra is  supported by a grant from R{\'e}gion Ile-de-France.}
\author{Xavier {A}llamigeon}
\author{St{\'e}phane {G}aubert}
\author{Mateusz Skomra}
\address{INRIA and CMAP, \'Ecole Polytechnique, CNRS, Universit{\'e} Paris--Saclay, 91128 Palaiseau Cedex France}
\email{firstname.lastname@inria.fr}
\keywords{Convex algebraic geometry, Spectrahedra, Nonarchimedean fields, Tropical geometry, Semidefinite programming}
\subjclass[2010]{90C22, 14P10, 12J25, 14T05}
\begin{document}
\begin{abstract}
Helton and Nie conjectured that
every convex semialgebraic set over the field of real numbers
can be written as the projection of a spectrahedron. 
Recently, Scheiderer disproved this conjecture~ \cite{scheiderer_helton_nie}. We show,
however, that the following result, which may be thought of
as a tropical analogue of this conjecture, is true:
over a real closed nonarchimedean field of Puiseux series,
the convex semialgebraic sets
and the projections of spectrahedra have precisely the same
images by 
the nonarchimedean valuation. 
The proof relies on
game theory methods.
\end{abstract}

\maketitle

\section{Introduction}
Convex semialgebraic sets appear in various guises in computational optimization~\cite{siam_parrilo}. They include spectrahedra, i.e., feasible sets of semidefinite programs (SDPs).
A long-standing problem is to characterize the convex semialgebraic sets that are SDP representable, meaning that they can be represented as the image
of a spectrahedron by a (linear) projector. 
The notion of SDP representability originates from the monograph of Nesterov and Nemirovski~\cite{nesterovnemirovskibook}. Nemirovski asked whether every convex semialgebraic set is SDP representable~\cite{nemirovski_icm}. Helton and Nie
conjectured that the answer is positive.
\begin{conjecture}[{\cite{helton_nie}}]
\label{conj:helton_nie}
Every convex semialgebraic set in $\R^{n}$ is a projection of a spectrahedron.
\end{conjecture}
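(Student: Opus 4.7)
The plan is to construct, for each convex semialgebraic set $K \subseteq \R^n$, an explicit semidefinite lift, using a moment-based hierarchy. I would first reduce to the case where $K$ is a basic closed convex semialgebraic set, presented by finitely many polynomial inequalities $p_1(x) \ge 0, \ldots, p_k(x) \ge 0$; the general case would follow by writing an arbitrary convex semialgebraic set as $\conv$ of a finite union of such basic pieces and invoking the standard SDP representation of convex hulls of SDP representable sets, together with a closure argument to handle non-closed $K$.

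The main tool is the Lasserre/theta-body hierarchy. For each integer $d$, let $\Theta_d(K)$ denote the projection onto the $x$-coordinates of the spectrahedron consisting of degree-$d$ truncated moment sequences whose moment matrix is positive semidefinite and whose localizing matrices associated with the $p_i$ are positive semidefinite. Each $\Theta_d(K)$ is SDP representable and contains $K$, so the task reduces to proving $\Theta_d(K) = K$ for some finite $d$. The key steps, in order, would be: (i) intersect with a large ball to place oneself in the archimedean regime and apply Putinar's Positivstellensatz; (ii) show that every linear functional strictly positive on $K$ admits a sums-of-squares certificate of \emph{uniformly bounded} degree $d_0$ depending only on the presentation; (iii) dualise to obtain $\Theta_{d_0}(K) = K$; (iv) remove the auxiliary ball and, if necessary, pass to closures and convex hulls to recover the original $K$.

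The decisive obstacle, and what I expect to be the hard step, is the uniformity required in (ii). Helton and Nie established the conjecture under a strictly positive curvature assumption on $\partial K$ precisely because this regime supplies quantitative Positivstellensatz bounds via implicit-function arguments. For a general convex semialgebraic $K$, a supporting hyperplane can touch $\partial K$ along a positive-dimensional flat face cut out by polynomials whose gradients vanish there, and then no uniform bound on the degree of a Putinar certificate is available; indeed, the whole difficulty is packed into this phenomenon. Overcoming it would demand either a new effective Positivstellensatz for linear forms on arbitrary convex sets, or a genuinely non-moment-based construction of the lift (for instance via explicit determinantal representations of the Zariski closure of $\partial K$ combined with a face-by-face gluing). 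It is precisely at this juncture that a new idea is needed, and one should be prepared for the possibility that \emph{no} such uniform bound exists---in which case the conjecture as stated would fail for some pathological $K$, and only a weakened, projective or nonarchimedean variant would survive.
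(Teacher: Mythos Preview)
The statement you are attempting to prove is \emph{false}, and the paper does not prove it. Conjecture~\ref{conj:helton_nie} is quoted in the paper only as historical context; immediately afterwards the paper records Scheiderer's theorem that the cone of positive semidefinite forms of degree $2d$ in $n$ variables is \emph{not} a projected spectrahedron except in the classical Hilbert cases $2d=2$, $n\le 2$, or $(n,2d)=(3,4)$. Consequently no proof strategy for the conjecture over $\R$ can succeed, and the paper's actual contribution is the tropical analogue (Theorem~\ref{th-main}), proved by completely different, game-theoretic means.

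Your proposal identifies the obstruction correctly and then stops short of drawing the conclusion. The ``decisive obstacle'' you flag in step~(ii)---a uniform degree bound for Putinar certificates of all linear functionals nonnegative on $K$---is not merely hard; it provably fails. Scheiderer's argument shows that for the cone of nonnegative forms there is no finite $d$ with $\Theta_d(K)=K$, and more strongly that $K$ admits no SDP lift at all, so neither moment relaxations nor any alternative construction (determinantal representations, face-by-face gluing, etc.) can rescue the plan. Your closing remark that ``one should be prepared for the possibility that no such uniform bound exists'' is exactly the state of affairs: the conjecture fails over $\R$, and what survives is the nonarchimedean/valuation-level statement that the paper establishes.
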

Several classes of convex semialgebraic sets for which the answer is positive have been identified~\cite{helton_nie,heltonvinnikov,heltonnieMathProg,lasserre,gouveiaparrilothomas,gouveianetzer,eggcurve}.
In particular, it is known
that the conjecture is true in dimension 2~\cite{scheidererdim2}. The conjecture has been recently disproved by Scheidered, who showed that the cone of positive semidefinite forms cannot be expressed as a projection of spectrahedra, except in some particular cases~\cite{scheiderer_helton_nie}. A comprehensive list of references can be found in this work.

\begin{theorem}[\cite{scheiderer_helton_nie}]
The cone of positive semidefinite forms of degree $2d$ in $n$ variables can be expressed as a projection of a spectrahedron only when $2d = 2$ or $n \leq 2$ or $(n, 2d) = (3,4)$.
\end{theorem}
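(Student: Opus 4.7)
The plan is to split the statement into the easy positive direction, establishing SDP representability in the three listed cases, and the substantially deeper negative direction, proving non-representability elsewhere.

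\textbf{Positive direction.} The three cases $2d = 2$, $n \le 2$, and $(n, 2d) = (3, 4)$ are exactly those for which Hilbert's classical 1888 theorem asserts that every nonnegative form of degree $2d$ in $n$ variables is a sum of squares; equivalently, $P_{n,2d} = \Sigma_{n,2d}$ holds in precisely these cases. Since the SOS cone is represented as $\{f : f = v_d^{\transpose} G v_d \text{ for some } G \succeq 0\}$, where $v_d$ is the vector of monomials of degree $d$, it is immediately the linear image of an affine slice of a PSD matrix cone, hence a projection of a spectrahedron. So in the three listed cases the conclusion is routine.

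\textbf{Negative direction.} The hard part is to show that whenever $(n, 2d)$ is not on Hilbert's list, $P_{n,2d}$ is not the projection of any spectrahedron. My plan is to formulate a necessary condition on every projected spectrahedron and exhibit an obstruction for $P_{n,2d}$. If $K = \pi(S) \subseteq \R^N$ with $S$ a spectrahedron, then the face structure of $K$ is tightly controlled: every exposed face of $K$ is (the projection of) an exposed face of $S$, which in turn is cut out by the vanishing of a specified principal minor family, i.e., by kernel conditions on a PSD matrix depending affinely on parameters. This forces algebraic conditions on the \emph{exposed faces} of $K$, and in particular on the Zariski closures of support sets of extreme rays of the dual cone $K^\ast$. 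For $K = P_{n,2d}$, extreme rays of the dual are evaluation functionals $f \mapsto f(x)$ at points $x$, and exposed faces are indexed by prescribed real zero sets of nonnegative forms. The strategy is therefore to find, in each non-exceptional $(n,2d)$, a nonnegative form $f$ whose real zero locus is rich enough that no projected-spectrahedral lift could reproduce the corresponding exposed face.

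\textbf{Main obstacle.} The principal difficulty is to construct, in each of the infinitely many non-exceptional pairs $(n,2d)$, an explicit witness proving failure. I would first reduce to the two minimal non-exceptional cases $(n,2d) = (3,6)$ and $(n,2d) = (4,4)$ by an inductive dehomogenisation/restriction argument showing that SDP representability of $P_{n,2d}$ for $n \ge 4$, $2d \ge 4$ or $n = 3$, $2d \ge 6$ propagates down to one of these two minimal cases. The remaining core task, on which the entire theorem rests, is then to exhibit in each minimal case a ternary sextic (resp.\ quaternary quartic) together with a separating linear functional that cannot be realised by any spectrahedral lift; this is the genuinely new construction of Scheiderer, which I would attempt to reproduce by analysing the Veronese embedding together with degree bounds on positivity certificates.
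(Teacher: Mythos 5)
The paper does not prove this statement: it is Scheiderer's theorem, cited verbatim from \cite{scheiderer_helton_nie} and used as a black box (the only thing the paper proves in this vicinity is the model-theoretic transfer to real closed valued fields). There is therefore no in-paper argument to compare against, and your proposal must be assessed on its own terms.

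On those terms, the positive direction is correct and routine: Hilbert's 1888 classification identifies the three listed cases as exactly those where $P_{n,2d}=\Sigma_{n,2d}$, and the SOS cone is a spectrahedral shadow via the Gram matrix lift. Your reduction to the two minimal non-exceptional pairs $(3,6)$ and $(4,4)$ is also sound: setting $x_n=0$ realises $P_{n-1,2d}$ as a linear section of $P_{n,2d}$, and multiplication by $\sum_i x_i^2$ realises $P_{n,2d-2}$ as a linear section of $P_{n,2d}$; since linear sections and linear images of spectrahedral shadows are again spectrahedral shadows, non-representability propagates downward to the minimal pairs.

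The genuine gap is the negative direction at the base cases, which you explicitly defer as ``the genuinely new construction of Scheiderer''---but this \emph{is} the theorem, not an afterthought. Moreover, the heuristic you offer (exposed faces of $K=\pi(S)$ lift to exposed faces of $S$, which are kernel loci of degree bounded by the matrix size) is true but insufficient as stated: projections of algebraically simple faces can be very complicated semialgebraic sets, so no degree bound transfers immediately from $S$ to $K$. What Scheiderer actually establishes is a quantitative structure theorem for projected spectrahedral cones---roughly, that the (Zariski closures of the) faces of the dual cone of a shadow are all captured by a single finite-dimensional space of polynomials determined by the lift---and he then exhibits in the base cases a family of nonnegative forms whose associated dual faces cannot all fit into any such finite-dimensional family. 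The obstruction is not a single ``separating linear functional that cannot be realised,'' but this infinite algebraic richness. Without stating and proving that structure theorem and constructing the explicit witness family, what you have is a correct reduction together with a pointer to the missing core.
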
  

The notion of convex and semialgebraic sets make sense over any real closed field, in particular over the nonarchimedean field $\puiseux$ of real Puiseux series, equipped with the total order induced by its nonnegative cone $\nnpuiseux$, consisting of series with a nonnegative leading coefficient.
Our main result shows that the next statement, which may be thought of as
a ``Helton--Nie conjecture for valuations,'' is valid.

\begin{theorem}\label{th-main}
The image by the valuation of every convex semialgebraic subset of $\puiseux^n$
coincides with the image by the valuation of a projected spectrahedron over $\puiseux$.
\end{theorem}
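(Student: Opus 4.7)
The plan is to establish the theorem by proving both inclusions of valuation images. One direction is immediate: any projected spectrahedron over $\puiseux$ is itself a convex semialgebraic set, so its valuation image is automatically realized by a convex semialgebraic set. All the content lies in the converse: given a convex semialgebraic set $S \subset \puiseux^n$, construct a projected spectrahedron $T$ such that $\val(T) = \val(S)$.

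The first step would be to give a tropical characterization of $\val(S)$. Since $\puiseux$ is a real closed valued field, the image of a semialgebraic set under the valuation is a semilinear subset of $\Rinfty^n$, and convexity of $S$ further constrains its shape. I would describe $\val(S)$ by a finite Boolean combination of tropical polynomial inequalities, using the symmetrized and balanced tropical semirings $\strop$ and $\baltrop$ introduced in the paper to keep track of the sign information that is lost by the valuation alone.

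The second step is to understand $\val(T)$ for $T$ a projected spectrahedron. Positive semidefiniteness of $A_0 + \sum x_i A_i$ is equivalent, once signs are retained, to a combinatorial condition on the leading exponents and signs of its entries; through the authors' earlier work this condition is naturally expressed as the nonpositivity of the value of a two-player zero-sum stochastic mean-payoff game, by way of a Shapley operator. Projection corresponds to existentially quantifying some coordinates of the value vector, which is modelled on the game side by enlarging the arena.

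The core of the argument would then be to show that every tropical convex semialgebraic set arises as the value set of such a game (with projection). I would argue by induction on the complexity of the defining formula: atomic tropical inequalities are realized by elementary gadgets combining $\Max$, $\Min$, and $\Rand$ vertices, conjunctions are realized by a $\Min$ vertex choosing between subgames, and disjunctions by a $\Max$ vertex doing so. Given such a game realizing $\val(S)$, I would finally lift it to a symmetric matrix pencil over $\puiseux$ by picking Puiseux coefficients with prescribed valuations and signs, obtaining the sought spectrahedron. The main difficulty will be precisely this lifting: the valuation forgets signs while semidefiniteness is intrinsically sign-sensitive, so one must encode signs in the game and verify that the lifted pencil is genuinely positive semidefinite (not merely tropically so) over the full set of $x$ with $\val(x) \in \val(S)$, which forces careful control of the leading coefficients across all the combinatorial cases discriminated by the game.
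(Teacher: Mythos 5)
Your overall plan matches the paper's in outline: one direction is trivial, and the substance is realizing $\val(\bsalg)$ as $\val$ of a projected spectrahedron via a tropical and game-theoretic description. You correctly name the key players (Shapley operators, Min/Max/Random vertices, the authors' prior combinatorial description of tropical spectrahedra). But your core realization step diverges from the paper's and, as sketched, has a gap. The paper does \emph{not} argue by induction on the defining formula of the semilinear set. Its crucial structural lemma is that every closed semilinear real tropical cone $\slin$ is \emph{exactly} the subfixed-point set $\{x : x \le \shapley(x)\}$ of a semilinear monotone homogeneous operator $\shapley$, constructed directly by $\shapley_k(x) = \max\{y_k : y \in \slin,\ y \le x\}$; Ovchinnikov's minimax theorem (plus the observation that the pieces of $\shapley$ are affine with rational \emph{stochastic} matrices) then yields the $\min\max$ form that encodes as a game. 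Your proposed induction on formula structure stumbles at disjunctions: a union of tropically convex sets is not tropically convex, so the intermediate sets of a DNF decomposition need not be (projected) tropical spectrahedra, and a Max vertex choosing between subgames produces the tropical convex hull of the union rather than the union. That is acceptable only at the very top level, because the full set is convex, but the internal Boolean structure of a generic semilinear formula does not respect tropical convexity, so the induction does not close.

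Two further ingredients you omit are essential. The probabilities in the stochastic matrices of $\shapley$ are arbitrary rationals, while a tropical Metzler spectrahedron produces only $(1/2,1/2)$ branching at Random vertices (from $Q_{ii}(x) \tdot Q_{jj}(x) \ge Q_{ij}(x)^{\tdot 2}$); the paper handles this with the Zwick--Paterson gadget and then two additional graph transformations to enforce the required Min--Random--Max--Min alternation. Also, coordinates at $-\infty$ require a separate stratification-plus-homogenization argument, since $\val(\bsalg)$ lives in $\trop^n$, not $\R^n$. Finally, your concern about whether a tropical Metzler spectrahedron genuinely lifts to a positive-semidefinite pencil over $\puiseux$ is legitimate in principle but is not a new difficulty here: the paper invokes it as a known result from the authors' earlier work (\cref{metzler_is_spectra}), so no fresh leading-coefficient analysis is required in this proof.
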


Our approach relies on tropical methods. Tropical semialgebraic sets can be defined as the images by the nonarchimedean valuation of semialgebraic sets over $\puiseux$. The quantifier elimination
techniques in real valued fields developed by Pas~\cite{pas_cell_decomposition},
building on work of Denef~\cite{denef_p-adic_semialgebraic}, imply that tropical
semialgebraic sets are semilinear. Moreover, the image by the nonarchimedean valuation of a convex set over $\puiseux$ is a tropical convex set, i.e., a set stable by taking tropical 
convex combinations. In a previous work~\cite{tropical_spectrahedra} we studied tropical spectrahedra, defined as the images by the nonarchimedean valuation of spectrahedra over $\puiseux$, and gave a combinatorial characterization of generic tropical spectrahedra. 
 
The proof relies on the recently developed relations between tropical convex programming
and zero-sum games~\cite{polyhedra_equiv_mean_payoff,issac2016jsc}.
In particular, in the latter reference, we demonstrated a class of generic tropical spectrahedra that corresponds precisely to the sets of subharmonic vectors (subfixed points) of a class of nonlinear Markov operators (Shapley operators of stochastic mean payoff games). In that way, one obtains an explicit construction for these tropical spectrahedra.

The tropical perspective proved to be useful to find counterexamples to classical conjectures in real algebraic geometry. For instance, Itenberg and Viro~\cite{itenberg_viro} disproved the Ragsdale conjecture as an application of the tropical patchworking method.
More recently, Allamigeon et al.~\cite{log_barrier} contradicted,
by a tropical method, the continuous analogue of the Hirsch conjecture
proposed by Deza, Terlaky, and Zinchenko~\cite{deza_terlaky_zinchenko}.
The validity of the tropical analogue of the Helton--Nie conjecture raises the question whether a counterexample could be found by a tropical approach, for instance, by studying images of convex semialgebraic sets and spectrahedra through a map carrying more information than the valuation.

We finally note that semilinear sets that are tropically convex have been studied recently by Bodirsky and Mamino from a different perspective, motivated by a class of satisfiability problems~ \cite{bodirsky_mamino}. They showed in particular that feasibility and infeasibility certificates for these problems can be obtained from stochastic games. The tropical convex sets they consider differ from ours in two respects: the $\zero$ element is not allowed in their approach, whereas it appears as the image of the zero element by the nonarchimedean valuation; moreover, the tropicalizations of convex semialgebraic sets are always closed, and so, definable by weak inequalities, whereas systems including both strict and weak inequalities are considered in~\cite{bodirsky_mamino}.

\section{Preliminaries}

\subsection{Puiseux series and tropical algebra}

A \emph{(generalized formal real) Puiseux series} is a formal series of the form
\begin{equation}
\bo x = \sum_{i = 1}^{\infty} c_{\lambda_{i}}t^{\lambda_{i}} \, , \label{eq:series}
\end{equation}
where $t$ is a formal parameter, $(\lambda_{i})_{i \ge 1}$ is a strictly decreasing sequence of real numbers that is either finite or unbounded, and $c_{\lambda_{i}} \in \R \setminus \{ 0\}$ for all $\lambda_{i}$. There is also a special, empty series, which is denoted by $0$. The Puiseux series can be added and multiplied in the natural way. They form a real closed field~\cite{markwig}, which we denote here by $\puiseux$. If $\bo x$ is a Puiseux series as in~\cref{eq:series}, then by $\lc(\bo x)$ we denote its \emph{leading coefficient}, $\lc(\bo x) = c_{1}$ (with the convention that $\lc(0) = 0$). The (unique) total order on $\puiseux$ is given by the relation $\bo x \ge \bo y \iff \lc(\bo x - \bo y) \ge 0$.

The field of Puiseux series is equipped with a \emph{(nonarchimedean) valuation} $\val \colon \puiseux \to \R \cup \{\zero\}$. If $\bo x \in \puiseux$ is as in~\cref{eq:series}, then we define $\val(\bo x)$ as the leading exponent of $\bo x$, $\val(\bo x) = \lambda_{1}$ (with the convention that $\val(0) = \zero$). 
It follows from the definition that we have the relations
\begin{align}
\val(\bo x + \bo y) &\leq \max( \val (\bo x ), \val (\bo y))\label{e-val}\\
\val(\bo x  \bo y) &= \val (\bo x ) + \val (\bo y)\label{e-val2}\\
0 \le \bo x \le \bo y &\implies \val(\bo x) \le \val(\bo y)\label{e-val3}
\end{align}
Furthermore, the inequality in~\cref{e-val} becomes an equality when the leading terms of $\bo x$ and $\bo y$ do not cancel, which is the case if $\val (\bo x ) \neq \val (\bo y)$ or if
$\bo x, \bo y\geq 0$. We denote by $\nnpuiseux$ the set of nonnegative Puiseux series (the series which fulfill the inequality $\bo x \ge 0$). 

\begin{remark}
We chose the specific field $\puiseux$ for simplicity of exposition. Actually, a quantifier elimination argument allows one to deduce that our main results stated over $\puiseux$ are also valid over other real closed nonarchimedean fields, see \cref{rk-anyvaluedfield}.
\end{remark}

\subsection{Tropical semifield}

The \emph{tropical semifield} $\trop$ describes the algebraic structure of $\puiseux$ under the valuation map. The underlying set of $\trop$ is defined as $\trop \coloneqq \R \cup \{\zero\}$, the tropical addition is defined as $x \tplus y = \max(x,y)$, and the tropical multiplication is defined as $x \tdot y = x + y$. The structure $\trop$ constitutes only a semifield, for the tropical addition does not have an inverse operation. 
We use the notation $\tsum_{i = 1}^{n}a_{i} = a_{1} \tplus \cdots \tplus a_{n}$ and $a^{\tdot n} = a \tdot \cdots \tdot a$ ($n$ times). We also endow $\trop$ with the standard order $\ge$. The properties \cref{e-val,e-val2,e-val3} imply that $\val$ is an order-preserving morphism of semifields from $\nnpuiseux$ to $\trop$. 

When dealing with semialgebraic sets, it is convenient to keep track not only of the valuations of the elements of $\puiseux$, but also of their signs. To this end, we introduce the sign function $\sign \colon \puiseux \to \{-1,0,+1\}$ defined as $\sign(\bo x) = 1$ if $\bo x > 0$, $\sign(\bo x) = -1$ if $\bo x < 0$, and $\sign(0) = 0$. The set of \emph{signed tropical numbers} is then defined as $\strop \coloneqq (\{+1,-1 \} \times \R) \cup \{(0,\zero) \}$, and the \emph{signed valuation} is defined as $\sval \colon \puiseux \to \strop$, $\sval(\bo x) = (\sign(\bo x), \val(\bo x))$. The \emph{modulus} function $\abs{\cdot} \colon \strop \to \trop$ is defined as the projection which forgets the first coordinate. The \emph{sign} function $\sign \colon \strop \to \{-1,0,1\}$ is defined as the projection which forgets the second coordinate. The elements of the form $(1,a)$ of $\strop$ are called \emph{positive tropical numbers} and are denoted by $\postrop$. Similarly, the elements of the form $(-1,a)$ of $\strop$ are called \emph{negative tropical numbers} and are denoted by $\negtrop$. By convention, we denote the positive tropical number $(1,a)$ by $a$, the negative tropical number $(-1,a)$ by $\tminus a$, and the element $(0,\zero)$ by $\zero$. Here, $\tminus$ is a formal symbol. Note that we can extend the definition of tropical multiplication to $\strop$ using the usual rules for signs (e.g., we have $(\tminus 3)  \tdot 7 = \tminus 10$ and $(\tminus 3)  \tdot (\tminus 7) = 10$). However, we only partially extend the tropical addition to the elements of $\strop$ which have the same sign (e.g., we have $3 \tplus 7 = 7$ and $(\tminus 3)  \tplus (\tminus 7) = \tminus 7$, but $(\tminus 3)  \tplus 7$ is not defined). 
One can extend the set $\strop$ even further
to get a semiring with a well-defined tropical addition~\cite{guterman}, or work with hyperfields~\cite{virohyperfields,connesconsani} instead of semifields, 
but we do not need that here. Furthermore, note that the tropical semiring $\trop$ is isomorphic to $\postrop \cup \{ \zero\}$.

A \emph{tropical (signed) polynomial} over the variables $X_1, \dots, X_n$ is a formal expression of the form
\begin{align}
P(X) = \tsum_{\alpha \in \psupport} a_{\alpha} \tdot X_{1}^{\tdot \alpha_{1}} \tdot  \cdots \tdot X_{n}^{\tdot \alpha_{n}} \, ,\label{e-def-P}
\end{align}
where $\psupport \subset \{0, 1, 2, \dots \}^{n}$, and $a_{\alpha} \in \strop \setminus \{\zero\}$ for all $\alpha \in \psupport$. 
If $P$ is given as in~\cref{e-def-P}, we define $P^+$ (resp.\ $P^-$) as the tropical polynomial generated by the terms $\abs{a_\alpha} \tdot X_{1}^{\tdot \alpha_{1}} \tdot  \dots \tdot X_{n}^{\tdot \alpha_{n}}$ where $a_\alpha \in \postrop$ (resp.\ $\negtrop$). Note that the quantities $P^+(x)$ and $P^-(x)$ are well defined for all $x \in \trop^n$, because the tropical polynomials $P^+$ and $P^-$ have only tropically positive coefficients.

We extend the functions $\val$ and $\sval$ to vectors and matrices by applying them coordinatewise. 

\subsection{Tropical convexity}
\label{sec:tropical_convexity}

In this section, we recall some basic facts about convexity in the usual and tropical sense. A set $\bo X \subset \puiseux^{n}$ is called \emph{convex} if for every $\bo x, \bo y \in \bo X$ and every $\bo \lambda \in \puiseux$ such that $0 \le \bo \lambda \le 1$ we have $\bo \lambda + (1 - \bo \lambda)\bo y \in \bo X$. Since the intersection of any number of convex sets in convex, for every set $\bo X \subset \puiseux^{n}$ we can define its \emph{convex hull} (denoted $\conv(\bo X)$) as the smallest (inclusionwise) convex set that contains $\bo X$. This set is characterized by Carath{\'e}odory's theorem.

\begin{theorem}\label{caratheodory}
If $\bo X \subset \puiseux^{n}$, then we have the equality
\[
\conv(\bo X) = \Bigl\{\sum_{k = 1}^{n+1} \bo \lambda_{k} \bo x_{k} \in \puiseux^{n} \colon \forall k, \, \bo x_{k} \in \bo X \, \land \, \forall k, \, \bo \lambda \ge 0 \, \land \, \sum_{k = 1}^{n+1} \bo \lambda_{k} = 1 \Bigr\} \, .
\]
\end{theorem}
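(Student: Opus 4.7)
The plan is to establish the two inclusions separately. Denote by $C$ the set on the right-hand side. The inclusion $C \subset \conv(\bo X)$ is immediate: any convex set containing $\bo X$ must contain all finite convex combinations of its elements (by induction on the number of summands, using the defining property of convexity once a positive normalization is factored out). Hence $C \subset \conv(\bo X)$.

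For the reverse inclusion, I would show that $C$ is itself a convex set containing $\bo X$. Containment of $\bo X$ is trivial (take a single summand with coefficient $1$). By the minimality of $\conv(\bo X)$ among convex sets containing $\bo X$, this yields $\conv(\bo X) \subset C$. To show $C$ is convex, given $\bo x, \bo y \in C$ and $\bo \theta \in \puiseux$ with $0 \le \bo \theta \le 1$, the combination $\bo \theta \bo x + (1 - \bo \theta) \bo y$ is \emph{a priori} a convex combination of at most $2(n+1)$ points of $\bo X$. The task then reduces to showing that any convex combination of $m > n+1$ points of $\bo X$ can be rewritten as a convex combination of at most $m - 1$ of them; iterating yields $C$.

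The reduction is the heart of the argument, and it is the standard one, transported from $\R$ to the ordered field $\puiseux$. Suppose $\bo z = \sum_{i=1}^{m} \bo \lambda_i \bo z_i$ with $\bo z_i \in \bo X$, $\bo \lambda_i \ge 0$, $\sum_i \bo \lambda_i = 1$, and $m > n + 1$. The $m$ vectors $(\bo z_i, 1) \in \puiseux^{n+1}$ are linearly dependent over the field $\puiseux$, so there exist $\bo \mu_1, \dots, \bo \mu_m \in \puiseux$, not all zero, with $\sum_i \bo \mu_i \bo z_i = 0$ and $\sum_i \bo \mu_i = 0$. Since the $\bo \mu_i$ sum to zero and are not all zero, at least one is strictly positive (otherwise change every sign). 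Set
\[
\bo t \coloneqq \min\{\bo \lambda_i / \bo \mu_i : \bo \mu_i > 0\},
\]
which exists as the minimum of a finite nonempty subset of the totally ordered field $\puiseux$, and let $i_0$ be an index attaining it. Define $\bo \lambda'_i \coloneqq \bo \lambda_i - \bo t \bo \mu_i$. A direct check shows that $\bo \lambda'_i \ge 0$ for all $i$, $\sum_i \bo \lambda'_i = 1$, $\sum_i \bo \lambda'_i \bo z_i = \bo z$, and $\bo \lambda'_{i_0} = 0$. Thus $\bo z$ is expressed using only $m - 1$ of the original points.

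The main observation is that there is essentially no obstacle beyond verifying that the classical proof transfers, since every step relies only on the field structure, basic linear algebra, and the total order on $\puiseux$ --- all of which are available because $\puiseux$ is a real closed field. In particular, linear dependence of $m > n+1$ vectors in $\puiseux^{n+1}$ is an instance of the rank inequality over an arbitrary field, and the existence of the minimum $\bo t$ uses only that $\puiseux$ is totally ordered. No archimedean or completeness property is invoked.
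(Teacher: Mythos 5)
Your proof is correct: it is the standard Carathéodory reduction, and each step (existence of a nontrivial affine dependence among $m > n+1$ points, taking the minimum of the finitely many ratios $\bo\lambda_i/\bo\mu_i$ over the indices with $\bo\mu_i > 0$, and the sign/sum checks for the new weights $\bo\lambda'_i$) only uses the field operations and the total order, so it transfers verbatim from $\R$ to $\puiseux$. The paper does not write out a proof but simply cites Schrijver~\cite[Corollary~7.1j]{schrijver} as a reference valid over any ordered field, and the argument given there is essentially the one you reproduced, so you are taking the same approach.
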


We refer to~\cite[Corollary~7.1j]{schrijver} for a proof of \cref{caratheodory} that is valid over every ordered field. 

Let us now move to tropical convexity, referring the reader to~\cite{cgq02,tropical_convexity} for background. We say that a set $X \subset \trop^{n}$ is \emph{tropically convex} if for every $x, y \in X$ and every $\lambda, \mu \in \trop$ such that $\lambda \tplus \mu = 0$ the point $(\lambda \tdot x) \tplus (\mu \tdot y)$ belongs to $X$. The latter quantity corresponds to the tropical analogue of a convex combination of $x$ and $y$. Indeed, the scalars $\lambda$ and $\mu$ are implicitly ``nonnegative'' in the tropical sense, as they are greater than or equal to the tropical zero element $-\infty$. Besides, their tropical sum equals the tropical unit $0$.
The intersection of any number of tropically convex sets is tropically convex. Hence, for any set $X \subset \trop^{n}$ we can define its \emph{tropical convex hull} (denoted $\tconv(X)$) as the smallest (inclusionwise) tropically convex set that contains $X$. Alternatively, one may work with a \emph{tropical (convex) cone} $X$, defined by requiring $(\lambda \tdot x) \tplus (\mu \tdot y) \in X$
for all $\lambda,\mu\in \trop$. 
Tropical convex sets can be identified to cross sections of tropical
convex cones~\cite{cgq02}. 
Carath{\'e}odory's theorem is still true in the tropical setting:

\begin{theorem}[{\cite{helbig}, \cite{BriecHorvath04}, \cite{tropical_convexity}}]\label{tropical_caratheodory}
If $X \subset \trop^{n}$, then we have the equality
\[
\tconv(X) = \Bigl\{ \tsum_{k = 1}^{n+1}(\lambda_{k} \tdot x_{k}) \colon \forall k, \,  x_{k} \in X \, \land \,\tsum_{k = 1}^{n+1} \lambda_{k} = 0 \Bigr\} \, .
\]
\end{theorem}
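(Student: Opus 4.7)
The plan is to prove the two inclusions separately. The $\supseteq$ inclusion is straightforward: starting from the binary definition of tropical convexity, a routine induction on $m$ shows that any finite combination $\tsum_{k=1}^m (\lambda_k \tdot x_k)$ with $x_k \in X$ and $\tsum_{k=1}^m \lambda_k = 0$ lies in $\tconv(X)$. Specializing to $m = n+1$ gives one inclusion.

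For the harder $\subseteq$ direction, I would first argue that every $x \in \tconv(X)$ can be written as some finite tropical convex combination $x = \tsum_{k=1}^m (\lambda_k \tdot x_k)$ with $\tsum_k \lambda_k = 0$ (this follows from checking that the set of all such finite combinations is itself tropically convex and contains $X$). The goal is then to reduce $m$ down to $n+1$ by a support-reduction argument based on the coordinatewise max structure. Introduce the index sets
\[
K_0 = \argmax_{1 \le k \le m} \lambda_k, \qquad K_i = \argmax_{1 \le k \le m} (\lambda_k + x_{k,i}) \quad (1 \le i \le n),
\]
which record the indices achieving the value of $x$ in the normalization constraint and in each of the $n$ coordinates. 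If some index $k^\star \in \{1,\dots,m\}$ lies outside $\bigcup_{i=0}^{n} K_i$, then setting $\lambda_{k^\star} \coloneqq \zero$ leaves both $x$ and $\tsum_k \lambda_k$ unchanged, so the number of active generators decreases by one. Otherwise every index belongs to some $K_i$, and since there are only $n+1$ such sets while $m > n+1$, by a pigeonhole-type argument one can exhibit a small perturbation $\lambda_k \mapsto \lambda_k + \varepsilon t_k$, with step size $\varepsilon$ chosen maximal, that preserves the normalization $\tsum_k \lambda_k = 0$ and the value of $x$, while driving at least one of the $\lambda_k$ to $\zero$ (that is, to $-\infty$ in the usual scale). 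Iterating, we reach $m = n+1$.

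The main obstacle is in the perturbation step: one must design the direction $(t_k)$ so that (i) on each $K_i$ a compensation holds (some index with $t_k > 0$ and some with $t_k < 0$ both belong to $K_i$, so the max is preserved), (ii) the step size $\varepsilon$ eventually forces some coefficient to $\zero$, and (iii) the combination remains a convex combination throughout. This is essentially a linear-algebra exercise on the incidence matrix of the $K_i$'s (which has $n+1$ rows and $m > n+1$ columns, hence a nontrivial kernel vector), adapted to the tropical setting, and the $n+1$ threshold appears precisely because we have $n$ coordinate constraints plus one normalization constraint.

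An alternative route, which I would mention as a shortcut, is to pass to the homogenized tropical cone $\tcone(\{(x,0) : x \in X\}) \subset \trop^{n+1}$ and invoke the standard tropical Carathéodory theorem for cones in $\trop^{n+1}$ (which bounds the number of generators by $n+1$); the intersection with the hyperplane ``last coordinate $=0$'' recovers $\tconv(X)$. This reduces the statement to the already-established cone version, and the pigeonhole/perturbation argument above is really the content of that cone version.
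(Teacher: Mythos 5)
The paper does not prove this theorem: it is cited to Helbig, Briec--Horvath, and Develin--Sturmfels, so there is no in-house argument to compare against. Judged on its own, your plan is on the right track but it takes a considerable detour in the $\subseteq$ direction. Once you have $x = \tsum_{k=1}^m(\lambda_k \tdot x_k)$ and the index sets $K_0 = \argmax_k \lambda_k$, $K_i = \argmax_k(\lambda_k + x_{k,i})$ for $i\in[n]$, the reduction to $n+1$ terms is immediate with no perturbation and no linear algebra: simply pick one representative $k(i)\in K_i$ for each $i\in\{0,1,\dots,n\}$ (when $K_i\neq\emptyset$; if $K_i=\emptyset$ then $x_i=\zero$ and no index is needed for that coordinate) and set $\lambda_k:=\zero$ for every $k\notin\{k(0),\dots,k(n)\}$. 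Each coordinate maximum and the normalization $\tsum_k\lambda_k=0$ are still witnessed by the retained representatives, and at most $n+1$ indices survive; padding with redundant terms gives exactly $n+1$. This is the whole content of the tropical statement, and it is precisely what makes it \emph{easier} than classical Carath\'eodory, where you really do need a kernel vector of the constraint matrix because you cannot drop a support element without perturbing the others.

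Your ``pigeonhole plus perturbation'' step and the reference to a ``nontrivial kernel vector of the incidence matrix'' import the classical proof strategy into a setting where it is both unnecessary and somewhat misleading. If you do insist on a perturbation direction $t$, the constraint for the max in each coordinate $i$ to be preserved is $\max_{k\in K_i} t_k = 0$ (not a linear equation), so any admissible direction already has $t_k=0$ on a set of representatives and $t_k\le 0$ elsewhere; taking $\varepsilon\to+\infty$ just reproduces the direct ``keep the representatives'' argument. So the proposal can be made to work, but the essential insight — that maxima are witnessed by a single index, so you may freely discard the rest — is buried rather than exploited, and the allusion to a linear-algebraic kernel should be removed. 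The alternative homogenization route you mention is fine and is the standard way the cone version (with bound $n$, for the $n$ coordinate argmaxes) implies the convex version (bound $n+1$, one extra for the normalization constraint).
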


A relation between the convexity in $\puiseux$ and the tropical convexity is shown in the next two lemmas.

\begin{lemma}\label{valuation_of_convex_set}
If $\bo X \subset \puiseux^{n}$ is a convex set, then $\val(\bo X)$ is tropically convex.
\end{lemma}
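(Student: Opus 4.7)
The plan is to fix $x, y \in \val(\bo X)$ with preimages $\bo x, \bo y \in \bo X$, fix tropical scalars $\lambda, \mu \in \trop$ satisfying $\lambda \tplus \mu = 0$, and construct a Puiseux scalar $\bo \alpha \in [0,1]$ such that the classical convex combination $\bo z := \bo \alpha \bo x + (1 - \bo \alpha) \bo y$ has coordinatewise valuation equal to $z := (\lambda \tdot x) \tplus (\mu \tdot y)$. Convexity of $\bo X$ then places $\bo z$ in $\bo X$, so that $z = \val(\bo z)$ lies in $\val(\bo X)$, and tropical convexity follows.

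First I would dispose of the degenerate cases $\lambda = \zero$ or $\mu = \zero$ by taking $\bo \alpha = 0$ or $\bo \alpha = 1$. In the remaining case $\lambda, \mu \in \R$, the condition $\max(\lambda,\mu) = 0$ lets me assume by symmetry that $\lambda = 0$ and $\mu \le 0$. I would then choose $\bo \alpha = 1 - c\, t^{\mu}$ when $\mu < 0$, and $\bo \alpha = c$ when $\mu = 0$, for a positive real parameter $c \in (0,1)$ to be fixed at the end. Each such $\bo \alpha$ lies in $[0,1]$ and satisfies $\val(\bo \alpha) = 0$, $\val(1-\bo \alpha) = \mu$; hence by \cref{e-val2} the two summands $\bo \alpha \bo x_i$ and $(1-\bo \alpha)\bo y_i$ have respective valuations $x_i$ and $\mu + y_i$, whose maximum is exactly the $i$th coordinate of $z$.

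For each coordinate $i$, the case $x_i \neq \mu + y_i$ is immediate: the inequality in \cref{e-val} becomes an equality and gives $\val(\bo z_i) = \max(x_i, \mu + y_i) = z_i$. The only subtle case is when $x_i = \mu + y_i$, where the leading terms of the two summands can interfere; a short computation expands $\bo z_i$ as $\bo x_i + c t^{\mu}(\bo y_i - \bo x_i)$ (or the analogous affine form when $\mu = 0$) and shows that the leading coefficient of $\bo z_i$ is an affine function of $c$ whose slope is, up to sign, the nonzero leading coefficient of $\bo y_i$ (resp.\ of $\bo y_i - \bo x_i$). Such an affine function vanishes at at most one real value of $c$; since only finitely many coordinates contribute a constraint, I can pick $c \in (0,1)$ outside the resulting finite bad set, enforcing $\val(\bo z_i) = z_i$ simultaneously for all $i$.

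The only genuine difficulty is therefore the cancellation analysis on the coordinates where $x_i = \mu + y_i$, and it is neutralized by a generic choice of the parameter $c$. Once that is handled, the lemma reduces to routine bookkeeping with \cref{e-val,e-val2,e-val3} and with convexity of $\bo X$.
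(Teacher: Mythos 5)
Your proof is correct and follows essentially the same route as the paper's: reduce by symmetry to $\lambda = 0$, use the convex combination $\bo z = (1-ct^{\mu})\bo x + ct^{\mu}\bo y$ (or $(1-c)\bo x + c\bo y$ when $\mu = 0$), and choose $c$ generically so that no leading-term cancellation occurs in any coordinate. The only cosmetic slip is your identification of the slope in the $\mu=0$ case as $\lc(\bo y_i - \bo x_i)$ — it is actually $\lc(\bo y_i) - \lc(\bo x_i)$, which may be $0$ — but then the affine function in $c$ is a nonzero constant, so the ``at most one bad $c$'' conclusion still holds and the argument is unaffected.
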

\begin{proof}
Let $x, y \in \val(\bo X)$ and take any $\lambda, \mu \in \trop$ such that $\lambda \tplus \mu = 0$. Without loss of generality, suppose that $\lambda = 0$. Take any points $\bo x \in \bo X \cap \val^{-1}(x)$ and $\bo y \in \bo X \cap \val^{-1}(y)$. Let us look at two cases. If $\mu < 0$, 
then for any real positive constant $c$, we have $1-ct^{\mu}>0$, and
so the point $\bo z = (1 - ct^{\mu})\bo x + ct^{\mu}\bo y$ belongs to $\bo X$. 
We already noted that the equality holds in the inequality~\cref{e-val}
if the leading terms do not cancel. Hence, choosing 
$c$ such that 
$c \neq -\lc(\bo x_{k})/\lc(\bo y_{k})$
for all $k \in [n]$ satisfying $\bo y_{k} \neq 0$, we deduce that
$\val(\bo z) = (\lambda \tdot x) \tplus (\mu \tdot y)$. 
If $\mu  = 0$, then we take now a real constant $c \in (0,1)$ such that for all $k \in [n]$ satisfying $\bo y_{k} \neq 0$ we have $c/(1 - c) \neq -\lc(\bo x_{k})/\lc(\bo y_{k})$. Then, the point $\bo z = (1 - c)\bo x + c\bo y$ belongs to $\bo X$ and we deduce as above that $\val(\bo z) = (\lambda \tdot x) \tplus (\mu \tdot y)$.
\end{proof}

The next lemma shows that a tighter relation holds for sets included in the nonnegative orthant of $\puiseux$.

\begin{lemma}\label{valuation_of_convex_hull}
If $\bo X \subset \nnpuiseux^{n}$ is any set, then we have $\val(\conv(\bo X)) = \tconv(\val(\bo X))$.
\end{lemma}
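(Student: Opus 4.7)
The plan is to prove both inclusions separately, using the two Carathéodory theorems already stated (\cref{caratheodory} and \cref{tropical_caratheodory}) and exploiting nonnegativity to prevent any cancellation in valuations.

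For the inclusion $\val(\conv(\bo X)) \subset \tconv(\val(\bo X))$, I would start with an arbitrary point $\bo z \in \conv(\bo X)$. By classical Carathéodory, write $\bo z = \sum_{k=1}^{n+1} \bo\lambda_k \bo x_k$ with $\bo x_k \in \bo X \subset \nnpuiseux^n$, $\bo\lambda_k \in \nnpuiseux$, and $\sum_k \bo\lambda_k = 1$. Set $\lambda_k \coloneqq \val(\bo\lambda_k)$ and $x_k \coloneqq \val(\bo x_k) \in \val(\bo X)$. Since all $\bo\lambda_k \bo x_k$ are nonnegative, the remark after \cref{e-val3} (equality in the ultrametric inequality when no cancellation occurs) gives, coordinatewise, $\val(\bo z) = \tsum_k \val(\bo\lambda_k \bo x_k) = \tsum_k \lambda_k \tdot x_k$. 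The same reasoning applied to $\sum_k \bo\lambda_k = 1$ yields $\tsum_k \lambda_k = \val(1) = 0$, so $\val(\bo z)$ is a tropical convex combination of elements of $\val(\bo X)$.

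For the reverse inclusion, I would take a point $z \in \tconv(\val(\bo X))$, use \cref{tropical_caratheodory} to write $z = \tsum_{k=1}^{n+1} \lambda_k \tdot x_k$ with $\tsum_k \lambda_k = 0$ and $x_k \in \val(\bo X)$, and lift: for each $k$, pick $\bo x_k \in \bo X$ with $\val(\bo x_k) = x_k$. The main task is to build nonnegative Puiseux scalars $\bo\lambda_k$ with $\sum_k \bo\lambda_k = 1$ and $\val(\bo\lambda_k) = \lambda_k$ whenever $\lambda_k \ne \zero$. WLOG $\lambda_1 = 0$ and $\lambda_k \leq 0$ for $k \ge 2$. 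Pick small positive real constants $c_k$ for $k \geq 2$ (zero if $\lambda_k = \zero$) and set $\bo\lambda_k = c_k t^{\lambda_k}$ for $k \ge 2$ and $\bo\lambda_1 = 1 - \sum_{k \geq 2}\bo\lambda_k$. Taking the $c_k$ sufficiently small ensures $\bo\lambda_1 > 0$ and $\val(\bo\lambda_1) = 0$. Since everything in sight is nonnegative, no cancellation occurs, so $\val(\sum_k \bo\lambda_k \bo x_k) = \tsum_k \val(\bo\lambda_k) \tdot \val(\bo x_k) = \tsum_k \lambda_k \tdot x_k = z$, placing $z$ in $\val(\conv(\bo X))$.

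The only delicate step is verifying that the specific lift of the tropical coefficients is a legitimate classical convex combination while preserving valuations; this is where the hypothesis $\bo X \subset \nnpuiseux^n$ is essential, because it guarantees that the additive errors made when rounding tropical coefficients to concrete Puiseux series cannot propagate into cancellations that would lower the valuation of the sum. Unlike the proof of \cref{valuation_of_convex_set}, we do not even need to perturb the constants $c_k$ to avoid cancellation, since all summands are automatically nonnegative.
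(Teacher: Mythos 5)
Your proof is correct and follows essentially the same strategy as the paper: both directions go via the two Carathéodory theorems, with the $\subset$ direction relying on the absence of cancellation in the valuation of sums of nonnegative series. The only difference is cosmetic: for the lift in the $\supset$ direction, the paper chooses the normalized scalars $\bo\lambda_k = t^{\lambda_k}/\bigl(\sum_l t^{\lambda_l}\bigr)$, which makes the constraint $\sum_k \bo\lambda_k = 1$ automatic, whereas you reserve one coordinate to absorb the slack ($\bo\lambda_1 = 1 - \sum_{k\ge 2}\bo\lambda_k$) and pick the remaining coefficients small; both constructions are valid and yield the same conclusion.
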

\begin{proof}
We start by proving the inclusion $\subset$. Take a point $\bo y \in \conv(\bo X)$. By \cref{caratheodory}, there exist $\bo \lambda_{1}, \dots, \bo \lambda_{n+1} \ge 0$ and $\bo x_{1}, \dots, \bo x_{n+1} \in \bo X$ such that $\bo y = \bo \lambda_{1} \bo x_{1} + \dots + \bo \lambda_{n+1} \bo x_{n+1}$. Hence, by \cref{e-val,e-val2} (and using the fact that $\bo X \subset \nnpuiseux^{n}$) we have 
\[
\val(\bo y) = \bigl(\val(\bo \lambda_{1}) \tdot \val(\bo x_{1}) \bigr) \tplus \dots \tplus \bigl(\val(\bo \lambda_{n+1}) \tdot \val(\bo x_{n+1}) \bigr) \, .
\]
Furthermore, we have $\sum_{k = 1}^{n+1} \bo \lambda_{k} = 1$ and hence $\tsum_{k = 1}^{n+1} \val(\bo \lambda_{k}) = 0$. Therefore, $\val(\bo y) \in \tconv(X)$ by \cref{tropical_caratheodory}. Conversely, take any point $y \in \tconv(X)$. By \cref{tropical_caratheodory}, we can find $\lambda_{1}, \dots, \lambda_{n+1} \in \trop$, $\tsum_{k = 1}^{n+1} \lambda_{k} = 0$ and $x_{1}, \dots, x_{n+1} \in X$ such that $y = (\lambda_{1} \tdot x_{1}) \tplus \dots (\lambda_{n+1} \tdot x_{n+1})$. We define $\bo \lambda_k \coloneqq t^{\lambda_k} / (\sum_{l = 1}^n t^{\lambda_l})$. Observe that for all $k$, $\val(\bo \lambda_k) = \lambda_k$ because the term $\sum_{l = 1}^n t^{\lambda_l}$ has valuation $\tsum_{l = 1}^{n+1} \lambda_l = 0$. Moreover, we have $\bo \lambda_{k} \ge 0$ and $\sum_{k = 1}^{n+1} \bo \lambda_{k} = 1$. Hence, the point $\bo y = \bo \lambda_{1} \bo x_{1} + \dots + \bo \lambda_{n+1} \bo x_{n+1}$ belongs to $\conv(\bo X)$ and verifies $\val(\bo y) = y$.
\end{proof}

We also need the following lemma.

\begin{lemma}\label{convex_hull_of_union}
Suppose that sets $X, Y \subset \trop^{n}$ are tropically convex. Then we have the equality
\[
\tconv(X \cup Y) = \{(\lambda \tdot x) \tplus (\mu \tdot y) \in \trop^{n} \colon x \in X, y \in Y, \lambda \tplus \mu  = 0 \} \, .
\]
\end{lemma}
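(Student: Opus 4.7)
My plan is to handle the two inclusions separately, noting that the reverse inclusion $\supseteq$ is immediate: for any $x \in X$ and $y \in Y$, both points lie in $X \cup Y \subseteq \tconv(X \cup Y)$, so any tropical convex combination $(\lambda \tdot x) \tplus (\mu \tdot y)$ with $\lambda \tplus \mu = 0$ again lies in $\tconv(X \cup Y)$.

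For the forward inclusion, I would apply the tropical Carath\'eodory theorem (\cref{tropical_caratheodory}) to an arbitrary point $z \in \tconv(X \cup Y)$, writing $z = \tsum_{k=1}^{n+1} \alpha_k \tdot z_k$ with $z_k \in X \cup Y$ and $\tsum_{k=1}^{n+1} \alpha_k = 0$. The main idea is to regroup the sum according to which side each $z_k$ lies on: set $I \coloneqq \{k : z_k \in X\}$ and $J \coloneqq \{k : z_k \in Y\}$ (breaking ties arbitrarily when $z_k \in X \cap Y$), and define $\lambda \coloneqq \tsum_{k \in I} \alpha_k$ and $\mu \coloneqq \tsum_{k \in J} \alpha_k$, so that $\lambda \tplus \mu = 0$ follows from the Carath\'eodory expansion. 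When both $I$ and $J$ are nonempty, I normalize and put
\[
x \coloneqq \tsum_{k \in I} (\alpha_k - \lambda) \tdot z_k, \qquad y \coloneqq \tsum_{k \in J} (\alpha_k - \mu) \tdot z_k.
\]
The coefficients in each of these two sums tropically add up to $0$, so $x$ and $y$ are tropical convex combinations of points of $X$ and $Y$ respectively; tropical convexity of $X$ and $Y$ together with \cref{tropical_caratheodory} then give $x \in X$ and $y \in Y$. A direct rearrangement yields $(\lambda \tdot x) \tplus (\mu \tdot y) = z$, placing $z$ in the right-hand side.

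The main subtlety, which is more notational than substantive, is the degenerate case where $I$ or $J$ is empty so that $\lambda$ or $\mu$ equals $\zero$. For instance, if $J = \emptyset$, then $z$ is itself a tropical convex combination of points of $X$ and hence lies in $X$; assuming $Y$ nonempty (implicitly required for the right-hand side to be defined) one picks any $y \in Y$ and writes $z = (0 \tdot z) \tplus (\zero \tdot y)$, which fits the required form. The symmetric case $I = \emptyset$ is handled in the same way. I expect that the only care needed in the write-up is to treat these edge cases explicitly, since the rest of the argument is just bookkeeping with tropical scalars.
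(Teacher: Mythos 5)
Your proof is correct but takes a genuinely different, and more laborious, route than the paper's. The paper's proof of the forward inclusion is one sentence: it observes that the right-hand side contains $X$ (write $x = (0 \tdot x) \tplus (\zero \tdot y)$ for any fixed $y \in Y$) and $Y$, and is itself tropically convex; since $\tconv(X \cup Y)$ is by definition the smallest tropically convex set containing $X \cup Y$, the inclusion follows with no need for Carath\'eodory. Your argument instead invokes \cref{tropical_caratheodory} to get an explicit $(n+1)$-term representation and then regroups terms by whether they come from $X$ or from $Y$. Both arguments are valid, and yours is perhaps more hands-on, but the paper's is shorter and closer to the definitions; the cost is that one must check tropical convexity of the right-hand side, which is itself a small regrouping computation of the same flavor as yours.

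One small imprecision in your write-up: you identify the degenerate case as ``$I$ or $J$ is empty so that $\lambda$ or $\mu$ equals $\zero$,'' but tropical Carath\'eodory permits coefficients $\alpha_k = \zero$, so $\lambda = \tsum_{k \in I}\alpha_k$ can equal $\zero$ even when $I \neq \emptyset$, in which case the normalization $\alpha_k - \lambda$ is not defined. The simple fix is to first discard all indices $k$ with $\alpha_k = \zero$ (they contribute nothing to $z$), after which $\lambda = \zero$ if and only if $I = \emptyset$, and your case analysis goes through as written.
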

\begin{proof}
The inclusion $\supset$ follows immediately from the definition of tropical convex hull. The other inclusion holds because the set on the right-hand side contains $X$ and $Y$ and is tropically convex.
\end{proof}

\subsection{Tropicalization of convex semialgebraic sets}\label{sec:tropicalization_of_convex}

A set $\bsalg \subset \puiseux^{n}$ is called \emph{basic semialgebraic} if it is of the form
\begin{equation}
\{ \bo x \in \puiseux^{n} \colon \forall i = 1,\dots,p, \, \bo P_{i}(\bo x) > 0 \land \forall i = p+1,\dots,q, \, \bo P_{i}(\bo x) = 0 \} \, , \label{eq:basic}
\end{equation}
where $\bo P_{i} \in \puiseux[X_{1}, \dots, X_{n}]$ are polynomials. A set $\bsalg \subset \puiseux^{n}$ is called \emph{semialgebraic} if it is a finite union of basic semialgebraic sets. In this section, we characterize the sets that arise as images by valuation of convex semialgebraic sets.

\begin{lemma}\label{convex_hull_salg}
If $\bsalg \subset \puiseux^{n}$ is a semialgebraic set, then $\conv(\bsalg)$ is also semialgebraic.
\end{lemma}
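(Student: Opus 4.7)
The plan is to combine Carathéodory's theorem (\cref{caratheodory}) with the Tarski--Seidenberg quantifier elimination for real closed fields. Since $\puiseux$ is a real closed field, its first-order theory in the language of ordered rings admits quantifier elimination, so every formula defines a semialgebraic set.

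First I would write $\bsalg$ as a finite union of basic semialgebraic sets of the form \cref{eq:basic}; equivalently, $\bsalg$ is defined by a quantifier-free first-order formula $\varphi(\bo{x})$ over $\puiseux$ in the language of ordered rings. By \cref{caratheodory}, a point $\bo{y} \in \puiseux^n$ belongs to $\conv(\bsalg)$ if and only if the following first-order formula holds:
\[
\exists \bo{x}_1, \dots, \bo{x}_{n+1} \in \puiseux^n, \ \exists \bo{\lambda}_1, \dots, \bo{\lambda}_{n+1} \in \puiseux, \quad \psi(\bo{y}, \bo{x}_1, \dots, \bo{x}_{n+1}, \bo{\lambda}_1, \dots, \bo{\lambda}_{n+1}) \, ,
\]
where $\psi$ is the quantifier-free formula expressing the conjunction $\bigwedge_{k} \varphi(\bo{x}_k) \wedge \bigwedge_{k} \bo{\lambda}_k \geq 0 \wedge \sum_{k} \bo{\lambda}_k = 1 \wedge \bo{y} = \sum_{k} \bo{\lambda}_k \bo{x}_k$.

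Applying Tarski--Seidenberg quantifier elimination in the real closed field $\puiseux$ to eliminate the existential quantifiers yields a quantifier-free formula in $\bo{y}$ defining $\conv(\bsalg)$, which shows that $\conv(\bsalg)$ is semialgebraic.

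The only subtlety is invoking quantifier elimination at the right level of generality: it must be applied over $\puiseux$ rather than over $\R$. Since $\puiseux$ is real closed, the classical Tarski--Seidenberg theorem (or equivalently the fact that the theory of real closed fields is model-complete and admits quantifier elimination in the language of ordered rings) applies verbatim, so no genuine obstacle arises.
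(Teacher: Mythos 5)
Your proposal is correct and follows exactly the same route as the paper's proof: combine Carath\'eodory's theorem (\cref{caratheodory}) with quantifier elimination for real closed fields applied over $\puiseux$. The paper states this as an immediate consequence of these two facts, and you have simply spelled out the first-order formula whose quantifiers must be eliminated.
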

\begin{proof}
This is an immediate consequence of \cref{caratheodory} and of the quantifier elimination in real closed fields~\cite[Theorem~3.3.15]{marker_model_theory}.
\end{proof}

Let us make the following definition.

\begin{definition}
We say that a set $\salg \subset \trop^{n}$ is a \emph{tropicalization of a convex semialgebraic} set if there exists a convex semialgebraic set $\bsalg \subset \puiseux^{n}$ such that $\val(\bsalg) = \salg$.
\end{definition}

Given $d \geq 1$, the \emph{support} of a point $y \in \trop^d$ is defined as the set of indices $k \in [d]$ such that $y_k \neq \zero$. Given a nonempty subset $K \subset [d]$, and a set $Y \subset \trop^d$, we define the \emph{stratum of $Y$ associated with $K$} as the subset of $\R^K$ formed by the projection $(y_k)_{k \in K}$ of the points $y \in Y$ with support~$K$. The stratum associated with the set $[d]$ is referred to as the \emph{main stratum}.

We say that a set $\salg \subset \R^{d}$ is a \emph{basic semilinear} set if it is a relatively open polyhedron of the form
\[
\{x \in \R^{d} \colon \forall i = 1,\dots,p, \, \langle A_{i}, x \rangle > b_{i} \land \forall i = p+1,\dots,q, \, \langle A_{i}, x \rangle = b_{i} \} \, ,
\]
where the matrix $A \in \Q^{q \times d}$ is rational, the vector $b \in \R^{q}$ is real, and $\langle \cdot, \cdot \rangle$ denotes the standard scalar product in $\R^{d}$. We say that a set is \emph{semilinear} if it is a finite union of basic semilinear sets. Note that $\salg \subset \R^{d}$ is a closed semilinear set if and only if it is a finite union of polyhedra of the form $Ax \ge b$, where the matrix $A \in \Q^{q \times d}$ is rational and the vector $b \in \R^{q}$ is real. The following proposition characterizes the tropicalizations of convex semialgebraic sets. This result is based on the Denef--Pas quantifier elimination in the theory on real closed valued fields.
\begin{proposition}\label{tropical_conv_salg}
A set $\salg \subset \trop^{n}$ is a tropicalization of a 
convex semialgebraic set if and only if $\salg$ is tropically convex and every stratum of $\salg$ is a closed semilinear set.
\end{proposition}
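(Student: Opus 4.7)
The plan is to prove the two implications separately.

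\emph{Necessity ($\Rightarrow$).} Suppose $\salg = \val(\bsalg)$ for a convex semialgebraic $\bsalg \subset \puiseux^n$. Tropical convexity of $\salg$ is exactly Lemma~\ref{valuation_of_convex_set}. For the semilinear structure (with rational defining data) on each stratum, I would appeal to the Denef--Pas quantifier elimination in the theory of real closed valued fields: the valuation-image of any semialgebraic subset of $\puiseux^n$ is definable in the value group $\R \cup \{\zero\}$, yielding a semilinear set with $\Z$-linear data, a property that is preserved under restriction to each support pattern $K \subset [n]$. The delicate point is closedness of each stratum, for which convexity is essential. Given a limit point $\bar y \in \R^K$ of a stratum $\salg_K$, I would extract a sequence $\bo y^{(m)} \in \bsalg$ with $\supp(\bo y^{(m)}) = K$ and $\val(\bo y^{(m)}|_K) \to \bar y$, and construct a single point $\bo y \in \bsalg$ with $\val(\bo y|_K) = \bar y$ as a convex combination $\bo y = (1 - \bo \lambda)\bo y^{(m_0)} + \bo \lambda \bo y^{(m_1)}$ with a Puiseux weight $\bo \lambda$ of carefully chosen valuation, exploiting the fact that the condition $0 < \bo \lambda < 1$ in $\puiseux$ allows enormous freedom in valuation and thereby forces the coordinatewise valuations of $\bo y$ to land exactly on $\bar y$.

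\emph{Sufficiency ($\Leftarrow$).} Suppose $\salg \subset \trop^n$ is tropically convex with each stratum a closed semilinear set. Since $\val$ is insensitive to sign, it suffices to find a convex semialgebraic $\bsalg \subset \nnpuiseux^n$ with $\val(\bsalg) = \salg$. The strategy, using Lemmas~\ref{convex_hull_salg} and~\ref{valuation_of_convex_hull}, is to produce a semialgebraic set $\bo V \subset \nnpuiseux^n$ satisfying $\tconv(\val(\bo V)) = \salg$ and then set $\bsalg \coloneqq \conv(\bo V)$, which is convex by construction, semialgebraic by Lemma~\ref{convex_hull_salg}, and satisfies $\val(\bsalg) = \tconv(\val(\bo V)) = \salg$ by Lemma~\ref{valuation_of_convex_hull}. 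To construct $\bo V$, I observe that each stratum $\salg_K$ is itself tropically convex (as one can check from the hypothesis on $\salg$) and closed semilinear in $\R^K$ with rational data; a tropical Minkowski--Weyl argument then expresses $\salg_K$ as the tropical convex hull of finitely many rational vertices together with finitely many rational rays. I would then lift each vertex $v \in \R^K$ to the Puiseux point $(t^{v_k})_{k \in K}$ padded by zeros outside $K$, and each ray to a one-parameter semialgebraic family. The union $\bo V$ over all strata is semialgebraic, and tropical convexity of $\salg$ forces $\tconv(\val(\bo V)) = \salg$ with no overshoot.

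The principal obstacle is the closedness argument in the necessity direction: Denef--Pas alone yields a semilinear formula for the stratum that may involve strict inequalities, and convexity must be leveraged to close these up. The key mechanism is that convex combinations in $\puiseux$ with one coefficient of carefully chosen valuation can shift coordinatewise valuations with complete freedom, so that every limit valuation is actually attained by some point of $\bsalg$. In the sufficiency direction, the subtle check is that tropical combinations involving generators from distinct strata do not escape $\salg$, which is precisely granted by the tropical convexity hypothesis.
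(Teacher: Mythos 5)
Your high-level strategy in the sufficiency direction — produce a semialgebraic ``seed'' set $\bo V \subset \nnpuiseux^n$ with $\val(\bo V)$ sitting inside $\salg$ and tropically spanning it, then pass to $\conv(\bo V)$ and invoke \cref{convex_hull_salg,valuation_of_convex_hull} — is exactly the paper's strategy. The concrete realization, however, diverges and introduces an avoidable dependency. You decompose each stratum via a \emph{tropical Minkowski--Weyl theorem} into finitely many vertices and rays, and lift generators via $v \mapsto (t^{v_k})_k$. That a closed semilinear tropically convex subset of $\R^K$ admits such a finite internal representation is a nontrivial structure theorem which the paper does not invoke; your additional claim that the generators can be taken rational is moreover false in general, since the right-hand sides $b$ in the semilinear description are only real. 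The paper sidesteps all of this: it simply takes each polyhedron $\polyh = \{ x \in \trop^n \colon A x_K \ge b, \, x_{[n] \setminus K} = \zero \}$ appearing in the (given) semilinear decomposition and lifts it wholesale to the monomial set $\bpolyh = \{ \bo x \in \nnpuiseux^n \colon \prod_{k \in K} \bo x_k^{A_{ik}} \ge t^{b_i}, \, \bo x_{[n]\setminus K} = 0 \}$, verifies $\val(\bpolyh) = \polyh$ directly from \cref{e-val,e-val2,e-val3}, and lets $\conv$ and \cref{valuation_of_convex_hull} do the rest. No vertex/ray decomposition is needed, and rationality of $A$ (but not of $b$) is exactly what makes the monomials $\bo x_k^{A_{ik}}$ semialgebraic.

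In the necessity direction there is a more serious gap. Tropical convexity via \cref{valuation_of_convex_set} is correct, and semilinearity via Denef--Pas is the right citation. But your closedness mechanism does not work as described. You propose to realize a limit valuation $\bar y$ of a stratum as $\val\bigl((1-\bo\lambda)\bo y^{(m_0)} + \bo\lambda \bo y^{(m_1)}\bigr)$ for a single Puiseux weight $\bo\lambda$. Modulo cancellation, the coordinatewise valuation of such a combination is the tropical segment $k \mapsto \max\bigl(y^{(m_0)}_k,\, \mu + y^{(m_1)}_k\bigr)$ where $\mu = \val(\bo\lambda) \le 0$; as $\mu$ ranges over $(-\infty,0]$ this sweeps out a one-parameter curve in $\R^K$, which cannot be made to pass through an arbitrary limit point $\bar y$ (for instance, every coordinate of the result is bounded below by the corresponding coordinate of $y^{(m_0)}$). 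Worse, since $\bsalg$ need not lie in $\nnpuiseux^n$, cancellation of leading terms can occur and \cref{e-val} is only an inequality, so even the claimed formula for the valuation of the combination is not guaranteed. The closedness of the strata really is a theorem that the paper delegates to \cite[Theorems~4 and~10]{tropical_spectrahedra}, and a correct self-contained proof would require substantially more machinery than a two-point convex combination.
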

\begin{proof}
The ``only if'' part follows from \cite[Theorems~4 and~10]{tropical_spectrahedra} and \cref{valuation_of_convex_set}. To prove the opposite implication, suppose that $\salg$ is tropically convex and has closed semilinear strata. Therefore, it is a finite union of sets of the form $\polyh = \{ x \in \trop^{n} \colon Ax_{K} \ge b, \, x_{[n]\setminus K} = \zero \}$, where, for every $L \subset [n]$, $x_{L}$ denotes the vector formed by the coordinates of $x$ taken from $L$, the matrix $A \in \Q^{m \times \card{K}}$ is rational and the vector $b \in \R^{m}$ is real. Take any such set $\polyh$ and consider the set
\[
\bpolyh \coloneqq \{\bo x \in \nnpuiseux^{n} \colon \forall i \in [m], \, \prod_{k \in K} \bo x_{k}^{A_{ik}}\ge t^{b_{i}} \land \bo x_{[n] \setminus K} = 0 \} \, .
\]
Note that $\bpolyh$ is semialgebraic. Moreover, by \cref{e-val,e-val2,e-val3} we have $\val(\bpolyh) \subset \polyh$. Furthermore, if $x \in \polyh$, and we take $\bo x_{k} \coloneqq t^{x_{k}}$ for all $k \in [n]$ (with the convention that $t^{\zero} = 0$), then we have $\bo x \in \bpolyh$. Therefore $\val(\bpolyh) = \polyh$. Let $\bo U$ denote the union of all sets $\bpolyh$ that arise in this way. We have $\val(\bo U) = \salg$ and $\bo U$ is semialgebraic. Thus, if we take $\bsalg \coloneqq \conv(\bo U)$, then $\bsalg$ is convex and semialgebraic by \cref{convex_hull_salg}. Moreover, \cref{valuation_of_convex_hull} shows that $\val(\bsalg) = \salg$.
\end{proof}

\subsection{Tropical Metzler spectrahedra}

Let us recall that a real symmetric matrix is positive semidefinite if it admits a Cholesky decomposition. This is equivalent to the nonnegativity of its principal minors, its smallest eigenvalue, and the associated quadratic form. All of these properties are still equivalent for symmetric matrices defined over arbitrary real closed fields, such as Puiseux series (this is a consequence of the completeness of the theory of such fields, see~\cite[Corollary~3.3.16]{marker_model_theory}). This implies that the definition of a spectrahedron is valid over $\puiseux$.

\begin{definition}
Suppose that $\bo Q^{(0)}, \dots, \bo Q^{(n)} \in \puiseux^{m \times m}$ are symmetric matrices. Then, the \emph{spectrahedron} associated with these matrices is defined as
\[
\bspectra = \{\bo x \in \puiseux^{n} \colon \bo Q^{(0)} + \bo x_{1} \bo Q^{(1)} + \dots + \bo x_{n} \bo Q^{(n)} \loew 0 \} \, ,
\]
where the symbol $\loew$ denotes the Loewner order on symmetric matrices. (By definition, $X \loew Y$ if $X - Y$ is positive semidefinite.)
\end{definition}

In our previous works \cite{tropical_spectrahedra,issac2016jsc} we introduced the notion of tropical spectrahedra and a special subclass of these objects called tropical Metzler spectrahedra. The latter have a simpler combinatorial description; moreover, any generic tropical spectrahedron can be represented by a boolean combination of tropical Metzler spectrahedra~\cite[Sections~5.3--5.4]{tropical_spectrahedra}. Tropical spectrahedra are defined as follows.

\begin{definition}\label{definition:tropical_spectra}
We say that a set $\spectra \subset \trop^{n}$ is a \emph{tropical spectrahedron} if there exists a spectrahedron $\bspectra \subset \nnpuiseux^{n}$ such that $\spectra = \val(\bspectra)$.
\end{definition}

A square matrix $\bo M$ is called a \emph{(negated) Metzler matrix} if its off-diagonal entries are nonpositive. 
Similarly, a matrix $M \in \strop^{m \times m}$ is called a \emph{tropical Metzler matrix} if its off-diagonal entries belong to $\negtrop \cup \{\zero \}$. Fix a sequence of symmetric tropical Metzler matrices $Q^{(0)}, \dots, Q^{(n)} \in \strop^{m \times m}$. For every pair $(i,j) \in [m]^{2}$ we consider the tropical polynomial $Q_{ij}(X)$ defined as
\[
Q_{ij}(X) \coloneqq Q^{(0)}_{ij} \tplus (Q^{(1)}_{ij} \tdot X_{1}) \tplus \dots \tplus (Q^{(n)}_{ij} \tdot X_{n}) \, .
\]

\begin{definition}\label{definition:tropical_metzler_spectra}
The \emph{tropical Metzler spectrahedron} described by $Q^{(0)}, \dots, Q^{(n)} \in \strop^{m \times m}$, denoted $\spectra(Q^{(0)} | Q^{(1)},\dots, Q^{(n)})$, is the set of all points $x \in \trop^{n}$ which satisfy the following two conditions:
\begin{itemize}[nosep]
\item $Q_{ii}^{+}(x) \ge Q_{ii}^{-}(x)$ for every $i \in [m]$
\item $Q_{ii}^{+}(x) \tdot Q_{jj}^{+}(x) \ge (Q_{ij}(x))^{\tdot 2}$ for every $i,j \in [m]^{2}, i \neq j$.
\end{itemize}
Note that the function $Q_{ij}(x)$ is well-defined for all $x \in \trop^{n}$ 
because every
$Q^{(k)}$ is a tropical Metzler matrix. If the matrix $Q^{(0)}$ is equal to $\zero$, then we say that $\spectra(\zero|Q^{(1)},\dots, Q^{(n)})$ is a \emph{tropical Metzler spectrahedral cone}. We say that $\spectra(Q^{(0)} | Q^{(1)},\dots, Q^{(n)})$ is \emph{real} if it is included in $\R^{n}$. 
\end{definition}

\begin{remark}
The definition above differs slightly from the one of~\cite{tropical_spectrahedra,issac2016jsc}. 
Indeed, 
in these references
it was enough to work with tropical Metzler spectrahedral cones, while the use of affine tropical spectrahedra is indispensable in the context of the Helton--Nie conjecture. The connection between the two notions is given in \cite[Lemma~20]{tropical_spectrahedra}.
\end{remark}

The name ``tropical Metzler spectrahedron'' is justified by the fact that these sets are indeed tropical spectrahedra, as shown in \cite[Proposition~23 and Lemma~20]{tropical_spectrahedra}.

\begin{proposition}\label{metzler_is_spectra}
Every tropical Metzler spectrahedron is a tropical spectrahedron.
\end{proposition}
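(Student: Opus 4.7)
The plan is to exhibit an explicit spectrahedron $\bspectra \subset \nnpuiseux^n$ whose image under $\val$ coincides with the given tropical Metzler spectrahedron $\spectra(Q^{(0)} | Q^{(1)}, \ldots, Q^{(n)})$. I introduce the section $\iota$ of the signed valuation which sends $a \in \postrop$ to $t^a$, $\tminus a \in \negtrop$ to $-t^a$, and $\zero$ to $0$, and apply it entrywise to each $Q^{(k)}$ to obtain symmetric Puiseux matrices $\bo Q^{(k)} \in \puiseux^{m \times m}$. Since each $Q^{(k)}$ is tropical Metzler, all off-diagonal entries of every $\bo Q^{(k)}$ are nonpositive. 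Writing $\bo Q(\bo x) \coloneqq \bo Q^{(0)} + \sum_{k=1}^n \bo x_k \bo Q^{(k)}$, I take
\[
\bspectra \coloneqq \{\bo x \in \nnpuiseux^n : \bo Q(\bo x) \loew 0\},
\]
which is a genuine spectrahedron over $\puiseux$ contained in $\nnpuiseux^n$: the sign constraints $\bo x_k \ge 0$ are absorbed into the linear matrix inequality by appending a diagonal block with entries $\bo x_1,\ldots,\bo x_n$.

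For the inclusion $\val(\bspectra) \subset \spectra(Q^{(0)}|\ldots)$, I fix $\bo x \in \bspectra$. Positive semidefiniteness of $\bo Q(\bo x)$ implies nonnegativity of its diagonal entries and of its $2 \times 2$ principal minors. The diagonal entry $\bo Q(\bo x)_{ii}$ is a Puiseux sum whose positive (resp.\ negative) contributions come exactly from the monomials indexed by $\postrop$- (resp.\ $\negtrop$-) coefficients of the tropical polynomial $Q_{ii}$. Applying the valuation, using~\cref{e-val,e-val2,e-val3}, and observing that for sums of Puiseux terms of common sign the inequality in~\cref{e-val} is an equality, yields precisely $Q_{ii}^+(\val(\bo x)) \ge Q_{ii}^-(\val(\bo x))$. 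The same calculation applied to $\bo Q(\bo x)_{ii} \bo Q(\bo x)_{jj} - \bo Q(\bo x)_{ij}^2 \ge 0$ gives the second defining inequality $Q_{ii}^+(\val(\bo x)) \tdot Q_{jj}^+(\val(\bo x)) \ge Q_{ij}(\val(\bo x))^{\tdot 2}$.

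For the reverse inclusion, given $x \in \spectra(Q^{(0)}|\ldots)$ I consider the canonical lift $\bo x_k \coloneqq t^{x_k}$ (with $\bo x_k = 0$ whenever $x_k = \zero$). The task is to show $\bo Q(\bo x) \loew 0$, and this is the core difficulty of the argument, because in general the $1\times 1$ and $2\times 2$ principal minor conditions do not suffice to certify positive semidefiniteness; the proof must exploit the Metzler structure in an essential way. My plan is to proceed by induction on the size $m$ via Schur complements: picking a diagonal pivot $\bo Q(\bo x)_{ii} > 0$ (which exists whenever $Q_{ii}^+(x) \neq \zero$, by the first tropical inequality), the Schur complement with respect to $i$ is again symmetric with nonpositive off-diagonal entries, and a direct leading-term computation—using the tropical $2 \times 2$ inequality $Q_{ii}^+(x) \tdot Q_{jj}^+(x) \ge Q_{ij}(x)^{\tdot 2}$—shows that it inherits the tropical Metzler spectrahedron defining inequalities, so the inductive hypothesis applies. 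The boundary situations in which some diagonal vanishes tropically are handled by the classical fact that a positive semidefinite matrix with a zero on the diagonal has the corresponding row and column identically zero, which permits one to delete them and reduce to a strictly smaller principal submatrix. This inductive structure, together with the required leading-term bookkeeping, is precisely the content of \cite[Proposition~23 and Lemma~20]{tropical_spectrahedra}, and granting it completes the proof.
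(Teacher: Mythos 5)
The paper's ``proof'' of this proposition is just a citation to \cite[Proposition~23 and Lemma~20]{tropical_spectrahedra}, so there is no internal argument to compare against; I will assess your sketch on its own terms and against what the cited construction must accomplish.

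Your argument for the inclusion $\val(\bspectra) \subset \spectra(Q^{(0)}|\ldots)$ is correct: the diagonal and $2\times 2$ principal minor nonnegativity, combined with the absence of cancellation within the positive and negative blocks separately, does yield the two tropical defining inequalities.

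The reverse inclusion, however, contains a genuine gap, and it appears precisely where you flag the ``core difficulty.'' The canonical lift $\bo x_k \coloneqq t^{x_k}$, together with the naive section $\iota(a) = t^a$, $\iota(\tminus a) = -t^a$, does \emph{not} produce a positive semidefinite matrix in general. Take $m = 3$, $n = 3$, with $Q^{(k)}_{kk} = 0 \in \postrop$ for $k = 1,2,3$, $Q^{(0)}_{ij} = \tminus 0$ for all $i \neq j$, and all other entries equal to $\zero$. Then $x = (0,0,0)$ satisfies both tropical Metzler inequalities (the $2\times 2$ conditions hold with equality: $0 + 0 \ge 2\cdot 0$), but the canonical lift yields
\[
\bo Q(\bo x) \;=\; \begin{pmatrix} 1 & -1 & -1 \\ -1 & 1 & -1 \\ -1 & -1 & 1 \end{pmatrix} \;=\; 2I - J \, ,
\]
which has eigenvalue $-1$ and is therefore not positive semidefinite. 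So the spectrahedron $\bspectra$ you construct does not contain a preimage of $x$, and $\val(\bspectra) \subsetneq \spectra(Q^{(0)}|\ldots)$. The underlying issue is that, over $\R$, the $1\times 1$ and $2\times 2$ principal minor conditions on a symmetric Metzler matrix do not imply positive semidefiniteness, and tropical ties in the $2\times 2$ inequalities reproduce exactly this failure at the level of leading coefficients. The Schur complement reduction you sketch is also problematic: to iterate it you would need the $2\times 2$ tropical conditions of the complement, which amounts to controlling $3\times 3$ principal minors of $\bo Q(\bo x)$, and those are not governed by the hypotheses. Consequently, the claim that your inductive plan ``is precisely the content'' of \cite[Proposition~23 and Lemma~20]{tropical_spectrahedra} cannot be right: the construction there necessarily uses a more careful choice of lift (for instance, rescaling the negative entries so as to enforce strict diagonal dominance of the Puiseux matrix) rather than the naive section $\iota$ and the lift $\bo x_k = t^{x_k}$. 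Your proposal gets the shape of the problem right but the specific spectrahedron $\bspectra$ and lift you exhibit do not certify the result.
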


\section{The tropical analogue of the Helton--Nie conjecture}
\label{sec:tropical_consequence}

As stated in the introduction, Scheiderer has shown 
that the cone of positive semidefinite forms over $\R$ is a counterexample to the Helton--Nie conjecture~\cite{scheiderer_helton_nie}. We first note that this yields a counterexample to the analogue of this conjecture over Puiseux series.

\begin{corollary}[of~{\cite[Corollary~4.25]{scheiderer_helton_nie}}]
The cone of positive semidefinite forms of degree $2d$ in $n$ variables over $\puiseux$ can be expressed as a projection of a spectrahedron over $\puiseux$ only when $2d = 2$ or $n \leq 2$ or $(n, 2d) = (3,4)$.
\end{corollary}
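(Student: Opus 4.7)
The plan is to derive this corollary from Scheiderer's theorem by the Tarski--Seidenberg transfer principle. Since the theory of real closed fields is complete (\cite[Corollary~3.3.16]{marker_model_theory}, already invoked in the excerpt), the real closed fields $\R$ and $\puiseux$ satisfy the same first-order sentences in the language of ordered fields. I would therefore show that, for each fixed pair of positive integers $(m, N)$, the existence of a projected-spectrahedron representation \emph{of size $(m, N)$} for the cone of positive semidefinite forms of degree $2d$ in $n$ variables can be phrased as a first-order sentence $\Phi_{m,N}^{n,2d}$ in the language of ordered fields. Applying Scheiderer's theorem over $\R$ and transferring to $\puiseux$ for every $(m, N)$ then yields the result, since any representation over $\puiseux$ would necessarily fix some finite $m$ and $N$.

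To write $\Phi_{m,N}^{n,2d}$ explicitly, I rely on three first-order expressibility facts. First, positive semidefiniteness of an $m \times m$ symmetric matrix is equivalent, over any real closed field, to the nonnegativity of its principal minors, a finite conjunction of polynomial inequalities in the entries (this is the observation recalled in the paragraph preceding \cref{definition:tropical_spectra}). Second, the membership of a coefficient vector $c$ in the cone of PSD forms is expressible by the formula $\psi(c) \equiv \forall y \in F^n,\ \sum_\alpha c_\alpha y^\alpha \ge 0$. Third, the defining matrices of the spectrahedron and the linear projection consist of finitely many field elements over which one can existentially quantify. Combining these ingredients, $\Phi_{m,N}^{n,2d}$ takes the schematic form
\[
\exists Q^{(0)},\dots,Q^{(N)},\ \exists \pi,\ \forall c\ \Bigl(\psi(c)\ \liff\ \exists y\,\bigl[\pi(y) = c \,\wedge\, Q^{(0)}+{\textstyle\sum_{i=1}^{N}} y_i Q^{(i)} \loew 0\bigr]\Bigr),
\]
which is manifestly first-order in the language of ordered fields.

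By Scheiderer's theorem, $\Phi_{m,N}^{n,2d}$ is false over $\R$ for every pair $(m, N)$ whenever $(n, 2d)$ lies outside the exceptional list; completeness of the theory of real closed fields then propagates this falsity to $\puiseux$, so no finite size $(m, N)$ can produce a representation of the PSD cone over $\puiseux$, which is exactly the stated corollary. The only point that demands any care is the first-order encoding of the Loewner relation $\loew 0$ for fixed $m$, which is handled by the principal-minor characterization; beyond this, the argument is a direct appeal to Scheiderer's theorem together with the transfer principle, so I do not expect any substantive obstacle.
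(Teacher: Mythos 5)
Your proposal is correct and follows essentially the same route as the paper: fix the dimensions of the putative representation, observe that the existence of such a representation is a first-order sentence in the language of ordered fields (rings), and invoke completeness of the theory of real closed fields to transfer Scheiderer's negative result from $\R$ to $\puiseux$. The paper states this more tersely without writing out the sentence, but the underlying argument is identical.
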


\begin{proof}
Consider a real closed field $\rcfield$, and integers $d$, $m$, $n$, and $p$. The statement ``the cone of positive semidefinite forms of degree $2d$ in $n$ variables over $\rcfield$ is the projection of a spectrahedron in $\rcfield^p$ associated with matrices of size $m \times m$'' is a sentence in the language of ordered rings. Since the theory of real closed fields is complete~\cite[Corollary~3.3.16]{marker_model_theory}, this sentence is true over $\R$ if and only if it is true over $\rcfield$.  
\end{proof}

We next state the main result of this paper. 
We shall prove a special case of this result in \cref{section:realcones}, and derive the general case in \cref{section:general}. 
\begin{theorem}\label{theorem:tropical_convex_sets}
Fix a set $\slin \subset \trop^{n}$. Then, the following conditions are equivalent:
\begin{theoenum}
\item $\slin$ is a tropicalization of a convex semialgebraic set \label{main:item1}
\item $\slin$ is tropically convex and has closed semilinear strata \label{main:item2}
\item $\slin$ is tropically convex and every stratum of $\slin$ is a projection of a real tropical Metzler spectrahedron \label{main:item3}
\item $\slin$ is a projection of a tropical Metzler spectrahedron. \label{main:item4}
\end{theoenum}
\end{theorem}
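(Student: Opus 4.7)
The plan is to prove the theorem by closing the cycle $(a) \Leftrightarrow (b) \Rightarrow (c) \Rightarrow (d) \Rightarrow (a)$. The equivalence $(a) \Leftrightarrow (b)$ is already Proposition~\ref{tropical_conv_salg}, so nothing needs to be added there. The implication $(d) \Rightarrow (a)$ is the easy direction: by \cref{metzler_is_spectra} a tropical Metzler spectrahedron $\spectra \subset \trop^{n+p}$ coincides with $\val(\bspectra)$ for some spectrahedron $\bspectra \subset \nnpuiseux^{n+p}$, and since the valuation map acts coordinatewise, the coordinate projection $\pi \colon \trop^{n+p} \to \trop^n$ satisfies $\pi(\val(\bspectra)) = \val(\pi(\bspectra))$. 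The set $\pi(\bspectra)$ is convex (as the image of a convex set by a linear map) and semialgebraic (by the Tarski--Seidenberg theorem, e.g.\ \cite[Theorem~3.3.15]{marker_model_theory}), so its valuation $\slin$ is a tropicalization of a convex semialgebraic set.

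The core technical step is $(b) \Rightarrow (c)$. Given $\slin$ tropically convex with closed semilinear strata, I would fix a nonempty $K \subset [n]$ and show that the stratum $\slin_K \subset \R^K$, which by assumption is a finite union of polyhedra defined by rational-slope linear inequalities with real right-hand sides, can be expressed as the projection of a real tropical Metzler spectrahedron. My plan is to handle each basic piece $\{y \in \R^K \colon Ay \geq b\}$ separately, using the correspondence developed in \cite{issac2016jsc} between tropical Metzler spectrahedral cones and subfixed point sets of Shapley operators of stochastic mean-payoff games. Concretely, a rational inequality $\sum_{k \in K} a_{ik}y_k \geq b_i$ can be rewritten, after clearing denominators and separating positive and negative contributions, as a tropical inequality of the form $Q_{ii}^+(y) \geq Q_{ii}^-(y)$, and more refined inequalities of the form $\tsum_k y_k \geq \max(\cdots)$ coming from disjunctions can be captured by the off-diagonal quadratic constraints $Q_{ii}^+(x) \tdot Q_{jj}^+(x) \geq (Q_{ij}(x))^{\tdot 2}$ of \cref{definition:tropical_metzler_spectra}, possibly after introducing auxiliary projection variables to encode ``choices'' between polyhedra of the union. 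Finally, the finite union of such representations of individual polyhedra is itself a projection of a (larger) real tropical Metzler spectrahedron, by a standard disjunction-via-projection argument in the Metzler setting.

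The implication $(c) \Rightarrow (d)$ consists in gluing stratum-by-stratum representations into a single tropical Metzler spectrahedron living in $\trop^{n+p}$. For each stratum $\slin_K$, I would lift its real Metzler representative $\spectra_K \subset \R^{|K|+p_K}$ into $\trop^{n+p}$ by enforcing $x_k = \zero$ for $k \notin K$; this is achieved at the matrix level by setting the corresponding columns of the tropical Metzler matrices to $\zero$, so that the feasibility constraints decouple the $\zero$-coordinates from the real ones. I would then take a disjunction over all nonempty $K \subset [n]$ appearing in $\slin$, again using that finite unions of projections of tropical Metzler spectrahedra are themselves projections of tropical Metzler spectrahedra; tropical convexity of $\slin$ is preserved because this property was assumed from the beginning and disjunctive constructions at the Metzler level produce a set whose projection recovers exactly $\slin$.

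The hardest step will be $(b) \Rightarrow (c)$, both because the Metzler formalism is significantly more restrictive than general tropical inequalities (it admits only diagonal $Q^+ \geq Q^-$ and quadratic $Q^+_{ii}Q^+_{jj} \geq (Q_{ij})^{\tdot 2}$ constraints), and because it is not \emph{a priori} clear that arbitrary rational polyhedra --- let alone finite unions thereof --- fit into this shape under projection. The Shapley-operator correspondence from \cite{issac2016jsc} is what makes this plausible, but the translation from rational polyhedra to games and back to Metzler quadratic constraints is the genuinely novel combinatorial content of the proof, and I would expect a dedicated section (the ``real cones'' section mentioned just before the theorem statement) to carry it out.
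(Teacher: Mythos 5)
Your overall plan — closing the cycle $(a) \Leftrightarrow (b) \Rightarrow (c) \Rightarrow (d) \Rightarrow (a)$, with $(d) \Rightarrow (a)$ the easy direction — matches the paper's, and your treatment of $(a) \Leftrightarrow (b)$ and $(d) \Rightarrow (a)$ is fine. But both of your core steps rest on a claim that is false, and the fix is exactly what the paper's Section 4 and the homogenization lemmas supply.

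The problematic claim is that a \emph{finite union} of projections of tropical Metzler spectrahedra is again a projection of a tropical Metzler spectrahedron (``a standard disjunction-via-projection argument''). This cannot be true: a tropical Metzler spectrahedron is tropically convex (each of the constraints $Q^+_{ii}(x) \ge Q^-_{ii}(x)$ and $Q^+_{ii}(x) \tdot Q^+_{jj}(x) \ge (Q_{ij}(x))^{\tdot 2}$ defines a tropically convex set), and coordinate projections preserve tropical convexity, so any projected tropical Metzler spectrahedron is tropically convex — whereas an arbitrary union of such sets is not. The correct statement, and what the paper proves in \cref{lemma:union_of_projections}, is that the \emph{tropical convex hull} of a union of two projected tropical Metzler spectrahedra is a projected tropical Metzler spectrahedron; and even this already requires the homogenization machinery of \cref{lemma:projection_of_homogenization}. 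In $(c) \Rightarrow (d)$ one then uses the hypothesis that $\slin$ is tropically convex to write $\slin = \tconv\bigl(\bigcup_K \slin \cap X_K\bigr)$ rather than a bare union. Your sketch omits the $\tconv$ and the homogenization step entirely, which is where most of the Section~5 work lives.

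For $(b) \Rightarrow (c)$ the same fallacy makes your plan fail earlier: you propose to represent each basic polyhedral piece $\{y : Ay \geq b\}$ of the stratum separately and then take a union. But an individual basic piece of a tropically convex semilinear set need not itself be tropically convex, so it cannot, in general, be a projection of a (real) tropical Metzler spectrahedron at all. The paper avoids this by treating the \emph{whole} closed semilinear stratum as a single object. After passing to the real homogenization, \cref{tropical_cones_from_operators} shows that a nonempty closed semilinear real tropical cone $\slin$ is $\{x : x \le \shapley(x)\}$ for the single semilinear monotone homogeneous operator $\shapley(x)_k = \sup\{y_k : y \in \slin, y \le x\}$; then Ovchinnikov's minimax representation (\cref{lemma:semilinear_min_max}), a digraph encoding of $\shapley$ as a Shapley operator, the Zwick--Paterson transformation (\cref{lemma:zwick_paterson}) to binarize the chance nodes, and the two graph transformations of \cref{lemma:first_transformation,lemma:second_transformation} to restore the Player~I--Chance--Player~II structure, finally land on the form required by \cref{proposition:tropical_metzler_graphs}. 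You correctly point to the Shapley-operator bridge and to \cite{issac2016jsc}, but the piece-by-piece-then-union outline you give in its place would not go through; the monolithic $\sup$-operator construction is the idea you are missing.
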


We point out that \cref{th-main} is a corollary of \cref{theorem:tropical_convex_sets}.

\begin{proof}[Proof of \cref{th-main}]
Let $\bsalg \subset \puiseux^{n}$ be any convex semialgebraic set. By \cref{theorem:tropical_convex_sets}, the set $\val(\bsalg) \subset \trop^{n}$ is a projection of a tropical Metzler spectrahedron $\spectra \subset \trop^{n + n'}$. By \cref{metzler_is_spectra}, there is a spectrahedron $\bspectra' \subset \nnpuiseux^{n + n'}$ such that $\val(\bspectra') = \spectra$. Let $\bo \pi \colon \puiseux^{n+n'} \to \puiseux^{n}$ denote the projection on the first $n$ coordinates. Then $\slin = \val(\bo \pi(\bspectra'))$ and hence $\val(\bsalg) =  \val(\bo \pi(\bspectra'))$.
\end{proof}

\section{Tropical Helton--Nie conjecture for real tropical cones}\label{section:realcones}

In this section, we show that the tropical analogue of Helton--Nie conjecture is true for real tropical cones. 
We say that a set $X \subset \R^{n}$ is a \emph{real tropical cone} if for every $x, y \in X$ and every $\lambda, \mu \in \R$ we have $(\lambda \tdot x) \tplus (\mu \tdot y) \in X$. A real tropical cone is nothing but the main stratum of a tropical cone as defined in \cref{sec:tropical_convexity}.
Indeed, if $Y$ is a tropical cone, then $Y\cap \R^n$ is a real tropical cone, whereas if $X$ is a real tropical cone, then $X\cup\{-\infty\}$ is a tropical cone. 

\subsection{Preliminaries on semilinear monotone homogeneous operators}

We say that a function $\shapley \colon \R^{n} \to \R^{m}$ is \emph{piecewise affine} if there exists a set of full-dimensional polyhedra $\polyh^{(1)}, \dots, \polyh^{(p)} \subset \R^{n}$ satisfying $\bigcup_{s = 1}^{p} \polyh^{(s)} = \R^{n}$ and such that the restriction of $\shapley$ to $\polyh^{(s)}$ is affine, i.e., $\shapley_{|\polyh^{(s)}}(x) = A^{(s)}x + b^{(s)}$ for some matrix $A^{(s)} \in \R^{m \times n}$ and vector $b^{(s)} \in \R^{m}$. In particular, piecewise affine functions are continuous (since the polyhedra $\polyh^{(1)},\dots, \polyh^{(p)}$ are closed). We shall say that the family $(\polyh^{(s)}, A^{(s)}, b^{(s)})_s$ is a \emph{piecewise description} of the function $\shapley$. 

We recall the following minimax representation result proved by Ovchinnikov~\cite{ovchinnikov}, in which we denote $\shapley(x) = (\shapley_{1}(x), \dots, \shapley_{m}(x))$.
\begin{theorem}[\cite{ovchinnikov}]\label{theorem:min_max_representation}
Suppose that the function $\shapley \colon \R^{n} \to \R^{m}$ is piecewise affine, and let $(\polyh^{(s)}, A^{(s)}, b^{(s)})_{s \in [p]}$ be a piecewise description of $\shapley$. Then, for every $k \in [n]$ there exists a number $M_{k} \ge 1$ and a family $\{ S_{k i}\}_{i \in [M_{k}]}$ of subsets of~$[p]$ such that for all $x \in \R^{n}$ we have
\[
\shapley_{k}(x) = \min_{i \in [M_{k}]} \max_{s \in S_{ki}} (A^{(s)}_{k}x + b^{(s)}_{k}) \, .
\]
\end{theorem}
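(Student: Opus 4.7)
The plan is to work component by component. Fix $k \in [m]$ and write $f \coloneqq \shapley_k$ together with the affine pieces $\ell_s(x) \coloneqq A_k^{(s)} x + b_k^{(s)}$ for $s \in [p]$, so that $f \equiv \ell_s$ on $\polyh^{(s)}$ and $f$ is continuous on $\R^n$. The goal is to produce subsets $T_s \subseteq [p]$ such that
\[
f(x) = \min_{s \in [p]} \max_{r \in T_s} \ell_r(x) \,,
\]
after which setting $S_{ki} \coloneqq T_i$ and $M_k \coloneqq p$ finishes the proof.

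The natural candidate is
\[
T_s \coloneqq \{\, r \in [p] : \ell_r(y) \le \ell_s(y) \text{ for every } y \in \polyh^{(s)} \,\} \,, \qquad M_s(x) \coloneqq \max_{r \in T_s} \ell_r(x) \,.
\]
Since $s \in T_s$ and every $r \in T_s$ satisfies $\ell_r \le \ell_s$ throughout $\polyh^{(s)}$, one immediately gets $M_s = \ell_s = f$ on $\polyh^{(s)}$. Since every $x \in \R^n$ lies in some $\polyh^{(s^*)}$, this yields $\min_s M_s(x) \le M_{s^*}(x) = f(x)$, which is one half of the desired identity.

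The crux is the reverse inequality $M_s \ge f$ on $\R^n$ for every $s$: equivalently, given $s$ and $x_0 \in \R^n$, one must exhibit some $r \in T_s$ with $\ell_r(x_0) \ge f(x_0)$. My first instinct is a path argument: pick $y \in \polyh^{(s)}$, parametrize the segment $\gamma(t) \coloneqq (1-t)y + tx_0$ for $t \in [0,1]$, and use that $f \circ \gamma$ is continuous piecewise affine in $t$, realised by a finite sequence of pieces $\ell_{r_1} = \ell_s, \ell_{r_2}, \dots, \ell_{r_N}$ on adjacent subintervals. Walking from $y$ to $x_0$ and tracking the successive slopes, I would locate an index $r_j$ which both belongs to $T_s$ and satisfies $\ell_{r_j}(x_0) \ge f(x_0)$, exploiting continuity at breakpoints.

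The main obstacle is making this geometric argument rigorous, since the one-dimensional trace along $\gamma$ only loosely constrains the behaviour of $\ell_r - \ell_s$ over the full-dimensional polyhedron $\polyh^{(s)}$. If the direct path argument resists formalisation, a natural fallback is induction on the number of pieces $p$: identify a pair of adjacent polyhedra $\polyh^{(s)}, \polyh^{(s')}$ where $f$ is locally the minimum or the maximum of $\ell_s$ and $\ell_{s'}$, collapse them into a single piece, apply the inductive hypothesis to the smaller piecewise description, and glue the resulting expression with an outer $\min$ or $\max$. Either route yields the desired scalar minimax representation, which is all that is required for the theorem.
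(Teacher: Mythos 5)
The paper does not prove this theorem; it cites Ovchinnikov~\cite{ovchinnikov}, so there is no in-paper proof to measure your attempt against. On the substance: your candidate index sets $T_s = \{r \in [p] : \ell_r \le \ell_s \text{ on } \polyh^{(s)}\}$ are exactly the ones in Ovchinnikov's representation (up to swapping $\min$ and $\max$), and the half you actually prove --- that $s \in T_s$ forces $M_s = \ell_s = f$ on $\polyh^{(s)}$, hence $\min_s M_s \le f$ everywhere --- is complete and correct.

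The other half, $M_s \ge f$ on all of $\R^n$, is the content of the theorem and is left unproved. Your path argument does not close off the shelf. First, the terminal piece $\ell_{r_N}$ at $x_0$ (the one with $\ell_{r_N}(x_0) = f(x_0)$) need not lie in $T_s$: take $n=1$ with $\ell_1 \equiv 0$ on $(-\infty,0]$, $\ell_2(x) = x$ on $[0,1]$, $\ell_3(x) = 2-x$ on $[1,\infty)$, $s = 1$, $x_0 = 3$; then $T_1 = \{1,2\}$ while the segment terminates on piece $3 \notin T_1$, and the correct witnesses are the \emph{earlier} pieces $\ell_1, \ell_2$. Second, as you yourself note, the one-dimensional slope sequence along $\gamma$ does not control the sign of $\ell_r - \ell_s$ over the full-dimensional region $\polyh^{(s)}$, so ``tracking the successive slopes'' does not by itself certify membership in $T_s$. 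Making the ray argument rigorous requires something like Ovchinnikov's Lemma~2.1 (comparison of two affine functions along a ray from a point where they agree) together with a careful choice of the base point $y \in \ri(\polyh^{(s)})$; none of that is supplied. The fallback induction is likewise only a plan: it is unclear which adjacent pair to merge, nor that the merged cell again yields a valid piecewise description with full-dimensional closed polyhedra and a single affine map. So while the decomposition is right, the crux is genuinely missing, and what is written is a reduction to Ovchinnikov's lemma rather than a proof of it.
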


We say that a function $\shapley \colon \R^{n} \to \R^{m}$ is \emph{semilinear} if its graph $\{(x,y) \in \R^{n \times m} \colon y = \shapley(x) \}$ is a semilinear set. The next lemma shows that continuous semilinear functions are piecewise affine.

\begin{lemma}\label{lemma:piecewise_affine}
Suppose that the continuous function $\shapley \colon \R^{n} \to \R^{m}$ is semilinear. Then, it is piecewise affine. Moreover, it admits a piecewise description $(\polyh^{(s)}, A^{(s)}, b^{(s)})_{s \in [p]}$ such that the polyhedra $\polyh^{(s)}$ are semilinear, and the matrices $A^{(s)}$ are rational. 
\end{lemma}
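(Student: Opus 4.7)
The plan is to analyze the graph $G \coloneqq \{(x,y) \in \R^{n+m} : y = \shapley(x)\}$, which is a semilinear subset of $\R^{n+m}$ by hypothesis. I would first decompose $G$ as a finite union of basic semilinear pieces
\[
B_i = \{(x,y) \colon A_i(x,y) > b_i,\ C_i(x,y) = d_i\} \, ,
\]
with rational matrices $A_i, C_i$ and real vectors $b_i, d_i$. Each $B_i$ is relatively open in the affine flat $F_i \coloneqq \{(x,y) : C_i(x,y) = d_i\}$. The crucial step is to show that, because $G$ is the graph of a \emph{single-valued} function, the projection $\pi : F_i \to \R^n$ onto the first $n$ coordinates must be injective: otherwise the linear part of $F_i$ would contain a nonzero ``vertical'' vector $(0, v)$, and the relative openness of $B_i$ in $F_i$ would produce two distinct points $(x_0, y_0)$ and $(x_0, y_0 + \epsilon v)$ in $B_i \subset G$ lying above the same $x_0$, contradicting the graph property.

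Writing $C_i = [C_i^x \mid C_i^y]$, injectivity of $\pi|_{F_i}$ is equivalent to $C_i^y$ having trivial kernel, so I would pick a rational left inverse $L_i$ of $C_i^y$ and define
\[
\shapley_i(x) \coloneqq L_i d_i - L_i C_i^x x \, .
\]
This affine map has rational linear part and real constant vector by construction, and coincides with $\shapley$ on $\pi(B_i)$. Continuity of $\shapley$ then extends the equality $\shapley = \shapley_i$ to the closure $P_i \coloneqq \overline{\pi(B_i)}$.

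Next, I would verify that each $P_i$ is a closed polyhedron with rational defining matrix and real right-hand side, and that the full-dimensional ones cover $\R^n$. Since $\pi|_{F_i}$ is an affine isomorphism onto its image, closure commutes with this projection, so $P_i = \pi(\overline{B_i})$ is the image of a closed polyhedron of the required type under a rational affine map; Fourier--Motzkin elimination confirms that this image stays of the same type. The identity $\bigcup_i \pi(B_i) = \R^n$ holds because $\shapley$ is defined on all of $\R^n$, and a standard dimension argument then allows one to discard pieces whose projection is not full-dimensional: no neighborhood in $\R^n$ can be covered by finitely many lower-dimensional semilinear sets, so each $x_0 \in \R^n$ must lie in the closure of some full-dimensional $\pi(B_i)$. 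Keeping the triples $(P_i, -L_i C_i^x, L_i d_i)$ corresponding to the full-dimensional indices yields the required piecewise description.

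The main obstacle is the injectivity of $\pi|_{F_i}$; once the vertical kernel of $F_i$ is ruled out, both the affine form of $\shapley$ on each piece and the rationality of its linear part follow automatically, and everything else reduces to routine polyhedral bookkeeping.
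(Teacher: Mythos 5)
Your proposal is correct and takes a genuinely different route at the key step. The paper uses continuity first, to conclude that the graph is a \emph{closed} semilinear set and hence a finite union of closed polyhedra $\{(x,y) \colon Bx + Cy \geq d\}$ with $B, C$ rational and $d$ real; it then argues that the fiber over each $\overbar{x}$ in the projection is a single point, hence a vertex cut out by an invertible rational $m\times m$ submatrix $C_I$ of $C$, and takes a union over all choices of $I$ to obtain the finite affine subdivision with formula $y = C_I^{-1}d_I - C_I^{-1}B_I x$. You instead keep the strict inequalities from the basic semilinear decomposition and argue topologically: relative openness of a nonempty $B_i$ in its equality flat $F_i$, combined with single-valuedness of the graph, rules out any vertical direction in $F_i$, so $C_i^y$ is injective and a rational left inverse yields the affine piece directly from the equality constraints; continuity enters only at the end, to pass from $\pi(B_i)$ to its closure. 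The paper's route stays within closed polyhedra throughout but must take a union over invertible submatrices (since the choice of $I$ may depend on $\overbar{x}$), whereas yours produces one affine formula per basic piece; the trade-off is some polyhedral bookkeeping that you leave implicit---namely, that empty $B_i$ must be discarded, and that for nonempty $B_i$ the closure $\overline{B_i}$ equals the polyhedron obtained by weakening the strict inequalities to nonstrict ones, which is what guarantees that $\overline{B_i}$, and then $P_i$ after Fourier--Motzkin elimination, still has a rational constraint matrix and real right-hand side.
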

\begin{proof}
Since $\shapley$ is continuous and semilinear, the graph of $\shapley$ is a closed semilinear set. Therefore, it is a finite union of semilinear polyhedra. Let $\{ (x,y) \colon Bx + Cy \ge d \}$, where $B \in \Q^{p \times n}$, $C \in \Q^{p \times m}$, $d \in \R^{p}$ be one of these polyhedra. If we fix $\overbar{x}$, then, by the definition of a graph, the polyhedron consisting of all $y$ such that $Cy \ge d - B\overbar{x}$ reduces to a point $\overbar{y}$. Thus, there exists an invertible submatrix $C_{I} \in \Q^{m \times m}$ of $C$ such that $\overbar{y} = C_{I}^{-1}(d_{I} - B_{I}\overbar{x}) = C_{I}^{-1}d_{I} - C_{I}^{-1}B_{I}\overbar{x}$. In other words, the graph of $\shapley$ is a finite union of polyhedra of the form 
\[
\polyh = \{(x,y) \colon Bx + Cy \ge d, y = C_{I}^{-1}d_{I} - C_{I}^{-1}B_{I}x \} \, ,
\] 
where $C_{I}$ is an invertible submatrix of $C$. As a result, if $\pi \colon \R^{n + m} \to \R^{n}$ denotes the projection on the first $n$ coordinates, and $x \in \pi(\polyh)$ is any point, then we have $\shapley(x) = C_{I}^{-1}d_{I} - C_{I}^{-1}B_{I}x$. By eliminating the polyhedra $\pi(\polyh)$ that are not full dimensional, we obtain a piecewise description of $\shapley$ satisfying the expected requirements.
\end{proof}

We say that a selfmap $\shapley \colon \R^n \to \R^n$ is \emph{monotone} if $\shapley(x) \leq \shapley(y)$ as soon as $x \leq y$, where $\leq$ denotes the coordinatewise partial order over $\R^n$. Such a function is said to be \emph{(additively) homogeneous} if $\shapley(\lambda + x) = \lambda + \shapley(x)$ for all $\lambda \in \R$ and $x \in \R^n$. Here, if $z \in \R^n$, then $\lambda + z$ stands for the vector with entries~$\lambda + z_k$. 

The following observation is well known~\cite{crandalltartar}.
\begin{lemma}\label{lemma:nonexpansive}
Every monotone homogeneous operator is nonexpansive in the supremum norm.
\end{lemma}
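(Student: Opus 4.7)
The plan is to use the standard squeeze argument, which fits in three lines once one unpacks the supremum norm. Fix two points $x, y \in \R^n$ and set $\lambda \coloneqq \| x - y \|_\infty = \max_k |x_k - y_k|$. Then by definition of the supremum norm we have the coordinatewise inequalities $y - \lambda \le x \le y + \lambda$, where, following the convention established just before the lemma, $y \pm \lambda$ denotes the vector whose $k$-th entry is $y_k \pm \lambda$.

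Next I would apply monotonicity of $\shapley$ to this double inequality, obtaining $\shapley(y - \lambda) \le \shapley(x) \le \shapley(y + \lambda)$ (again coordinatewise). Then homogeneity lets me pull the scalar out: $\shapley(y \pm \lambda) = \shapley(y) \pm \lambda$, so that $\shapley(y) - \lambda \le \shapley(x) \le \shapley(y) + \lambda$. Coordinatewise, this reads $|\shapley_k(x) - \shapley_k(y)| \le \lambda$ for every $k$, hence $\| \shapley(x) - \shapley(y) \|_\infty \le \lambda = \| x - y \|_\infty$, which is the desired nonexpansiveness.

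There is essentially no obstacle here — the argument is the classical Crandall--Tartar observation, and the paper already cites \cite{crandalltartar}. The only thing worth being careful about is making the notational convention explicit, namely that $\lambda$ is simultaneously used as a scalar and (when added to a vector) as the constant vector $\lambda \mathbf{1}$, so that the homogeneity hypothesis $\shapley(\lambda + z) = \lambda + \shapley(z)$ applies directly. No semialgebraic or tropical machinery is needed for this step.
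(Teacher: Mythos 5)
Your proof is correct and is essentially the paper's own argument (the classical Crandall--Tartar observation): the paper proves $\shapley(x) \le \|x-y\| + \shapley(y)$ from $x \le \|x-y\| + y$ via monotonicity and homogeneity, leaving the symmetric inequality implicit, whereas you write out the two-sided sandwich explicitly. Same idea, same dependencies, no gap.
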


\begin{proof}
Observe that $x \leq \| x-y\| + y$. Therefore, we get $\shapley(x) \leq \shapley(\| x-y\| + y) = \| x-y\| + \shapley(y)$. 
\end{proof}

Kolokoltsov showed that every monotone homogeneous operator $\shapley$ has a minimax representation as a dynamic programming operator of a zero-sum game~~\cite{kolokoltsov}.
When $\shapley$ is semilinear, the following result shows that we have a finite representation
of the same nature.
\begin{lemma}\label{lemma:semilinear_min_max}
If $\shapley \colon \R^{n} \to \R^{n}$ is semilinear, monotone, and homogeneous, then it can be written in the form
\begin{align}\label{eq:semilineargames}
\forall k, \, \shapley_{k}(x) = \min_{i \in [M_{k}]} \max_{s \in S_{ki}} (A^{(s)}_{k}x + b^{(s)}_{k}) \, ,
\end{align}
where $A^{(1)}, \dots, A^{(p)} \in \Q^{n \times n}$ is a sequence of stochastic matrices, $b^{(s)} \in \R^{n}$ for all $s \in [n]$,  $M_{k} \ge 1$ for all $k \in [n]$, and $S_{ki}$ is a subset of~$[p]$ for every $k \in [n]$ and $i \in [M_{k}]$.
\end{lemma}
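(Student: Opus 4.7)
The strategy is to reduce everything to the two tools already available: the Ovchinnikov minimax representation (\cref{theorem:min_max_representation}) applied to a piecewise affine description of $\shapley$, and \cref{lemma:piecewise_affine}, which produces such a piecewise description with rational matrices when $\shapley$ is continuous and semilinear. First, I would observe that $\shapley$ is continuous: by \cref{lemma:nonexpansive} it is nonexpansive in the supremum norm, hence continuous. Therefore \cref{lemma:piecewise_affine} applies and yields a piecewise description $(\polyh^{(s)}, A^{(s)}, b^{(s)})_{s\in[p]}$ of $\shapley$ in which the polyhedra $\polyh^{(s)}$ are full-dimensional semilinear sets and the matrices $A^{(s)}$ are rational. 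Plugging this description into \cref{theorem:min_max_representation} immediately delivers a representation of the shape in~\cref{eq:semilineargames}, with the required rationality of the coefficient matrices.

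The remaining—and only nontrivial—task is to upgrade the matrices $A^{(s)}$ to stochastic matrices. Here I would exploit the two hypotheses piece by piece. Fix $s\in[p]$ and a point $x$ in the (nonempty) interior of $\polyh^{(s)}$. Since $x+\epsilon \mathbf{1}$ and $x+\epsilon e_k$ still lie in $\inter(\polyh^{(s)})$ for small $\epsilon>0$ (where $\mathbf{1}$ denotes the all-ones vector and $e_k$ is the $k$-th standard basis vector), the affine formula $\shapley(y)=A^{(s)}y+b^{(s)}$ is valid at these points. Additive homogeneity gives
\begin{equation*}
A^{(s)} x + b^{(s)} + \epsilon\mathbf{1} \;=\; \shapley(x)+\epsilon\mathbf{1} \;=\; \shapley(x+\epsilon\mathbf{1}) \;=\; A^{(s)} x + b^{(s)} + \epsilon A^{(s)} \mathbf{1},
\end{equation*}
so $A^{(s)}\mathbf{1}=\mathbf{1}$, i.e.\ each row of $A^{(s)}$ sums to one. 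Monotonicity, applied to the inequality $x\leq x+\epsilon e_k$, forces $A^{(s)}(\epsilon e_k)\geq 0$, so every column, and hence every entry, of $A^{(s)}$ is nonnegative. Combined, these two observations show that each $A^{(s)}$ is a (rational) stochastic matrix, completing the proof.

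The main obstacle I anticipate is a purely technical one: making sure that the stochasticity argument can be carried out on every piece appearing in the Ovchinnikov formula. This is precisely why I would insist on using the piecewise description produced by \cref{lemma:piecewise_affine}, whose pieces are full-dimensional; without full-dimensionality, the perturbations $x+\epsilon \mathbf{1}$ and $x+\epsilon e_k$ might leave $\polyh^{(s)}$ and the pointwise identities used above would break down. Once full-dimensional pieces are guaranteed, the directional arguments based on homogeneity and monotonicity go through with no further subtleties.
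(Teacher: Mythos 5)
Your proof is correct and follows essentially the same route as the paper: continuity via nonexpansiveness, the rational piecewise description from \cref{lemma:piecewise_affine}, stochasticity of each $A^{(s)}$ by evaluating the homogeneity relation at $x+\epsilon\mathbf{1}$ and the monotonicity relation at $x+\epsilon e_k$ for $x$ interior to a full-dimensional piece, and finally Ovchinnikov's theorem (\cref{theorem:min_max_representation}). The only superficial difference is that you invoke the minimax representation before proving stochasticity rather than after, which changes nothing since the matrices in that representation are exactly those of the piecewise description.
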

\begin{proof}
\Cref{lemma:nonexpansive} shows that $\shapley$ is continuous. Let $(\polyh^{(s)}, A^{(s)}, b^{(s)})_{s \in [p]}$ a piecewise description of $\shapley$ as provided by \cref{lemma:piecewise_affine}. In particular, every matrix $A^{(s)}$ is rational. We want to show that it is stochastic. To this end, take any $x \in \inter(\polyh_{s})$. Let $y$ be the sum of the columns of $A^{(s)}$. Since $\shapley$ is homogeneous, for any $\smallvar > 0$ small enough we have $\shapley(\smallvar + x ) = A^{(s)}x + b^{(s)} + \smallvar y = \smallvar + \shapley(x)$. In other words, the sum of every line of $A^{(s)}$ is equal to $1$. Let $\stbase_{k}$ denote the $k$th vector of standard basis in $\R^{n}$. Since $\shapley$ is monotone, for $\smallvar > 0$ small enough we have $\shapley(x + \smallvar \stbase_{k}) = A^{(s)}x + b^{(s)} + \smallvar A^{(s)}\stbase_{k} \ge \shapley(x)$. In other words, the matrix $A^{(s)}$ has nonnegative entries in its $k$th column. Since $k$ was arbitrary, $A^{(s)}$ is stochastic. Therefore, the claim follows from \cref{theorem:min_max_representation}.
\end{proof}

We now characterize the class of closed semilinear real tropical cones. To this end, we use the model-theoretic definition of semilinear sets. Let $\logroups \coloneqq (0, +, \le)$ denote the language of ordered groups. Then, the elimination of quantifiers in divisible ordered abelian groups \cite[Theorem~3.1.17]{marker_model_theory}, shows that a set $\salg \subset \R^{n}$ is semilinear if and only if there exists a number $m \ge 0$, an $\logroups$-formula $\psi(x_{1}, \dots, x_{n+m})$, and a vector $\overbar{b} \in \R^{m}$ such that
\[
\salg = \{x \in \R^{n} \colon \psi(x_{1}, \dots, x_{n}, \overbar{b}) \; \text{is true in}\; \R \} \, .
\]

\begin{proposition}\label{tropical_cones_from_operators}
A set $\slin \subset \R^{n}$ is a closed semilinear real tropical cone if and only if there exists a semilinear monotone homogeneous operator $\shapley \colon \R^{n} \to \R^{n}$ such that $\slin = \{x \in \R^{n} \colon x \le \shapley(x) \}$.
\end{proposition}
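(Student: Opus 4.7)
My plan is to prove both implications of the equivalence. For the \emph{backward direction}, I will verify directly that, given a semilinear monotone homogeneous $\shapley$, the set $\slin \coloneqq \{x \in \R^{n} : x \le \shapley(x)\}$ has the three required properties. Continuity of $\shapley$, provided by \cref{lemma:nonexpansive}, gives closedness of $\slin$. Semilinearity follows because $\slin$ is the projection of the semilinear set $\{(x,y) \in \R^{2n} : y = \shapley(x),\, x \le y\}$ onto the first $n$ coordinates. The tropical cone property comes in two pieces: homogeneity yields $\shapley(\lambda + x) = \lambda + \shapley(x) \ge \lambda + x$ whenever $x \in \slin$, so $\lambda + x \in \slin$ for every $\lambda \in \R$; and monotonicity yields $\shapley(x \oplus y) \ge \shapley(x) \oplus \shapley(y) \ge x \oplus y$ whenever $x, y \in \slin$, so $x \oplus y \in \slin$. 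Combining these gives $(\lambda + x) \oplus (\mu + y) \in \slin$ for all $\lambda, \mu \in \R$.

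For the \emph{forward direction}, my strategy is to represent $\slin$ as a finite intersection of tropical affine halfspaces and then construct $\shapley$ coordinatewise. The first step invokes the external (Minkowski-type) description of closed tropical cones to write
\[
\slin = \bigcap_{\alpha \in [N]} \bigl\{x \in \R^{n} : \max_{i \in I_\alpha}(a_i^\alpha + x_i) \ge \max_{j \in J_\alpha}(b_j^\alpha + x_j)\bigr\},
\]
with $N$ finite and real coefficients $a_i^\alpha,b_j^\alpha$, relying on closedness of $\slin$ together with the polyhedral structure provided by \cref{lemma:piecewise_affine}. The next step uses the elementary equivalence $\max_j(b_j + x_j) \le M$ iff $b_j + x_j \le M$ for every $j$: each halfspace becomes a conjunction of coordinatewise upper bounds of the form $x_j \le -b_j^\alpha + \max_{i \in I_\alpha}(a_i^\alpha + x_i)$.

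For each $k \in [n]$, I then gather the finite family $\mathcal{F}_k$ of expressions $g(x) = -b_k^\alpha + \max_{i \in I_\alpha}(a_i^\alpha + x_i)$ arising from the constraints with $k \in J_\alpha$ (adjoining the trivial term $g(x) = x_k$ to ensure $\mathcal{F}_k$ is nonempty), and set $\shapley_k(x) \coloneqq \min_{g \in \mathcal{F}_k} g(x)$. By construction, $\slin = \{x : x \le \shapley(x)\}$. Each $g$ is monotone and satisfies $g(\lambda + x) = \lambda + g(x)$, properties inherited under pointwise minimum, so $\shapley$ is monotone and homogeneous; its graph is a finite union of polyhedra with $\{0,1\}$ slopes, hence $\shapley$ is semilinear. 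The main obstacle of the approach is the first step of the forward direction: producing a \emph{finite} tropical halfspace representation with real coefficients. This is a tropical Minkowski--Weyl-type phenomenon, whose finiteness and real-coefficient form rely on combining closedness with the rational-slope polyhedral structure from \cref{lemma:piecewise_affine}.
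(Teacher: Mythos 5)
Your backward direction is correct and matches the paper's argument essentially point for point: continuity from \cref{lemma:nonexpansive}, semilinearity from definability, and the tropical cone property from monotonicity and homogeneity.

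Your forward direction, however, takes a genuinely different route from the paper and contains a real gap. You propose to first write $\slin$ as a finite intersection of tropical affine halfspaces $\max_{i \in I_\alpha}(a_i^\alpha + x_i) \ge \max_{j \in J_\alpha}(b_j^\alpha + x_j)$, and then assemble $\shapley_k$ as a finite minimum over the resulting coordinatewise upper bounds. The construction of $\shapley$ from such a description is sound, but the existence of the description itself is precisely the substantive part, and you have not established it. The claim that every closed semilinear real tropical cone admits a \emph{finite} external representation by tropical halfspaces is a tropical Minkowski--Weyl-type theorem; it is true, but it is not an elementary observation, and it is not supplied by \cref{lemma:piecewise_affine}, which is a statement about continuous semilinear \emph{functions} being piecewise affine, not about external descriptions of semilinear \emph{sets}. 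A closed semilinear set in $\R^n$ is a finite union of polyhedra with rational facet slopes, but it is not obvious that tropical convexity of that union lets you replace the union by an intersection of tropical halfspaces, and the argument that a closed convex semilinear set is a polyhedron (classical Minkowski--Weyl plus semilinearity) does not transfer verbatim to the tropical setting without further work. You would also need to handle the empty case $\slin = \emptyset$ separately, since a finite intersection of tropical halfspaces with nonempty left sides is never empty over $\R^n$.

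The paper sidesteps all of this by defining $\shapley$ intrinsically as a retraction onto $\slin$: for nonempty $\slin$, set $\shapley_k(x) \coloneqq \sup\{y_k : y \in \slin,\ y \le x\}$, and for empty $\slin$ take $\shapley(x) = x - (1,\dots,1)$. One checks that each supremum is attained because $\{y \in \slin : y \le x\}$ is nonempty (translate any $z \in \slin$ down), closed, and bounded above; that the componentwise maximizers $y^{(1)} \tplus \cdots \tplus y^{(n)}$ lie in $\slin$ by the tropical cone property and equal $\shapley(x)$; that $\shapley$ is monotone and homogeneous directly from the definition; and that $\shapley$ is semilinear because the supremum is definable in $\logroups$ over the semilinear set $\slin$. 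This avoids any appeal to an external representation theorem. If you want to rescue your route, you would need to either prove the finite halfspace representation for closed semilinear real tropical cones or cite a result giving it; as written, that step is asserted rather than established.
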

\begin{proof}
To prove the first implication, we consider two cases. If $\slin$ is empty, then we take $\shapley(x) = x - (1,\dots,1)$. Otherwise, we define $\shapley$ by $\shapley_k(x) \coloneqq \sup \{y_k \colon y \in \slin \, , \ y \le x \}$ for all $k \in [n]$. We claim that every supremum is attained. Indeed, the set $\{y \in \slin \colon y \leq x \}$ is nonempty (take an arbitrary $z \in \slin$, and consider $\lambda + z$ for $\lambda \in \R$ small enough), closed, and bounded by $x$. Let $y^{(k)} \in \slin$ attaining the maximum in $\shapley_k(x)$. Then the point $y^{(1)} \tplus \dots \tplus y^{(n)}$ is an element of $\slin$ smaller than or equal to $x$. We deduce that it coincides with $\shapley(x)$. Subsequently, $\shapley(x)$ belongs to $\slin$. 

The operator $\shapley$ is semilinear because the supremum is definable in the language~$\logroups$, and $\slin$ is semilinear. Besides, $\shapley$ is obviously monotone. It is also homogeneous because if $y \in \slin$, then $\lambda + y \in \slin$ for all $\lambda \in \R$. Finally, the inclusion $\slin \subset \{ x \in \R^n \colon x \leq \shapley(x) \}$ is straightforward, while the inverse inclusion follows from the fact that if $x \leq \shapley(x)$, then $x = \shapley(x)$. 

Conversely, fix a semilinear monotone homogeneous operator $\shapley$ and take the set $\slin = \{x \in \R^{n} \colon x \le \shapley(x) \}$. This set is semilinear. Moreover, $\slin$ is closed because $\shapley$ is continuous. To prove that this is an real tropical cone, fix a pair $\lambda, \mu \in \R$ and $x, y \in \slin$. Since $\shapley$ is monotone and homogeneous, we have $\shapley(\max\{\lambda + x, \mu + y \}) \ge \shapley(\lambda + x) = \lambda + \shapley(x) \ge \lambda + x$ and similarly $\shapley(\max\{\lambda + x , \mu + y\}) \ge \mu + y$. Hence $\max\{\lambda + x, \mu + y \} \in \slin$.
\end{proof}

\begin{remark}
One could ask if there is a more direct way to obtain a piecewise description of the operator $\shapley$ given a real tropical cone $\salg$ (without the use of model theory). This can be done in the following way. We first decompose $\salg = \bigcup_{s = 1}^{p} \{x \in \R^{n} \colon A^{(s)}x \le b^{(s)}\}$ where the matrix $A^{(s)}$ has rational entries, and $b^{(s)}$ is a real vector. Then, given $x \in \R^{n}$ we denote by $P(x) \subset [p]$ the set of all $s \in [p]$ such that the polyhedron $\{ y \colon A^{(s)}y \le b^{(s)}, y \le x \}$ is nonempty. By the strong duality of linear programming (and the fact that $\shapley(x)$ is well defined for all $x \in \R^{n}$) we have
\begin{align*}
\shapley_{k}(x) &= \max_{s \in P(x)} \max\{y_{k} \colon A^{(s)}y \le b^{(s)}, y \le x \} \\
&= \max_{s \in P(x)} \min \{z^{\transpose}b^{(s)} + w^{\transpose}x \colon (A^{(s)})^{\transpose}z + w = \stbase_{k}, z \ge 0, w \ge 0 \} \, .
\end{align*}
For every $s \in [p]$, let $V^{(s)}_{k}$ denote the set of vertices of the rational polyhedron $\{ (z,w) \colon (A^{(s)})^{\transpose}z + w = \stbase_{k}, z \ge 0, w \ge 0\}$. Hence
\begin{equation}\label{eq:piecewise_description}
\shapley_{k}(x) = \max_{s \in P(x)} \min_{(z,w) \in V^{(s)}_{k}} \{z^{\transpose}b^{(s)} + w^{\transpose}x \} \, .
\end{equation} 
Moreover, by Farkas' Lemma, the polyhedron $\{ y \colon A^{(s)}y \le b^{(s)}, y \le x \}$ is nonempty if and only if for all $(z,w)$ such that $(A^{(s)})^{\transpose}z + w = 0$, $z \ge 0$, $w \ge 0$, we have $z^{\transpose}b^{(s)} + w^{\transpose}x \geq 0$. If $U^{(s)}$ consists of precisely one representative of every extreme ray of the rational cone $\{ (z,w) \colon (A^{(s)})^{\transpose}z + w = 0, z \ge 0, w \ge 0\}$, this amounts to the finite system of linear inequalities $z^{\transpose}b^{(s)} + w^{\transpose}x \geq 0$ for all $(z,w) \in U^{(s)}$. As a consequence, if we fix $I \subset [p]$, then the set $\polyh^{(I)}$ of all $x \in \R^{n}$ satisfying $P(x) = I$ is an intersection of half-spaces (both closed and open). By fixing the terms achieving the maximum and minimum in~\cref{eq:piecewise_description} we subdivide the sets $\polyh^{(I)}$ into smaller sets, $\polyh^{(I)} = \bigcup_{j = 1}^{N_{I}} \polyh^{(I)}_{j}$ such that every $\polyh^{(I)}_{j}$ is an intersection of half-spaces and $\shapley$ is affine on $\polyh^{(I)}_{j}$. Since $\shapley$ is continuous (by \cref{lemma:nonexpansive}), we can then restrict ourselves to these sets $\polyh^{(I)}_{j}$ that are full dimensional, and this gives the piecewise description of $\shapley$. 
\end{remark}

\begin{figure}[t]
\centering
\begin{tikzpicture}
\centering
\begin{scope}[scale = 0.6]
      \draw[gray!60, ultra thin] (-4.5,-4.5) grid (2.5,2.5);
       \fill[fill=lightgray, fill opacity = 1.0]
        (-3,0) -- (0,0) -- (0, -4) -- (1,-1) -- (1,1) -- cycle;
        \draw[very thick, line join=round] (-3,0) -- (0,0) -- (0, -4) -- (1,-1) -- (1,1) -- cycle;
\end{scope}
\end{tikzpicture}
\vspace*{-0.3cm}
\caption{A real tropical cone from \cref{ex:tropical_cone} (for $x_{3} = 0$).}\label{fig:tropical_cone}
\end{figure}
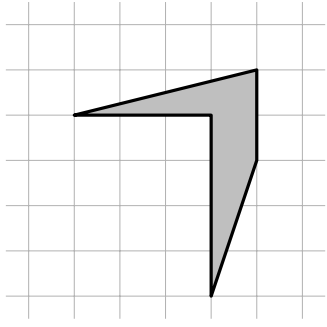

\begin{example}\label{ex:tropical_cone}
We illustrate our results on the following example. Take $n =3$, $p = 2$, $M_{k} = 1$, $S_{k,1} = \{1,2\}$ for all $k \in \{1,2,3\}$,
\begin{align*}
A^{(1)} &= 
\begin{bmatrix}
0 & 0 & 1 \\
1/4 & 0 & 3/4 \\
1 & 0 & 0
\end{bmatrix} \,
\quad b^{(1)} = 
\begin{bmatrix}
1 \\
3/4 \\
0
\end{bmatrix} \, , \\
A^{(2)} &= 
\begin{bmatrix}
0 & 1/3 & 2/3 \\
0 & 0 & 1 \\
0 & 1 & 0
\end{bmatrix} \,
\quad b^{(2)} = 
\begin{bmatrix}
4/3 \\
2\pi \\
0
\end{bmatrix} \, .
\end{align*}
Then, the operator $\shapley \colon \R^{3} \to \R^{3}$ is given by
\begin{align*}
\shapley_{1}(x) &= \max\Bigl\{x_{3} + 1, \frac{1}{3}x_{2} + \frac{2}{3}x_{3} + \frac{4}{3}\Bigr\} \, , \\
\shapley_{2}(x) &= \max\Bigl\{\frac{1}{4}x_{1} + \frac{3}{4}x_{3} + \frac{3}{4}, x_{3} + 2\pi \Bigr\} \, ,\\
\shapley_{3}(x) &= \max\{x_{1}, x_{2}\} \, .
\end{align*}
The real tropical cone $\{x \in \R^{3} \colon x \le \shapley(x) \}$ is depicted in \cref{fig:tropical_cone}.
\end{example}

\subsection{Description of real tropical cones by directed graphs}

We now describe how semilinear monotone homogeneous operators can be encoded by directed graphs. To this end we take a directed graph $\dgraph \coloneqq (\vertices, \edges)$, where the set of vertices is divided into Max vertices, Min vertices, and Random vertices, i.e., 
$\vertices \coloneqq \Minvertices \dunion\Randvertices \dunion \Maxvertices$,
where the symbol $\dunion$ denotes the disjoint union of sets.
We suppose that the sets of Max vertices and Min vertices are nonempty. If $\vertex \in \vertices$ is a vertex of $\dgraph$, then by 
$\inedge(\vertex):= \{(\vertexII,\vertex)\colon (\vertexII,\vertex)\in \edges \}$
we denote the set of its incoming edges, 
and by $\outedge(\vertex):= \{(\vertex,\vertexII)\colon (\vertex,\vertexII)\in \edges \}$ we denote the set of its outgoing edges.
We suppose that the every vertex has at least one outgoing edge. If $\vertex$ is a Min vertex or a Max vertex and $\edge \in \outedge(\vertex)$ is its outgoing edge, then we equip this edge with a real number $\payoff_{\edge}$. Furthermore, if $\vertex$ is a Random vertex, then we equip its set of outgoing edges with a rational probability distribution. More precisely, every edge $\edge \in \outedge(\vertex)$ is equipped with a strictly positive rational number $\probII_{\edge} \in \Q$, $\probII_{\edge} > 0$, and we suppose that $\sum_{\edge \in \outedge(\vertex)} \probII_{\edge} = 1$. 
We also make the following assumptions:
\begin{assumption}\label{assumption:graph}
\begin{assumpenum}
\item\label{item:graph1} Every path between any two Min vertices contains at least one Max vertex; 
\item\label{item:graph2} Every path between any two Max vertices contains at least one Min vertex;
\item\label{item:graph3} From every Random vertex, there is a path to a Min or a Max vertex.
\end{assumpenum}
\end{assumption}

We now  construct a semilinear monotone homogeneous operator from such a graph. We define a Markov chain with state space $\vertices$, and transition probabilities $\prob_{\vertex\vertex} \coloneqq 1$ for all $\vertex \in \Maxvertices \dunion \Minvertices$, $\prob_{\vertex\vertexII} \coloneqq \probII_{(\vertex,\vertexII)}$ if $\vertex \in \Randvertices$ and $(\vertex,\vertexII) \in \outedge(\vertex)$, and $\prob_{\vertex\vertexII} \coloneqq 0$ otherwise. Therefore, every
state of $\Maxvertices \dunion \Minvertices$ is absorbing,
and a trajectory of the Markov chain visits the 
states of $\Randvertices$ by picking at random,
for each vertex $\vertex\in \Randvertices$, 
one edge in $\outedge(\vertex)$
according to the probability law given by $\probII_{(\vertex,\cdot)}$,
until it reaches a state of $\Maxvertices \dunion \Minvertices$.
In this way, after leaving a Min vertex, the trajectory reaches a Max vertex, and vice versa.
If $\edge$ is an edge and $\vertex$ is a Max or Min vertex, then we denote by $\prob^{\edge}_{\vertex}$ the conditional probability to reach the absorbing state $\vertex$ from the head of $\edge$. Note that every $\prob^{\edge}_{\vertex}$ is rational since we have assumed that the $\probII_{\edge}$ are in $\Q$~\cite[Theorem~3.3.7]{kemeny_snell}. For the sake of simplicity, we assume that $\Minvertices = [n]$ and $\Maxvertices = [m]$. 

\begin{definition}\label{def:shapley_from_graphs}
The \emph{operator encoded by $\dgraph$} is the function $\shapley \colon \R^{n} \to \R^{n}$ defined as
\begin{equation}
\forall \vertex \in [n], \, (\shapley(x))_{\vertex} \coloneqq \min_{\edge \in \outedge(\vertex)}\Bigl( \payoff_{\edge} + \sum_{\vertexII \in [m]}\prob^{\edge}_{\vertexII}\max_{\edgeII \in \outedge(\vertexII)}\bigl( \payoff_{\edgeII} + \sum_{\vertexIII \in [n]} \prob^{\edgeII}_{\vertexIII}x_{\vertexIII} \bigr) \Bigr) \, . \label{eq:shapley_operator}
\end{equation}
\end{definition}

\begin{lemma}\label{lemma:shapley_from_graphs}
The operator encoded by $\dgraph$ is semilinear monotone homogeneous.
\end{lemma}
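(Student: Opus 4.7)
The plan is to verify the three claimed properties (semilinearity, monotonicity, homogeneity) in turn. The key preliminary observation I would establish first is the pair of normalization identities
\[
\sum_{\vertexII \in [m]} \prob^{\edge}_{\vertexII} = 1 \quad \text{for all } \vertex \in \Minvertices,\ \edge \in \outedge(\vertex), \qquad \sum_{\vertexIII \in [n]} \prob^{\edgeII}_{\vertexIII} = 1 \quad \text{for all } \vertexII \in \Maxvertices,\ \edgeII \in \outedge(\vertexII).
\]
For the first identity, the head of $\edge$ cannot itself be a Min vertex, for otherwise the single edge $\edge$ would witness a path between two Min vertices containing no Max vertex, contradicting \cref{item:graph1}. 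By \cref{item:graph3} and finiteness of $\vertices$, the Markov chain started at the head of $\edge$ is absorbed almost surely. Moreover, any trajectory that reaches a Min vertex $\vertex'$ before absorption would yield a path $\vertex \to \vertex'$ in $\dgraph$ visiting only Random vertices, again contradicting \cref{item:graph1}; hence absorption must occur at a Max vertex with probability one. The second identity is symmetric, using \cref{item:graph2}.

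Given these identities, homogeneity is a direct computation: substituting $\lambda + x$ into \cref{eq:shapley_operator} and applying the second identity factors $\lambda$ out of the innermost sum, after which $\max$ over $\edgeII$ preserves the additive $\lambda$; the first identity then factors $\lambda$ out of the combination $\sum_{\vertexII} \prob^{\edge}_{\vertexII}(\cdot)$, and the outer $\min$ over $\edge$ finally preserves it, giving $\shapley(\lambda + x) = \lambda + \shapley(x)$. Monotonicity follows immediately from the fact that every coefficient $\prob^{\edgeII}_{\vertexIII}$ is nonnegative, combined with the observation that $\min$, $\max$, and nonnegative affine combinations all preserve the coordinatewise order.

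For semilinearity, the plan is to exhibit $\shapley$ as a finite min-max of affine functions in $x$ with rational coefficients and real constants. The key step is to commute the inner $\max$ with the convex combination over $\vertexII$: for each fixed $\edge$,
\[
\sum_{\vertexII}\prob^{\edge}_{\vertexII}\max_{\edgeII\in\outedge(\vertexII)}\Bigl(\payoff_{\edgeII} + \sum_{\vertexIII}\prob^{\edgeII}_{\vertexIII}x_{\vertexIII}\Bigr) = \max_{\sigma}\sum_{\vertexII}\prob^{\edge}_{\vertexII}\Bigl(\payoff_{\sigma(\vertexII)} + \sum_{\vertexIII}\prob^{\sigma(\vertexII)}_{\vertexIII}x_{\vertexIII}\Bigr),
\]
where $\sigma$ ranges over all selectors $\vertexII \mapsto \sigma(\vertexII) \in \outedge(\vertexII)$. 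Substituting back into \cref{eq:shapley_operator}, each $\shapley_{\vertex}$ becomes a $\min$ over $\edge$ of a $\max$ over $\sigma$ of finitely many affine expressions, whose coefficients in $x$ are products of rational numbers (hence rational) and whose constant terms are real combinations of the $\payoff$'s. The graph of $\shapley$ is therefore defined by a Boolean combination of linear (in)equalities with rational coefficients in $(x,y)$ and real constants, which is exactly a semilinear set.

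The only delicate step is the normalization claim, which rests on the careful Markov-chain argument exploiting all three parts of \cref{assumption:graph}; the remaining arguments are mechanical, and I anticipate no substantial obstacle.
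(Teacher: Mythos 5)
Your proof is correct and follows essentially the same route as the paper's: the heart of the matter in both is establishing from \cref{assumption:graph} the normalization identities $\sum_{\vertexII \in [m]} \prob^{\edge}_{\vertexII} = 1$ and $\sum_{\vertexIII \in [n]} \prob^{\edgeII}_{\vertexIII} = 1$ (the paper arguing via ``only final classes are Max/Min'' and you via ``absorbed a.s.\ at a Max vertex by finiteness''), from which homogeneity follows; monotonicity is immediate and semilinearity is handled by the paper simply as definability in $\logroups$, while you make this explicit via the selector/min-max-of-affine form. The extra care you take in commuting $\max$ with the convex combination and in observing that the head of an edge leaving a Min vertex cannot itself be a Min vertex is not required by the paper's more compact argument but is entirely sound.
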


\begin{proof}
Let $\shapley$ be the operator encoded by $\dgraph$. It is obviously semilinear (as a definable function in $\logroups$) and monotone. We already observed that the Max and Min vertices are absorbing states in the Markov chain constructed from $\dgraph$. Besides, \cref{item:graph3} still holds in the subgraph obtained by removing the edges going out of the Max and Min vertices. As a consequence, for every Random vertex $\vertex$, the probability to reach a Min or Max vertex starting from $\vertex$ is positive. We deduce that the Max and Min vertices are the only final classes in the Markov chain. Let $\vertex$ be a Min vertex, and $\edge \in \outedge(\vertex)$. We claim that if $\vertexIII \in [n]$ is a Min state,
then $\prob^{\edge}_\vertexIII = 0$. Indeed, by \cref{item:graph1}, any path from the head of $\edge$ to $\vertexIII$ in $\dgraph$ contains a Max vertex. As a consequence, there is no path from the head of $\edge$ to $\vertexIII$ in the subgraph in which we have removed the edges going out of the Max and Min vertices. We deduce that for every Min vertex $\vertex$ and edge $\edge \in \outedge(\vertex)$, we have $\sum_{\vertexII \in [m]}\prob^{\edge}_{\vertexII} = 1$. Analogously, we can show that for all Max vertices $\vertexII$ and edge $\edgeII \in \outedge(\vertexII)$, $\sum_{\vertexIII \in [n]}\prob^{\edgeII}_{\vertexIII} = 1$. We deduce that the operator $\shapley$ is homogeneous.
\end{proof}

In the following lemma, we show that any semilinear monotone homogeneous operator is encoded by some digraph:
\begin{lemma}
Let $\shapley \colon \R^{n} \to \R^{n}$ be a semilinear monotone homogeneous operator. Then, there exists a directed graph $\dgraph$ satisfying \cref{assumption:graph} such that $\shapley$ is encoded by $\dgraph$.
\end{lemma}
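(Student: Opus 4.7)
The plan is to rely on the normal form provided by \cref{lemma:semilinear_min_max}: this lemma furnishes row-stochastic rational matrices $A^{(1)},\dots,A^{(p)}\in\Q^{n\times n}$, vectors $b^{(s)}\in\R^n$, integers $M_k\ge 1$, and sets $S_{ki}\subset[p]$ such that for every $k\in[n]$,
\[
\shapley_k(x) \;=\; \min_{i\in[M_k]}\,\max_{s\in S_{ki}}\bigl(A^{(s)}_k x + b^{(s)}_k\bigr)\,.
\]
My goal is to build a three-layer graph that mirrors this nested expression: Min vertices will handle the outer $\min$, Max vertices the inner $\max$, and Random vertices the averaging against the rows of $A^{(s)}$.

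Concretely, I would let $\Minvertices \coloneqq [n]$ with one vertex $\vertex_k$ per coordinate, and introduce one Max vertex $\vertexII_{k,i}$ for each pair $(k,i)$ with $i\in[M_k]$. From $\vertex_k$ I place one outgoing edge $\edge_{k,i}$ directly to $\vertexII_{k,i}$, with payoff $\payoff_{\edge_{k,i}}=0$. For each triple $(k,i,s)$ with $s\in S_{ki}$, I introduce a Random vertex $r_{k,i,s}$, add an edge from $\vertexII_{k,i}$ to $r_{k,i,s}$ with payoff $b^{(s)}_k$, and finally one edge from $r_{k,i,s}$ to $\vertex_j$ of probability $A^{(s)}_{k,j}$ for each $j\in[n]$ with $A^{(s)}_{k,j}>0$. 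Since $A^{(s)}$ is row-stochastic and rational, these probabilities are positive rationals summing to $1$, and since each row has at least one positive entry, every Random vertex has at least one outgoing edge.

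Checking \cref{assumption:graph} is then straightforward: any path between two Min vertices must begin with an edge into some Max vertex $\vertexII_{k,i}$, giving \cref{item:graph1}; a symmetric argument handles \cref{item:graph2}, since every Max vertex only has edges to Random vertices, which themselves only have edges to Min vertices; and \cref{item:graph3} is immediate because the successors of every Random vertex lie in $\Minvertices$. It remains to check that the operator encoded by $\dgraph$ coincides with $\shapley$, which I would do by direct substitution into \cref{eq:shapley_operator}. The head of $\edge_{k,i}$ is the Max vertex $\vertexII_{k,i}$ itself, which is an absorbing state of the Markov chain, so the sum $\sum_{\vertexII\in[m]}\prob^{\edge_{k,i}}_{\vertexII}(\cdots)$ collapses to the contribution of $\vertexII_{k,i}$. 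Starting from the head $r_{k,i,s}$ of the edge $\edgeII_{k,i,s}$, the chain is absorbed at $\vertex_j$ with probability $A^{(s)}_{k,j}$, hence $\prob^{\edgeII_{k,i,s}}_j = A^{(s)}_{k,j}$. Plugging in, the formula at $\vertex_k$ reduces to $\min_{i}\max_{s\in S_{ki}}\bigl(b^{(s)}_k+A^{(s)}_k x\bigr)=\shapley_k(x)$, as required.

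I do not anticipate any substantive obstacle: the argument is essentially a bookkeeping translation from the algebraic min--max normal form of \cref{lemma:semilinear_min_max} to the combinatorial language of dynamic programming operators associated with graphs. The only mild point of care is to ensure that every vertex has at least one outgoing edge, which follows from $M_k\ge 1$, from the (implicit) nonemptiness of the sets $S_{ki}$, and from the row-stochasticity of the matrices $A^{(s)}$.
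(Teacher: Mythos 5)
Your proposal is correct and is essentially the paper's own proof: the same three-layer construction (Min vertices $[n]$, Max vertices indexed by pairs $(k,i)$, Random vertices indexed by triples $(k,i,s)$), the same placement of payoffs $0$ and $b^{(s)}_k$, and the same use of the rows of $A^{(s)}$ as transition probabilities. The paper states that the requirements of the assumption are "straightforwardly satisfied," whereas you spell out the verification and the small implicit points (nonemptiness of $S_{ki}$, at least one positive entry per stochastic row), which is fine but does not change the argument.
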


\begin{proof}
The idea is to identify the representation~\cref{eq:semilineargames} to a special case of~\cref{eq:shapley_operator}, in which the probabilities $\prob^{\edge}_{\vertexII} $ with $\edge \in \outedge(\vertex)$ and $\vertex\in\Minvertices$ take only the values $0$ and $1$. 
Formally, 
let $A^{(1)}, \dots, A^{(p)} \in \Q^{n \times n}$ and $b^{(1)}, \dots, b^{(p)} \in \R^n$ such that \cref{lemma:semilinear_min_max} holds. We build $\dgraph$ as the graph in which the set of Min vertices is $[n]$, the set of Max vertices is $\dunion_k [M_k]$, and the set of Random vertices is $\dunion_{k \in [n], \, i \in [M_k]} S_{ki}$.
Let $k$ be a Min vertex. We add an edge $(k,i)$ for every $i \in [M_k]$, with $\payoff_{(k,i)} \coloneqq 0$. Moreover, for every $i \in [M_k]$, we add an edge $(i,s)$ for each $s \in S_{ki}$, with $\payoff_{(i,s)} \coloneqq b^{(s)}_k$. Finally, if $i \in [M_k]$ and $s \in S_{ki}$, we add an edge $(s,l)$ with $\probII_{(s,l)} \coloneqq A^{(s)}_{kl}$ for every $l \in [n]$ such that $A^{(s)}_{kl} > 0$. The requirements of \cref{assumption:graph} are straightforwardly satisfied. 
\end{proof}

\begin{figure}[t]
\centering
\begin{tikzpicture}[scale=0.65,>=stealth',max/.style={draw,rectangle,minimum size=0.5cm},min/.style={draw,circle,minimum size=0.5cm},av/.style={draw,diamond,minimum size=0.5cm}] 

\node[min] (min3) at (0.8, 4.5) {$3$};
\node[min] (min1) at (1.4, -2.5) {$1$};
\node[min] (min2) at (1.5, 0.5) {$2$};

\node[max] (max1) at (-1.5, -1) {$1$};
\node[max] (max2) at (4, 2) {$2$};
\node[max] (max3) at (4, -1) {$3$};

\node[av] (av13) at (5.5, -2){};
\node[av] (av23) at (-1, 2){};

\draw[->] (min1) to node[below, font=\small]{} (max1);
\draw[->] (min2) to node[above left = -3ex, font=\small]{} (max2);
\draw[->] (min3) to node[below, font=\small]{} (max3);

\draw[->] (max3) to node[above right=-1ex, font=\small]{} (min1);
\draw[->] (max3) to node[above right=-1ex, font=\small]{} (min2);

\draw[->] (max2) to node[above right=-1ex, font=\small]{$3/4$} (av13);

\draw[->] (max2) to node[above right=-1ex, font=\small]{$2\pi$} (min3);

\draw[->] (av13) to node[below, font=\small]{$1/4$} (min1);
\draw[->] (av13) to[out = 10, in = 50] node[above right, font=\small]{$3/4$} (min3);

\draw[->] (max1) to[out = 140, in = 170] node[above=0.5ex, font=\small]{$1$} (min3);
\draw[->] (max1) to node[below right=-0.8ex, font=\small]{$4/3$} (av23);

\draw[->] (av23) to node[above, font=\small]{$1/3$} (min2);
\draw[->] (av23) to node[above left=-1ex, font=\small]{$2/3$} (min3);

\end{tikzpicture}
\caption{Graph that encodes the operator from \cref{ex:tropical_cone}. Min vertices are depicted by circles, Max vertices are depicted by squares, Random vertices are depicted by diamonds. We put $\payoff_{\edge} = 0$ for every edge $\edge \in \edges$ that has no label.}\label{fig:graph}
\end{figure}
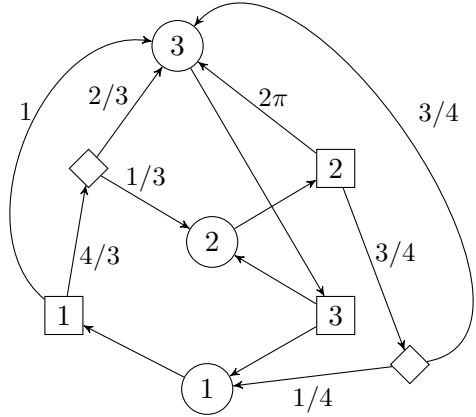

\begin{example}
The graph presented in \cref{fig:graph} encodes the operator from \cref{ex:tropical_cone}.
\end{example}

\subsection{Construction of tropical Metzler spectrahedra}
The following proposition characterizes the semilinear monotone homogeneous operators associated with tropical Metzler spectrahedral cones. We discussed this family of operators in our previous work~\cite{issac2016jsc}, where we interpreted them as the dynamic programming operators of a zero-sum game.

\begin{proposition}\label{proposition:tropical_metzler_graphs}
Suppose that the graph $\dgraph$ fulfills \cref{assumption:graph} and has the following properties:
\begin{itemize}
\item every Random vertex has exactly two outgoing edges and the probability distribution associated with these edges is equal to $(1/2,1/2)$
\item every edge outgoing from a Random vertex has a Max vertex as its head.
\end{itemize}
Let $\shapley$ denote the semilinear monotone homogeneous operator encoded by $\dgraph$. Then, the set $\{x \in \trop^{n} \colon x \le \shapley(x)\}$ is a tropical Metzler spectrahedral cone. 

\end{proposition}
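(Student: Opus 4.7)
The plan is to exhibit explicit symmetric tropical Metzler matrices $Q^{(0)}, Q^{(1)}, \dots, Q^{(n)}$ with $Q^{(0)} = \zero$ such that the conditions of \cref{definition:tropical_metzler_spectra} reproduce, term by term, the inequalities $x_v \le \shapley_v(x)$, for $v \in [n]$, that define $\{x \in \trop^n : x \le \shapley(x)\}$. The matrix will be indexed by the Max vertices of $\dgraph$, augmented by one auxiliary index per direct Min-to-Max edge.

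The first step is to make the operator $\shapley$ explicit. Since Random vertices emit edges only toward Max vertices, \cref{item:graph2} forces every outgoing edge of a Max vertex to land directly on a Min vertex. Setting $M_w(x) \coloneqq \tsum_{e' \in \outedge(w)} r_{e'} \tdot x_{u(e')}$ where $u(e')$ denotes the Min head of $e'$, the inequality $x_v \le \shapley_v(x)$ then unfolds as a conjunction, over $e \in \outedge(v)$, of two kinds of inequalities. Case~1 gives $M_w(x) \ge (-r_e) \tdot x_v$ for each direct edge $e = (v, w)$, while Case~2 gives $M_{w_1}(x) \tdot M_{w_2}(x) \ge ((-r_e) \tdot x_v)^{\tdot 2}$ for each edge $e = (v, s)$ with $s$ a Random vertex whose two outgoing edges target $w_1$ and $w_2$ (after multiplying the averaged inequality by~$2$).

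The second step is to build the matrices matching these inequalities against the conditions $Q^+_{ii} \ge Q^-_{ii}$ and $Q^+_{ii} \tdot Q^+_{jj} \ge Q_{ij}^{\tdot 2}$ of a Metzler spectrahedral cone. For each Max vertex $w$, I set the diagonal $Q_{ww}$ to be tropically positive with $Q^+_{ww}(x) = M_w(x)$; the diagonal condition is then trivially satisfied since $Q^-_{ww} = \zero$. For each pair $w_1 \ne w_2$ of Max vertices, I set the off-diagonal $Q_{w_1 w_2}$ to be tropically negative, with the $x_v$ coefficient equal to the tropical negation of $\tsum_{e = (v, s)\,:\,\outedge(s) = \{(s, w_1), (s, w_2)\}} (-r_e)$; the off-diagonal condition then captures all Case~2 inequalities attached to the pair $(w_1, w_2)$. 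Case~1 inequalities are handled via the auxiliary indices: for each direct edge $e = (v, w)$, I introduce a new index $\iota_e$, and set $Q_{\iota_e \iota_e}$ to be tropically positive equal to $r_e \tdot x_v$, the off-diagonal $Q_{w \iota_e}$ to the purely negative monomial $\tminus x_v$, and all remaining entries involving $\iota_e$ to $\zero$. The resulting $2 \times 2$ condition at $(w, \iota_e)$ reads $M_w(x) \tdot (r_e \tdot x_v) \ge x_v^{\tdot 2}$, which simplifies precisely to Case~1, and every other Metzler condition involving an auxiliary index reduces to $Q^+_{ii} \tdot Q^+_{jj} \ge \zero$, which is automatic.

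The main obstacle to address is the potential sign conflict in the diagonal entry $Q^{(v)}_{ww}$: a direct edge $(v, w)$ naturally calls for a tropically negative $x_v$ contribution in $Q_{ww}$, while an outgoing edge of $w$ to $v$ calls for a tropically positive one, whereas $Q^{(v)}_{ww} \in \strop$ carries a single sign. The auxiliary-index construction circumvents this by displacing each direct-edge inequality out of the diagonal of $w$ and into a dedicated $2 \times 2$ block involving $w$ and $\iota_e$, leaving the diagonal of $w$ free to carry only the positive contributions from $M_w$. After checking that the matrices so constructed are genuine symmetric tropical Metzler matrices (off-diagonal entries in $\negtrop \cup \{\zero\}$), one concludes that $\spectra(\zero | Q^{(1)}, \dots, Q^{(n)}) = \{x \in \trop^n : x \le \shapley(x)\}$.
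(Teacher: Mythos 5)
Your proof is correct, and it takes a genuinely different route from the paper's. The paper first rewrites the operator encoded by $\dgraph$ in the normalized ``Min--average--Max'' form \cref{eq:shapleymetzler} and then simply \emph{cites} Lemma~52 of the companion paper \cite{issac2016jsc}, which states that subfixed-point sets of such operators are tropical Metzler spectrahedral cones. You instead carry out the matrix construction explicitly: diagonals carry the Max-level aggregation $M_w$, the off-diagonals between distinct Max indices carry the Case-2 (via-Random) constraints, and an auxiliary index per direct Min-to-Max edge absorbs the Case-1 constraints. The auxiliary-index device is a nice touch; it cleanly resolves the sign conflict one would otherwise hit when the same Min vertex $v$ is simultaneously the head of some $e'\in\outedge(w)$ (forcing a positively-signed coefficient $Q^{(v)}_{ww}$) and the tail of a direct edge into $w$ (which would force a negatively-signed one). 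Your version is self-contained and arguably more informative to a reader who does not have \cite{issac2016jsc} at hand; the paper's version is shorter because the real content has been delegated to that reference.

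Two small remarks. First, you should say a word about extending $\shapley$ from $\R^n$ to $\trop^n$: the proposition speaks of $\{x\in\trop^n : x\le\shapley(x)\}$, whereas \cref{def:shapley_from_graphs} only defines $\shapley$ on $\R^n$; the intended extension is simply to evaluate $M_w$ on $\trop^n$ with $\max(\emptyset)=\zero$, which is exactly what your formulation does, but it is worth noting. Second, your Case~2 encoding tacitly assumes the two targets $w_1,w_2$ of a Random vertex are distinct (you place the constraint on the \emph{off-diagonal} $Q_{w_1 w_2}$). Since the paper treats $\edges$ as a subset of $\vertices\times\vertices$, the graph is simple and the two outgoing edges of a Random vertex indeed have distinct heads, so this is fine under the stated hypotheses; but if one wanted to allow multi-edges (as the Zwick--Paterson construction might in principle produce), the degenerate case $w_1=w_2=w$ would need to be routed through an auxiliary index exactly as your Case~1, since it collapses to $M_w(x)\ge (-r_e)\tdot x_v$.
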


\begin{proof}
Consider the Markov chain introduced before \cref{def:shapley_from_graphs}, take a Min vertex $v \in [n]$ and an outgoing edge $e \in \outedge(v)$. Under the assumptions over the graph $\dgraph$, the absorbing states reachable from the head of $e$ form a set $\{w_e, w'_e\} \subset [m]$ of cardinality at most $2$ (we use the convention $w_e = w'_e$ if there is only one such absorbing state). Moreover, if $w_e \neq w'_e$, then $p_{w_e}^e = p_{w'_e}^e = 1/2$. Furthermore, observe that if $w \in [m]$ is a Max vertex and $e' \in \outedge(w)$ is an outgoing edge, then our assumptions imply that the head of $e'$ is a Min vertex. We denote it by $u_{e'}$. With this notation, we have:
\begin{align}\label{eq:shapleymetzler}
(\shapley(x))_{\vertex} = \min_{\edge \in \outedge(\vertex)}\Bigl( \payoff_{\edge} + \frac{1}{2}\bigl(\max_{\edgeII \in \outedge(\vertexII_{\edge})}( \payoff_{\edgeII} + x_{\vertexIII_{\edgeII}} ) + \max_{\edgeII \in \outedge(\vertexII'_{\edge})}( \payoff_{\edgeII} + x_{\vertexIII_{\edgeII}} )\bigr) \Bigr) \, .
\end{align}
The operators of the form given in \cref{eq:shapleymetzler} are studied in \cite[Sections~4.2 and~5.1]{issac2016jsc}. In particular, the claim follows from \cite[Lemma~52]{issac2016jsc}. 
\end{proof}

We want to show that every real tropical cone associated with a graph $\dgraph$ is a projection of a tropical Metzler spectrahedron. The idea of the proof is to take an arbitrary graph $\dgraph$ and transform it (by adding auxiliary states) into a graph $\trdgraph$ that fulfills the conditions of \cref{proposition:tropical_metzler_graphs}. Furthermore, our construction needs to preserve the projection. A key ingredient
is the following construction, which was used by Zwick and Paterson~\cite{zwick_paterson} to show the reduction from discounted games to simple stochastic games.

\begin{figure}[t]
\begin{center}
\begin{minipage}{0.45\textwidth}
\centering
\begin{tikzpicture}[scale=0.9,>=stealth',row/.style={draw,diamond,minimum size=0.5cm},col/.style={draw,rectangle,minimum size=0.5cm},av/.style={draw, circle,fill, inner sep = 0pt,minimum size = 0.2cm}]

\node[row] (i1) at (5, 5) {$v$};

\node (k1) at (3, 7) {$g$};
\node (k2) at (3, 3) {$h$};

\node (k4) at (7, 5) {$b$};
\node (k5) at (7, 3) {$c$};
\node (k6) at (7, 7) {$a$};

\draw[->] (k1) to (i1);
\draw[->] (k2) to (i1);

\draw[->] (i1) to (k4);
\draw[->] (i1) to (k5);
\draw[->] (i1) to (k6);

\end{tikzpicture}
\end{minipage}\hfill
\begin{minipage}{0.54\textwidth}
\begin{center}
\begin{tikzpicture}[scale=0.9,>=stealth',row/.style={draw,diamond,minimum size=0.5cm},col/.style={draw,rectangle,minimum size=0.5cm},av/.style={draw, circle,fill, inner sep = 0pt,minimum size = 0.2cm}]

\node[row] (i1) at (4, 5) {$w$};

\node[row] (i2) at (5.5, 5) {$u$};

\node (k1) at (3, 7) {$g$};
\node (k2) at (3, 3) {$h$};

\node (k4) at (7, 5) {$b$};
\node (k5) at (7, 3) {$c$};
\node (k6) at (7, 7) {$a$};

\draw[->] (k1) to (i1);
\draw[->] (k2) to (i1);

\draw[->] (i1) to (k5);
\draw[->] (i1) to (i2);
\draw[->] (i2) to (k4);
\draw[->] (i2) to (k6);

\end{tikzpicture}
\end{center}
\end{minipage}
\end{center}
\caption{Lowering the degree. Random vertices are depicted by diamonds.}\label{fig:lowering_degree}
\end{figure}
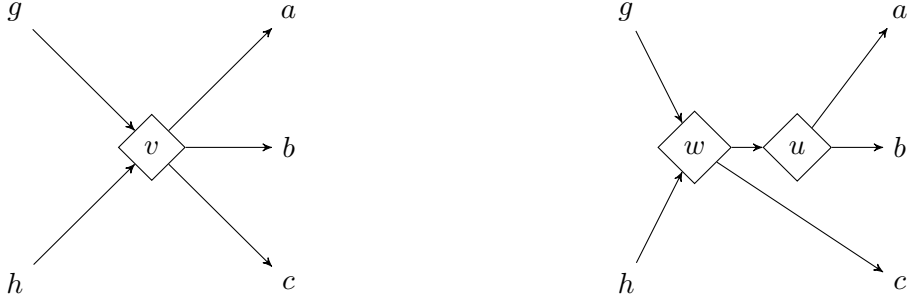

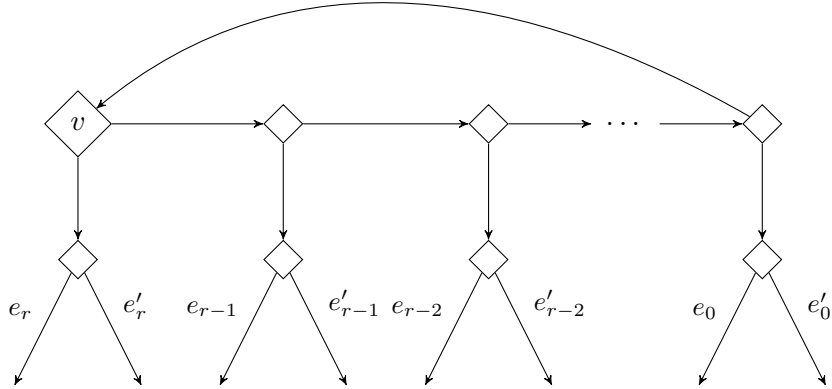
\begin{figure}[t]
\begin{center}
\centering
\begin{tikzpicture}[scale=0.9,>=stealth',row/.style={draw,diamond,minimum size=0.5cm},col/.style={draw,rectangle,minimum size=0.5cm},av/.style={draw, circle,fill, inner sep = 0pt,minimum size = 0.2cm}]

\node[row] (i1) at (1, 4) {$v$};
\node[row] (i2) at (4,4) {};
\node[row] (i3) at (7,4) {};
\node[row] (i4) at (11,4) {};

\node[row] (j1) at (1,2) {};
\node[row] (j2) at (4,2) {};
\node[row] (j3) at (7,2) {};
\node[row] (j4) at (11,2) {};

\node (k1) at (0, 0) {};
\node (k2) at (2, 0) {};
\node (k3) at (3, 0) {};
\node (k4) at (5, 0) {};
\node (k5) at (6, 0) {};
\node (k6) at (8, 0) {};
\node (k7) at (10, 0) {};
\node (k8) at (12, 0) {};

\coordinate (l1) at (8.5,4);
\coordinate (l2) at (9.5,4);

\draw[->] (i1) to (i2);
\draw[->] (i2) to (i3);

\draw[->] (i1) to (j1);
\draw[->] (i2) to (j2);
\draw[->] (i3) to (j3);
\draw[->] (i4) to (j4);

\draw[->] (j1) to node[above left, font=\small]{$\edge_{r}$} (k1);
\draw[->] (j1) to node[above right, font=\small]{$\edgeII_{r}$} (k2);
\draw[->] (j2) to node[above left, font=\small]{$\edge_{r-1}$} (k3);
\draw[->] (j2) to node[above right, font=\small]{$\edgeII_{r-1}$} (k4);
\draw[->] (j3) to node[above left, font=\small]{$\edge_{r-2}$} (k5);
\draw[->] (j3) to node[above right, font=\small]{$\edgeII_{r-2}$} (k6);
\draw[->] (j4) to node[above left, font=\small]{$\edge_{0}$} (k7);
\draw[->] (j4) to node[above right, font=\small]{$\edgeII_{0}$} (k8);

\draw[->] (i3) to (l1);
\draw[->] (l2) to (i4);

\draw[->] (i4) to[out = 150, in = 40] (i1);

\path (l1) -- node[auto=false]{\ldots} (l2);

\end{tikzpicture}
\end{center}
\caption{The construct of Zwick and Paterson.}\label{fig:zwick_paterson}
\end{figure}

\begin{lemma}[\cite{zwick_paterson}]\label{lemma:zwick_paterson}
One can transform an arbitrary graph $\dgraph$ into a graph $\trdgraph$ such that
\begin{itemize}
\item every Random vertex of $\trdgraph$ has exactly two outgoing edges and the probability distribution associated with these edges is equal to $(1/2,1/2)$
\item $\dgraph$ and $\trdgraph$ encode the same operator.
\end{itemize}
\end{lemma}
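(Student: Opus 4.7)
My plan is to perform two successive local transformations on the Random vertices of $\dgraph$, each one preserving the overall distribution over successor Max and Min vertices. Since the payoffs $\payoff_\edge$ on edges out of Max and Min vertices are never touched, and since the operator encoded by a graph depends only on those payoffs and on the conditional hitting probabilities $\prob^\edge_\vertex$, preserving these probabilities will guarantee that $\dgraph$ and $\trdgraph$ encode the same operator.

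\textbf{Phase 1} (reducing out-degree to two, as in \cref{fig:lowering_degree}). If a Random vertex $\vertex$ has $k \geq 3$ outgoing edges leading to $\vertexII_1, \dots, \vertexII_k$ with probabilities $\probII_1, \dots, \probII_k$, I replace it by a chain of fresh Random vertices $\vertex = \vertex^{(1)}, \vertex^{(2)}, \dots, \vertex^{(k-1)}$, where for $i < k-1$ the vertex $\vertex^{(i)}$ has one outgoing edge to $\vertexII_i$ with probability $\probII_i / (1 - \probII_1 - \dots - \probII_{i-1})$ and one to $\vertex^{(i+1)}$ with the complementary probability, while $\vertex^{(k-1)}$ points to $\vertexII_{k-1}$ and $\vertexII_k$ directly with the appropriate normalized rational probabilities. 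A telescoping computation shows that the probability of reaching $\vertexII_i$ from $\vertex$ in the new graph equals $\probII_i$. Iterating across all high-degree Random vertices yields an intermediate graph in which every Random vertex has out-degree exactly two, with probabilities that are still rational and lie in $(0,1)$.

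\textbf{Phase 2} (turning biased coins into fair coins, as in \cref{fig:zwick_paterson}). Each Random vertex $\vertex$ now carries two outgoing edges with probabilities $(p, 1-p)$ where $p \in \Q \cap (0,1)$. Since $p$ is rational, its binary expansion is eventually periodic, say $p = 0.c_1 c_2 \dots c_{r-s} \,\overline{c_{r-s+1} \dots c_{r}}$. I replace $\vertex$ by a chain of new Random vertices $\vertex_r, \vertex_{r-1}, \dots, \vertex_0$, each equipped with a fair coin flip whose outcomes either continue the chain to $\vertex_{i-1}$ or route to an auxiliary Random vertex performing a fair choice between two edges $\edge_i, \edgeII_i$ pointing to the two original successors of $\vertex$ in a pattern prescribed by the bit $c_i$. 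The continuation edge out of $\vertex_0$ loops back to $\vertex_{r-s+1}$, which encodes the period. A geometric-series argument shows that the probability of eventually being absorbed at the ``$1$-successor'' equals $\sum_{i \geq 1} c_i 2^{-i} = p$, and almost-sure termination holds because at every visit to some $\vertex_i$ there is probability $1/2$ of committing to an absorbing branch. By construction, every Random vertex of $\trdgraph$ has exactly two outgoing edges, each with probability $1/2$.

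\textbf{Verification and principal obstacle.} Correctness follows because Max and Min vertices, their payoffs, and the overall hitting probabilities $\prob^\edge_\vertex$ are preserved by each phase, so that formula~\eqref{eq:shapley_operator} evaluates identically in $\dgraph$ and $\trdgraph$; the three items of \cref{assumption:graph} for $\trdgraph$ are routine to check from the construction. The main technical difficulty lies in Phase 2: handling an arbitrary rational probability rather than merely a dyadic one forces the introduction of a cycle in $\trdgraph$ corresponding to the periodic part of the binary expansion, and one must carefully verify both exact probability matching and termination with probability one via the geometric-series computation.
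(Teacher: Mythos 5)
Your two-phase plan is the same as the paper's: first lower the out-degree of each Random vertex to two by a chain of fresh Random vertices, then replace each biased two-way coin by a gadget of fair coins. Phase~1 matches the paper's construction essentially verbatim. The place where you diverge is the encoding used in Phase~2: the paper writes $\probII = a/b$, fixes $r$ with $2^r \le b < 2^{r+1}$, encodes the \emph{finite} binary expansions of both $a$ and $b-a$ on a length-$(r+1)$ cycle, and lets all the leftover probability mass loop back to $v$ itself, obtaining $a/b$ by a renewal (geometric-series) argument; you instead encode the eventually-periodic binary expansion of $p$ itself and loop back to the beginning of the period, avoiding the renewal step. Both encodings are correct and both appear in the literature for the Zwick--Paterson reduction, so this is a legitimate variant rather than a different method.

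Two small issues with your write-up of Phase~2, though, are worth fixing. First, your ``auxiliary Random vertex performing a fair choice between two edges $\edge_i,\edgeII_i$'' is described ambiguously: if the auxiliary vertex flips a genuinely fair coin between two edges that then land according to the bit $c_i$, the probability of committing at step $i$ is halved, and the geometric series you invoke no longer sums to $p$; and if both edges of the auxiliary vertex point to the same target, the auxiliary vertex is vacuous and should simply be dropped. The intended gadget is that $\vertex_i$ itself is the binary Random vertex, with one edge continuing the chain and the other edge pointing directly to $w$ when $c_i=1$ and to $u$ when $c_i=0$; no auxiliary node is needed (the paper needs the auxiliary node $j_k$ precisely because it encodes \emph{two} digit strings $c_\bullet$ and $d_\bullet$ simultaneously). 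Second, your indices are off by one: you introduce $r+1$ vertices $\vertex_r,\dots,\vertex_0$ for $r$ explicit bits $c_1,\dots,c_r$, and the vertex names decrease while the bit indices increase, so the correspondence between ``vertex $\vertex_i$'' and ``bit $c_i$'' is not consistent. Neither issue is conceptual, but both need to be repaired for the probability computation to hold literally. Finally, your proposal omits the easy case handled in the paper where a Random vertex has out-degree one (it should simply be contracted away), which is needed for Phase~1's degree-lowering to be applicable.
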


Let us present the construction of Zwick and Paterson for the sake of completeness.

\begin{proof}
Fix a Random vertex $\vertex$ belonging to $\dgraph$. If this vertex has only one outgoing edge $\edge$, then we can delete $\vertex$ by joining all incoming edges $\inedge(\vertex)$ with the head of $\edge$. 

If $\vertex$ has at least three outgoing edges, then we enumerate the outgoing edges $\outedge(\vertex)$ by $\{\edge_{1}, \dots, \edge_{d} \}$, $d \ge 3$. Let us recall that the vertex $\vertex$ is equipped with a probability distribution $(\probII_{\edge_{s}})_{s = 1}^{d}$. We now perform the transformation presented on \cref{fig:lowering_degree}. We replace the vertex $\vertex$ by a pair of vertices $(\vertexII, \vertexIII)$ such that all incoming edges of $\vertex$ are connected to $\vertexII$ and $\vertexII$ has two outgoing edges: one going to the head of $\edge_{1}$ with probability $\probII_{\edge_{1}}$ and the other going to $\vertexIII$ with probability $1 - \probII_{\edge_{1}}$. Finally, $\vertexIII$ has $d - 1$ outgoing edges, the head of the $s$th outgoing edge is the head of $\edge_{s}$, and the associated probability is equal to $\probII_{\edge_{s}}/(1- \probII_{\edge_{1}})$. We repeat this transformation until we reach a graph in which all Random vertices have exactly two outgoing edges. 

If $\vertex$ has exactly two outgoing edges, then we denote the heads of these edges by $\vertexII$ and $\vertexIII$, and the associated probability distribution by $(\probII, 1 - \probII)$, where $\probII = a/b$, $a,b \in \N^{*}$ and $a < b$. If $\probII \neq 1/2$, then we take $r \ge 1$ such that $2^{r} \le b < 2^{r+1}$. We write $a$ and $b - a$ in binary, $a = \sum_{s = 0}^{r} c_{s} 2^{s}$ and $b - a = \sum_{s = 0}^{r} d_{s}2^{s}$ for $c_{s}, d_{s} \in \{0,1\}$. We now replace the outgoing edges of vertex $\vertex$ by the construct presented on \cref{fig:zwick_paterson}. In this construction, every Random node has exactly two outgoing edges and the associated probability distribution is equal to $(1/2,1/2)$. Furthermore, for any $s$, if $c_{s} = 1$, then head of $\edge_{s}$ is $\vertexII$ and if $c_{s} = 0$, then the head of $\edge_{s}$ is $\vertex$. Similarly, if $d_{s} = 1$, then head of $\edgeII_{s}$ is $\vertexIII$ and if $d_{s} = 0$, then the head of $\edgeII_{s}$ is $\vertex$. Suppose that the Markov chain reaches $\vertex$. Then, with probability $a/2^{r+2}$ the Markov chain goes to $\vertexII$ without coming back to $\vertex$. Similarly, with probability $(b-a)/2^{r+2}$ the Markov chain moves to $\vertexIII$ without coming back to $\vertex$. Therefore, the probability that the Markov chain finally reaches $\vertexII$ is equal to $a/b$ and the probability that it finally reaches $\vertexIII$ is equal to $(b - a)/b$. We repeat this procedure for every Random vertex of our graph.

To finish the proof, observe that the operations described above do not affect the associated semilinear monotone homogeneous operator.
\end{proof}

We now describe how to transform a graph given in \cref{lemma:zwick_paterson} into a graph that verifies the conditions of \cref{proposition:tropical_metzler_graphs}. More precisely, we transform the graph $\dgraph$ (which has $n$ Min vertices) into a graph $\trdgraph$ (which has $n'$ Min vertices, where $n' \geq n$) in such a way that the real tropical cone $\{x \in \R^{n} \colon x \le \shapley(x)\}$ associated with $\dgraph$ is a projection of the real tropical cone $\{x \in \R^{n'} \colon x \le \trshapley(x)\}$ associated with $\trdgraph$. 
The main difficulty here is that the operators arising from tropical (Metzler)
spectrahedra have a special structure, of ``Player I -- Chance -- Player II''
type,  to adopt a game theoretical terminology, meaning that arcs in the graph
connect Min vertices to Random vertices, Random vertices
to Max vertices, and Max vertices to Min vertices,
as is apparent from~\cref{eq:shapleymetzler}. By comparison, 
the Zwick--Paterson
construction (\cref{lemma:zwick_paterson}) leads to a graph with 
consecutive sequences of Random nodes. We shall see, however, that
the latter situation can be reduced from the former one
by applying, as a basic ingredient, two transformations, the validity of which
is expressed in \cref{lemma:first_transformation,lemma:second_transformation}.

The first transformation that we execute is presented on \cref{fig:changing_owners}. It is given as follows. Suppose that we are given a graph $\dgraph$. Denote $\Minvertices = [n]$ and $\Maxvertices = [m]$. Furthermore, let $\edges_{\Max} \subset \edges$ denote the set of all edges that have a Max vertex as their tail. Let $\shapley$ denote the operator associated with $\dgraph$. For every Max vertex $\vertex \in [m]$ and outgoing edge $\edge \in \outedge(\vertex)$, we insert a Min vertex between $\vertex$ and the head of $\edge$, as illustrated in \cref{fig:changing_owners}. In a similar way, for every Min vertex $\vertex$ and incoming edge $\edge \in \inedge(\vertex)$, we insert a Max vertex between the tail of $\edge$ and $\vertex$. We denote the transformed graph by $\trdgraph$. Observe that this graph fulfills \cref{assumption:graph}. 
We refer to the Min vertices in $\trdgraph$ as follows: the vertices that were present in $\dgraph$ are denoted by $[n]$, whereas the added Min vertices are denoted by $\edge \in \edges_{\Max}$.

\begin{figure}[t]
\begin{center}
\begin{minipage}{0.45\textwidth}
\centering
\begin{tikzpicture}[scale=0.9,>=stealth',row/.style={draw,circle,minimum size=0.5cm},col/.style={draw,rectangle,minimum size=0.5cm},av/.style={draw, circle,fill, inner sep = 0pt,minimum size = 0.2cm}]

\node[row] (i1) at (1.25, 2.5) {$1$};
\node[row] (i2) at (1.25, 1.25) {$2$};
\node[row] (i3) at (1.25, 0) {$3$};

\node[col] (j1) at (6.25, 2.5) {$1$};
\node[col] (j2) at (6.25, 1.25) {$2$};
\node[col] (j3) at (6.25, 0) {$3$};

\node (k1) at (2.25, 2.5) {$a$};
\node (k2) at (2.25, 1.25) {$b$};
\node (k3) at (2.25, 0) {$c$};

\node (k4) at (5.25, 2.5) {$f$};
\node (k5) at (5.25, 1.25) {$g$};
\node (k6) at (5.25, 0) {$h$};

\draw[->] (k1) to (i1);
\draw[->] (k2) to (i2);
\draw[->] (k3) to (i3);

\draw[->] (j1) to (k4);
\draw[->] (j2) to (k5);
\draw[->] (j3) to (k6);

\end{tikzpicture}
\end{minipage}\hfill
\begin{minipage}{0.54\textwidth}
\begin{center}
\begin{tikzpicture}[scale=0.9,>=stealth',row/.style={draw,circle,minimum size=0.5cm},col/.style={draw,rectangle,minimum size=0.5cm},av/.style={draw, circle,fill, inner sep = 0pt,minimum size = 0.2cm}]

\node[row] (i1) at (1, 2.5) {$1$};
\node[row] (i2) at (1, 1.25) {$2$};
\node[row] (i3) at (1, 0) {$3$};

\node[row] (i4) at (7, 2.5) {$1'$};
\node[row] (i5) at (7, 1.25) {$2'$};
\node[row] (i6) at (7, 0) {$3'$};

\node[col] (j1) at (8.5, 2.5) {$1$};
\node[col] (j2) at (8.5, 1.25) {$2$};
\node[col] (j3) at (8.5, 0) {$3$};

\node[col] (j4) at (2.5, 2.5) {$1'$};
\node[col] (j5) at (2.5, 1.25) {$2'$};
\node[col] (j6) at (2.5, 0) {$3'$};

\node (k1) at (4, 2.5) {$a$};
\node (k2) at (4, 1.25) {$b$};
\node (k3) at (4, 0) {$c$};

\node (k4) at (5.5, 2.5) {$f$};
\node (k5) at (5.5, 1.25) {$g$};
\node (k6) at (5.5, 0) {$h$};

\draw[->] (k1) to (j4);
\draw[->] (k2) to (j5);
\draw[->] (k3) to (j6);

\draw[->] (i4) to (k4);
\draw[->] (i5) to (k5);
\draw[->] (i6) to (k6);

\draw[->] (j4) to (i1);
\draw[->] (j5) to (i2);
\draw[->] (j6) to (i3);

\draw[->] (j1) to (i4);
\draw[->] (j2) to (i5);
\draw[->] (j3) to (i6);

\end{tikzpicture}
\end{center}
\end{minipage}
\end{center}
\caption{First transformation of a graph. Min vertices are presented by circles, Max vertices are presented by squares.}\label{fig:changing_owners}
\end{figure}
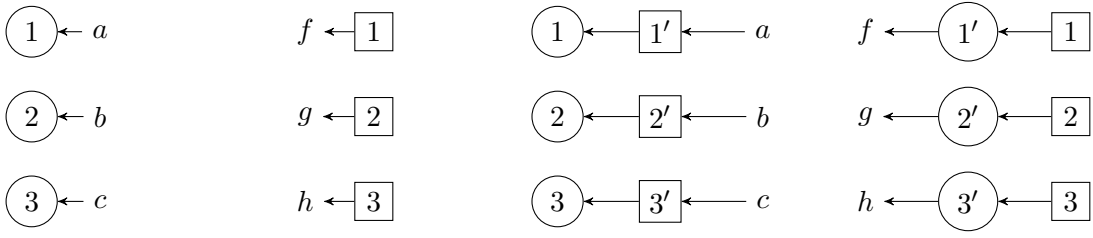

\begin{lemma}\label{lemma:first_transformation}
Suppose that the operator $\trshapley$ is obtained from $\shapley$ by the first transformation above. 
Then, the real tropical cone $\{x \in \R^{n} \colon x \le \shapley(x)\}$ is the projection of the real tropical cone $\{(x,x') \in \R^{n} \times \R^{\card{\edges_{\Max}}} \colon (x,x') \le \trshapley(x,x')\}$.
\end{lemma}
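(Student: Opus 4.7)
The plan is to first derive explicit expressions for the components of the new operator $\trshapley$, and then verify the projection identity by direct substitution. For an original Min vertex $v \in [n]$, the outgoing edges of $v$ coincide in $\dgraph$ and $\trdgraph$, so following $e \in \outedge_\dgraph(v)$ takes one through the unchanged Random layer and reaches the same original Max vertex $w \in [m]$ with probability $p^e_w$; here the hypothesis that every path between two Min vertices contains a Max prevents the trajectory from being redirected to any new Max $W_{e''}$, because the redirected edges are precisely those entering original Min vertices, which are not traversed on the Min-to-Max leg. Each original Max $w$ now has outgoing edges to the new Min vertices $M_{e'}$ indexed by $e' \in \outedge_\dgraph(w)$, and by carrying the original payoff $r_{e'}$ on the edge $w \to M_{e'}$ one obtains
\[
\trshapley_v(x,x') = \min_{e \in \outedge_\dgraph(v)} \Bigl( r_e + \sum_{w \in [m]} p^e_w \max_{e' \in \outedge_\dgraph(w)} (r_{e'} + x'_{e'}) \Bigr).
\]
For a new Min vertex $M_{e'}$ with $e' \in \edges_{\Max}$, its unique outgoing edge (with zero payoff) leads the Random walk to the original head of $e'$; the hypothesis on paths between Max vertices ensures that this walk would terminate at some original Min $u$ in $\dgraph$, and in $\trdgraph$ the terminal transitions are instead intercepted by the new Max vertices $W_{e''}$, each of which has a unique outgoing edge to $u = \mathrm{head}_\dgraph(e'')$. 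A straightforward total-probability computation recovers the original absorption probabilities $p^{e'}_u$ through the $W_{e''}$ layer and yields
\[
\trshapley_{e'}(x,x') = \sum_{u \in [n]} p^{e'}_u x_u.
\]
I expect this latter derivation to be the technically central step of the proof.

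With these formulas in hand, both inclusions become routine. For the forward inclusion, given $x \le \shapley(x)$, I would set $x'_{e'} \coloneqq \sum_u p^{e'}_u x_u$, which makes $\trshapley_{e'}(x,x') = x'_{e'}$ by construction, while substitution into the expression for $\trshapley_v$ reproduces $\shapley_v(x)$, yielding $x_v \le \shapley_v(x) = \trshapley_v(x,x')$. For the reverse inclusion, the assumed inequalities $x'_{e'} \le \trshapley_{e'}(x,x') = \sum_u p^{e'}_u x_u$ let me upper-bound $\max_{e'}(r_{e'} + x'_{e'}) \le \max_{e'}(r_{e'} + \sum_u p^{e'}_u x_u)$, and monotonicity of expectation and of $\min$ propagates this to $\trshapley_v(x,x') \le \shapley_v(x)$, so that $x_v \le \shapley_v(x)$.

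A minor piece of bookkeeping is the case where an original edge $e'$ joins a Max vertex directly to a Min vertex, since then both insertions apply to $e'$ and must be composed into a single alternating path $w \to M_{e'} \to W_{e'} \to u$; I would handle this by assigning zero payoffs to the auxiliary edges, which keeps $\trshapley_{e'}(x,x') = x_u = p^{e'}_u x_u$ intact and leaves the rest of the argument unchanged. The three conditions of \cref{assumption:graph} for $\trdgraph$ follow at once from those of $\dgraph$, so $\trshapley$ is genuinely a semilinear monotone homogeneous operator by \cref{lemma:shapley_from_graphs}.
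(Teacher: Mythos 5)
Your proposal is correct and follows essentially the same approach as the paper's proof: you derive the explicit expressions for the transformed operator $\trshapley$ on both the original Min coordinates and the new coordinates indexed by $\edges_{\Max}$, then obtain the forward inclusion by the substitution $x'_{e'} = \sum_u p^{e'}_u x_u$ and the reverse inclusion by monotonicity. The paper records the same two formulas and the same substitution, merely stating the monotonicity step more tersely and leaving the direct Max-to-Min edge case implicit, which you handle explicitly.
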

\begin{proof}
Denote the operator $\shapley$ as
\[
\forall \vertex \in [n], \ (\shapley(x))_{\vertex} = \min_{\edge \in \outedge(\vertex)}\Bigl( \payoff_{\edge} + \sum_{\vertexII \in [m]}\prob^{\edge}_{\vertexII}\max_{\edgeII \in \outedge(\vertexII)}\bigl( \payoff_{\edgeII} + \sum_{\vertexIII \in [n]} \prob^{\edgeII}_{\vertexIII}x_{\vertexIII} \bigr) \Bigr) \, .
\]
Observe that for every $\vertex \in [n]$ we have
\[
(\trshapley(x,x'))_{\vertex} = \min_{\edge \in \outedge(\vertex)}\Bigl( \payoff_{\edge} + \sum_{\vertexII \in [m]} \prob^{\edge}_{\vertexII}\max_{\edgeII \in \outedge(\vertexII)}\bigl( \payoff_{\edgeII} + x_{\edgeII} \bigr) \Bigr) \, .
\]
Furthermore, for every $\edge \in \edges_{\Max}$ we have
\[
(\trshapley(x,x'))_{\edge} = \sum_{\vertex \in [n]} \prob^{\edge}_{\vertex}x_{\vertex} \, .
\]
Therefore, if $x \le \shapley(x)$ and for every $\edge \in \edges_{\Max}$ we set $x_{\edge} = \sum_{\vertex \in [n]} \prob^{\edge}_{\vertex}x_{\vertex}$, then for every $\vertex \in [n]$ we have $x_{\vertex} \le (\shapley(x))_{\vertex} = (\trshapley(x,x'))_{\vertex}$ and for every $\edge \in \edges_{\Max}$ we have $x_{\edge} = (\trshapley(x,x'))_{\edge}$. Conversely, if $(x,x') \le \trshapley(x,x')$, then we have $x_{\vertex} \le (\trshapley(x,x'))_{\vertex} \le (\shapley(x))_{\vertex}$ for every $\vertex \in [n]$.
\end{proof}

The second transformation is given as follows. As previously, suppose that we are given a graph $\dgraph$. Denote $\Minvertices = [n]$ and $\Maxvertices = [m]$. Furthermore, let $\edges_{\Max} \subset \edges$ denote the set of all edges that have a Max vertex as their tail. Let $\shapley$ denote the operator associated with $\dgraph$. Moreover, suppose that $\dgraph$ is such that every edge $\edge \in \edges_{\Max}$ has a Min vertex as its head. Suppose that $\edge^{*} \in \edges$ is a fixed edge in $\dgraph$ that connects two Random vertices. We add a Max vertex $m+1$ and a Min vertex $n + 1$ onto $\edge^{*}$ as presented on \cref{fig:adding_states}. We denote the transformed graph by $\trdgraph$. Since every edge $\edge \in \edges_{\Max}$ has a Min vertex as its head, every path that joins a Max vertex with a Min vertex has length $1$. In particular, $\edge^{*}$ does not belong to any such path. Hence, the transformed graph $\trdgraph$ fulfills \cref{assumption:graph}. 

\begin{figure}[t]
\begin{center}
\begin{minipage}{0.45\textwidth}
\centering
\begin{tikzpicture}[scale=0.9,>=stealth',row/.style={draw,circle,minimum size=0.5cm},col/.style={draw,rectangle,minimum size=0.5cm},av/.style={draw, circle,fill, inner sep = 0pt,minimum size = 0.2cm}]

\node (j1) at (3.25, 1.25) {$a$};
\node (j2) at (5.25, 1.25) {$b$};

\draw[->] (j1) to node[above = 0ex, font = \small] {$e^{*}$} (j2);

\end{tikzpicture}
\end{minipage}\hfill
\begin{minipage}{0.54\textwidth}
\begin{center}
\begin{tikzpicture}[scale=0.9,>=stealth',row/.style={draw,circle,minimum size=0.5cm},col/.style={draw,rectangle,minimum size=0.5cm},av/.style={draw, circle,fill, inner sep = 0pt,minimum size = 0.2cm}]

\node (i1) at (3.25,1.25) {$a$};
\node[col]  (k1) at (5.5, 1.25) {$m+1$};
\node[row] (i2) at (7.5, 1.25) {$n+1$};

\node (j2) at (9.5, 1.25) {$b$};

\draw[->] (i1) to (k1);
\draw[->] (k1) to node {} (i2);

\draw[->] (i2) to node {} (j2);
\end{tikzpicture}
\end{center}
\end{minipage}
\end{center}
\caption{Second transformation of a graph.}\label{fig:adding_states}
\end{figure}
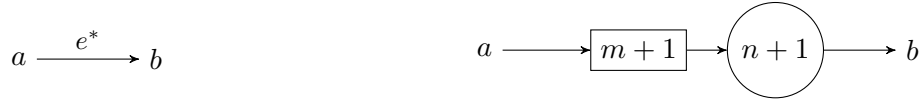

\begin{lemma}\label{lemma:second_transformation}
Suppose that the operator $\trshapley$ is obtained from $\shapley$ by the second transformation above. 
Then, the real tropical cone $\{x \in \R^{n} \colon x \le \shapley(x)\}$ is a projection of the real tropical cone $\{(x,x_{n+1}) \in \R^{n+1} \colon (x,x_{n+1}) \le \trshapley(x,x_{n+1})\}$.
\end{lemma}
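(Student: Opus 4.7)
The plan is to track how inserting the new Max vertex $m+1$ and the new Min vertex $n+1$ alters the Markov chain underlying $\trdgraph$. Since all Max and Min vertices are absorbing, and the new edge $(a,m+1)$ inherits the transition probability of $\edge^{*}$, the new chain coincides with the old one up until the moment the old chain would have taken $\edge^{*}$, at which point it is instead absorbed at $m+1$. By a strong Markov decomposition, if $\sigma_\edge$ denotes the probability that the old chain starting from the head of $\edge$ ever takes $\edge^{*}$ before absorption, and $p^{\edge^{*}}_\vertexII$ is the old probability to reach a Max vertex $\vertexII \in [m]$ from the head of $\edge^{*}$ (that is, from $b$), then the new hitting probabilities $\tilde{p}$ satisfy
\[
\tilde{p}^\edge_\vertexII = p^\edge_\vertexII - \sigma_\edge\, p^{\edge^{*}}_\vertexII \quad (\vertexII \in [m])\,, \qquad \tilde{p}^\edge_{m+1} = \sigma_\edge\,.
\]
Moreover, since every edge outgoing from an old Max vertex still leads directly to an old Min vertex, the Max-player contribution $M_\vertexII(x) := \max_{\edgeII \in \outedge(\vertexII)}(\payoff_{\edgeII} + \sum_{\vertexIII \in [n]} \prob^{\edgeII}_\vertexIII x_\vertexIII)$ is identical in both graphs for $\vertexII \in [m]$, and the unique outgoing edge of $m+1$ (payoff $0$, going to $n+1$) contributes simply $x_{n+1}$ to the Max-stage at $m+1$.

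Combining these ingredients, I obtain, for $\vertex \in [n]$,
\[
(\trshapley(x,x_{n+1}))_\vertex = \min_{\edge \in \outedge(\vertex)}\Bigl(\payoff_\edge + \sum_{\vertexII \in [m]} \tilde{p}^\edge_\vertexII\, M_\vertexII(x) + \sigma_\edge\, x_{n+1}\Bigr)\,,
\]
and, writing $\rho_b$ for the probability in the new chain starting from $b$ of eventually being absorbed at $m+1$,
\[
(\trshapley(x,x_{n+1}))_{n+1} = (1-\rho_b)\sum_{\vertexII \in [m]} p^{\edge^{*}}_\vertexII\, M_\vertexII(x) + \rho_b\, x_{n+1}\,.
\]

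For the inclusion of $\{x \le \shapley(x)\}$ into the projection, given $x \le \shapley(x)$ I choose $x_{n+1} := \sum_{\vertexII \in [m]} p^{\edge^{*}}_\vertexII\, M_\vertexII(x)$: using the identity $\tilde{p}^\edge_\vertexII + \sigma_\edge p^{\edge^{*}}_\vertexII = p^\edge_\vertexII$, the first formula reduces to $(\trshapley(x,x_{n+1}))_\vertex = (\shapley(x))_\vertex \ge x_\vertex$, while the second collapses to $(\trshapley(x,x_{n+1}))_{n+1} = x_{n+1}$. For the reverse inclusion, given $(x,x_{n+1}) \le \trshapley(x,x_{n+1})$, the inequality at coordinate $n+1$ rearranges to $(1-\rho_b)\, x_{n+1} \le (1-\rho_b)\sum_{\vertexII} p^{\edge^{*}}_\vertexII M_\vertexII(x)$. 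The main (though minor) point is that $\rho_b < 1$ strictly: since the Markov chain is absorbed in $\Minvertices \cup \Maxvertices$ almost surely, $\rho_b = 1$ would force the chain to return to $b$ (and thus to $a$) infinitely often by the strong Markov property, contradicting almost sure absorption. Dividing by $1-\rho_b$ then yields $x_{n+1} \le \sum_\vertexII p^{\edge^{*}}_\vertexII\, M_\vertexII(x)$, and reinjecting into the expression for $(\trshapley)_\vertex$ gives $x_\vertex \le (\trshapley(x,x_{n+1}))_\vertex \le (\shapley(x))_\vertex$, as needed.
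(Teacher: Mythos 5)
Your proof is correct and, in fact, more careful than the paper's. The paper's argument decomposes at the first \emph{visit} to the head of $\edge^*$ (its $\prob^\edge_{\edge^*}$) and then asserts that $(\trshapley(x,x_{n+1}))_{n+1}$ does not depend on $x_{n+1}$; both steps implicitly require that the Markov chain started at $b$ can never come back to $a$, and that the only way to reach $b$ is to traverse $\edge^*$. After the first transformation, the Random vertices sit inside Zwick--Paterson gadgets which contain cycles, so these assumptions can fail when the second transformation is applied to an edge inside such a gadget. You instead decompose at the first \emph{traversal} of $\edge^*$ (your $\sigma_\edge$), which is the event that actually corresponds to absorption at $m+1$ in $\trdgraph$, and you track the resulting coefficient $\rho_b$ of $x_{n+1}$ in $(\trshapley(x,x_{n+1}))_{n+1}$. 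This produces the correct identity $\tilde{\prob}^{\edge'}_\vertexII=(1-\rho_b)\prob^{\edge^*}_\vertexII$ and leads to the choice $x_{n+1}:=\sum_\vertexII \prob^{\edge^*}_\vertexII M_\vertexII(x)$, which makes coordinate $n+1$ an exact fixed point; the paper's choice $(\trshapley(x,0))_{n+1}$ would not give a subfixed point when $\rho_b>0$ and $x_{n+1}<0$. Your observation $\rho_b<1$ then closes the converse direction. One small phrasing point: in proving $\rho_b<1$ it is cleaner to reason on the old chain --- $\rho_b=1$ would mean the old chain from $b$ almost surely traverses $\edge^*$ and hence returns to $b$ indefinitely, contradicting almost-sure absorption (which is guaranteed by \cref{item:graph3}).
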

\begin{proof}
Denote the operator $\shapley$ as
\[
\forall \vertex \in [n], \ (\shapley(x))_{\vertex} = \min_{\edge \in \outedge(\vertex)}\Bigl( \payoff_{\edge} + \sum_{\vertexII \in [m]}\prob^{\edge}_{\vertexII}\max_{\edgeII \in \outedge(\vertexII)}\bigl( \payoff_{\edgeII} + \sum_{\vertexIII \in [n]} \prob^{\edgeII}_{\vertexIII}x_{\vertexIII} \bigr) \Bigr) \, .
\]
Let us introduce the following notation. For every $\edge \in \edges$, we denote by $\prob^{\edge}_{\edge{*}}$ the conditional probability that the Markov chain reaches the head of $\edge^{*}$ from the head of $\edge$. Moreover, for every Max vertex $\vertexII$ and every edge $\edge \in \edges$, we denote by $\prob^{\edge}_{\vertexII2}$ the conditional probability that the Markov chain reaches $\vertexII$ from the head of $\edge$ without passing by the head of $\edge^{*}$. Thus, for every Min vertex $\vertexII$ and every $\edge \in \edges$ we have $\prob^{\edge}_{\vertexII} = \prob^{\edge}_{\edge{*}}\prob^{\edge^{*}}_{\vertexII} + \prob^{\edge}_{\vertexII 2}$. 
Therefore, for any $x_{n+1} \in \R$ we have
\[
(\trshapley(x, x_{n+1}))_{n+1} = \sum_{\vertexII \in [m]}\prob^{\edge^{*}}_{\vertexII}\max_{\edgeII \in \outedge(\vertexII)} \bigl( \payoff_{\edgeII} + \sum_{\vertexIII \in [n]} \prob^{\edgeII}_{\vertexIII}x_{\vertexIII} \bigr) = (\trshapley(x, 0))_{n+1}
\]
and
\begin{equation*}
\begin{aligned}
(\shapley(x))_{\vertex} &= \min_{\edge \in \outedge(\vertex)}\Bigl( \payoff_{\edge} + \sum_{\vertexII \in [m]}(\prob^{\edge}_{\edge^{*}}\prob^{\edge^{*}}_{\vertexII} + \prob^{\edge}_{\vertexII 2})\max_{\edgeII \in \outedge(\vertexII)}\bigl( \payoff_{\edgeII} + \sum_{\vertexIII \in [n]} \prob^{\edgeII}_{\vertexIII}x_{\vertexIII} \bigr) \Bigr) \\
&= \min_{\edge \in \outedge(\vertex)}\Bigl( \payoff_{\edge} + \prob^{\edge}_{\edge^{*}}(\trshapley(x, 0))_{n+1} + \sum_{\vertexII \in [m]}\prob^{\edge}_{\vertexII 2}\max_{\edgeII \in \outedge(\vertexII)}\bigl( \payoff_{\edgeII} + \sum_{\vertexIII \in [n]} \prob^{\edgeII}_{\vertexIII}x_{\vertexIII} \bigr) \Bigr) \, .
\end{aligned}
\end{equation*}

Furthermore, for every $\vertex \in [n]$ we have
\[
(\trshapley(x, x_{n+1}))_{\vertex} = \min_{\edge \in \outedge(\vertex)}\Bigl( \payoff_{\edge} + \prob^{\edge}_{\edge^{*}}x_{n+1} + \sum_{\vertexII \in [m]}\prob^{\edge}_{\vertexII 2}\max_{\edgeII \in \outedge(\vertexII)}\bigl( \payoff_{\edgeII} + \sum_{\vertexIII \in [n]} \prob^{\edgeII}_{\vertexIII}x_{\vertexIII} \bigr) \Bigr) \, .
\]
Therefore, if $x \le \shapley(x)$ and we set $x_{n+1} = (\trshapley(x,0))_{n+1}$, then $(x,x_{n+1}) \le \trshapley(x,x_{n+1})$. Conversely, if $(x,x_{n+1}) \le \trshapley(x,x_{n+1})$, then $x_{n+1} \le (\trshapley(x,0))_{n+1}$ and hence $x_{\vertex} \le (\shapley(x))_{\vertex}$ for all $\vertex \in [n]$.
\end{proof}

\begin{proposition}\label{lemma:main_stratum_cone}
Every closed semilinear real tropical cone is a projection of a real tropical Metzler spectrahedron.
\end{proposition}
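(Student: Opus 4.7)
The plan is to realize $\slin$ as the sub-fixed-point set of a semilinear monotone homogeneous operator, encode that operator by a directed graph, and then transform the graph until it fits the hypotheses of \cref{proposition:tropical_metzler_graphs}. First, I would use \cref{tropical_cones_from_operators} to write $\slin = \{x \in \R^n \colon x \le \shapley(x)\}$ for some such operator $\shapley$, and then invoke the lemma following \cref{lemma:shapley_from_graphs} to obtain a directed graph $\dgraph_0$ satisfying \cref{assumption:graph} and encoding $\shapley$.

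The core of the proof is a three-phase massage of $\dgraph_0$. Phase one applies the Zwick--Paterson construction of \cref{lemma:zwick_paterson}, producing a graph $\dgraph_1$ that encodes the same operator $\shapley$ but in which every Random vertex has exactly two outgoing edges with probability distribution $(1/2, 1/2)$; consequently the associated real tropical cone is still $\slin$. Phase two applies the first transformation of \cref{lemma:first_transformation} to $\dgraph_1$, yielding $\dgraph_2$ in which every edge leaving a Max vertex terminates at a Min vertex; this transformation has the crucial side effect of converting every Random-to-Min edge of $\dgraph_1$ into a Random-to-Max edge, since a fresh Max vertex is inserted before each Min endpoint. By \cref{lemma:first_transformation}, $\slin$ is then exhibited as a projection of the real tropical cone of $\dgraph_2$. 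Phase three iteratively applies the second transformation of \cref{lemma:second_transformation} to eliminate each remaining edge between two Random vertices, composing the resulting projections. It is routine to check that throughout phases two and three, both the $(1/2,1/2)$ structure of Random vertices and the precondition of \cref{lemma:second_transformation} are preserved: the transformations add only Min and Max vertices (never Random ones), and the only newly created Max-originating edges have Min heads. The final graph $\dgraph_3$ therefore satisfies both hypotheses of \cref{proposition:tropical_metzler_graphs}, so the tropical cone $\{x \in \trop^{n'} \colon x \le \shapley_3(x)\}$ is a tropical Metzler spectrahedral cone whose main stratum is a projection of $\slin$.

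The main obstacle I expect is that a tropical Metzler spectrahedral cone is never real, since it always contains the point $(\zero,\dots,\zero)$; the last step must therefore exhibit the main stratum of the cone as the projection of a genuine real tropical Metzler spectrahedron, as opposed to a spectrahedral cone. I would address this by exploiting the homogenization/dehomogenization correspondence between Metzler spectrahedral cones and affine Metzler spectrahedra recalled in \cite[Lemma~20]{tropical_spectrahedra}, and composing the resulting projection with the earlier ones to produce $\slin$ as a projection of a real tropical Metzler spectrahedron.
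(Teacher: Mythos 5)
Your outline coincides with the paper's proof up through the construction of the transformed graph and the identification, via \cref{proposition:tropical_metzler_graphs}, of the resulting subfixed-point set as a tropical Metzler spectrahedral cone; you also correctly spotted the final obstruction, namely that this cone always contains the point $(\zero,\dots,\zero)$ and so cannot itself be the real tropical Metzler spectrahedron that the statement requires.

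The repair you propose does not close this gap. The homogenization/dehomogenization correspondence of \cite[Lemma~20]{tropical_spectrahedra} passes between the cone $\spectra(\zero \mid Q^{(1)},\dots,Q^{(N)})$ and the affine spectrahedron obtained by fixing one coordinate at $0$, but that operation neither forces the remaining coordinates to be finite (so the dehomogenized affine spectrahedron still need not be real) nor recovers the full real tropical cone $\slin$ under a projection: fixing $x_1 = 0$ only yields the slice $\slin \cap \{x_1 = 0\}$, and undoing that slice requires a tropical shift, which is an affine map and not a projection. The paper instead keeps the entire cone and appends fresh coordinates $y \in \trop^{n+n'}$ subject to the constraints $(x,x')_k + y_k \ge 0$ for every $k$. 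Each such inequality is a $2\times 2$ tropical Metzler block, so the new set is still an (affine) tropical Metzler spectrahedron; these inequalities force every coordinate to be finite, which makes the spectrahedron real, and they are harmless under the projection onto $(x,x')$, since for any real subfixed point $(x,x')$ one may take $y = -(x,x')$. This explicit construction is the missing closing step of your argument.
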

\begin{proof}
Take any closed semilinear real tropical cone $\slin = \{x \in \R^{n} \colon x \le \shapley(x) \}$. Let $\dgraph$ denote the graph associated with $\shapley$. By \cref{lemma:zwick_paterson} we may suppose that the probabilities associated with Random vertices in $\dgraph$ are equal to $1/2$. We perform the first transformation on the graph $\dgraph$. Denote the transformed graph by $\dgraph_{1}$. We perform the second transformation on every edge in $\dgraph_{1}$ that joins two Random vertices. Denote the transformed graph by $\trdgraph$ and the associated operator as $\trshapley$. By \cref{lemma:first_transformation,lemma:second_transformation}, the real tropical cone $\{x \in \R^{n} \colon x \le \shapley(x) \}$ is the projection of the real tropical cone $\{(x,x') \in \R^{n} \times \R^{n'} \colon (x,x') \le \trshapley(x,x') \}$. Furthermore, $\trdgraph$ fulfills the conditions of \cref{proposition:tropical_metzler_graphs}. Therefore, the set $\slin' = \{(x,x') \in \trop^{n} \times \trop^{n'} \colon (x,x') \le \trshapley(x,x') \}$ is a tropical Metzler spectrahedral cone. Finally, we take the set 
\begin{equation*}
\begin{aligned}
\slin'' &= \{(x,x', y) \in \trop^{n} \times \trop^{n'} \times \trop^{n + n'} \colon (x,x') \le \trshapley(x,x') \land (x,x') + y \ge 0 \} \\
&= \{(x,x', y) \in \R^{n} \times \R^{n'} \times \R^{n + n'} \colon (x,x') \le \trshapley(x,x') \land (x,x') + y \ge 0 \} \, .
\end{aligned}
\end{equation*}
The set $\slin''$ is a real tropical Metzler spectrahedron. Moreover, $\slin$ is a projection of $\slin''$.
\end{proof}

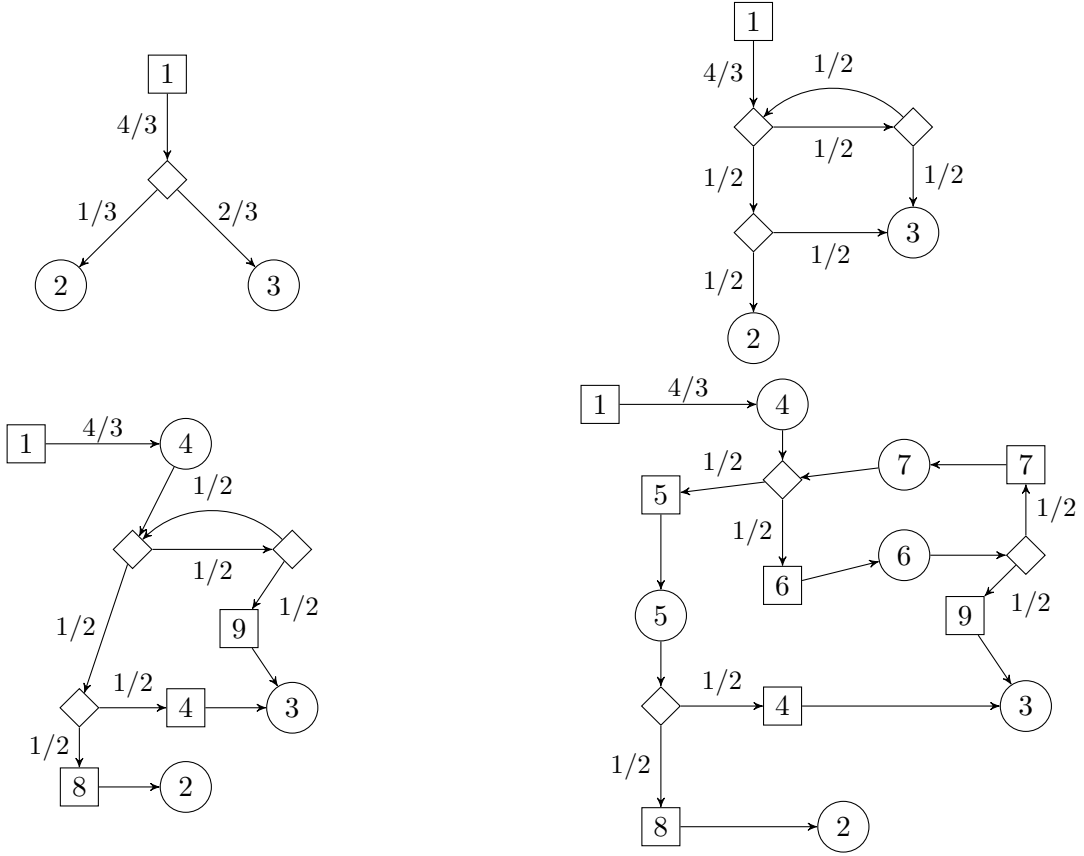
\begin{figure}[t]
\centering
\begin{minipage}{0.45\textwidth}
\centering
\begin{tikzpicture}[scale=0.7,>=stealth',max/.style={draw,rectangle,minimum size=0.5cm},min/.style={draw,circle,minimum size=0.5cm},av/.style={draw,diamond,minimum size=0.5cm}] 

\node[max] (max1) at (2, 4) {$1$};

\node[min] (min3) at (4, 0) {$3$};
\node[min] (min2) at (0, 0) {$2$};

\node[av] (av23) at (2, 2){};

\draw[->] (max1) to node[left, font=\small]{$4/3$} (av23);

\draw[->] (av23) to node[above left = -1ex, font=\small]{$1/3$} (min2);
\draw[->] (av23) to node[above right = -1ex, font=\small]{$2/3$} (min3);

\end{tikzpicture}
\end{minipage}\hfill
\begin{minipage}{0.45\textwidth}
\centering

\begin{tikzpicture}[scale=0.7,>=stealth',max/.style={draw,rectangle,minimum size=0.5cm},min/.style={draw,circle,minimum size=0.5cm},av/.style={draw,diamond,minimum size=0.5cm}] 

\node[max] (max1) at (2, 4) {$1$};

\node[min] (min3) at (5, 0) {$3$};
\node[min] (min2) at (2, -2) {$2$};

\node[av] (av23) at (2, 2){};
\node[av] (av4) at (2, 0){};
\node[av] (av5) at (5, 2){};

\draw[->] (max1) to node[left, font=\small]{$4/3$} (av23);

\draw[->] (av23) to node[left, font=\small]{$1/2$} (av4);
\draw[->] (av23) to node[below, font=\small]{$1/2$} (av5);
\draw[->] (av5) to node[right, font=\small]{$1/2$} (min3);

\draw[->] (av4) to node[left, font=\small]{$1/2$} (min2);
\draw[->] (av4) to node[below, font=\small]{$1/2$} (min3);

\draw[->] (av5) to[in = 45, out = 135] node[above, font=\small]{$1/2$} (av23);

\end{tikzpicture}
\end{minipage}\hfill

\begin{minipage}{0.45\textwidth}
\centering

\begin{tikzpicture}[scale=0.7,>=stealth',max/.style={draw,rectangle,minimum size=0.5cm},min/.style={draw,circle,minimum size=0.5cm},av/.style={draw,diamond,minimum size=0.5cm}] 

\node[max] (max1) at (0, 4) {$1$};

\node[min] (min4) at (3, 4) {$4$};
\node[max] (max2) at (1, -2.5) {$8$};
\node[max] (max3) at (4, 0.5) {$9$};

\node[max] (max4) at (3, -1) {$4$};

\node[min] (min3) at (5, -1) {$3$};
\node[min] (min2) at (3, -2.5) {$2$};

\node[av] (av23) at (2, 2){};
\node[av] (av4) at (1, -1){};
\node[av] (av5) at (5, 2){};

\draw[->] (max1) to node[above=-0.7ex, font=\small]{$4/3$} (min4);
\draw[->] (min4) to (av23);

\draw[->] (max2) to (min2);
\draw[->] (max3) to (min3);

\draw[->] (av5) to node[below right, font=\small]{$1/2$} (max3);

\draw[->] (av4) to node[left, font=\small]{$1/2$} (max2);
\draw[->] (av4) to node[above, font=\small]{$1/2$} (max4);
\draw[->] (max4) to (min3);

\draw[->] (av23) to node[left, font=\small]{$1/2$} (av4);
\draw[->] (av23) to node[below, font=\small]{$1/2$} (av5);

\draw[->] (av5) to[in = 45, out = 135] node[above, font=\small]{$1/2$} (av23);

\end{tikzpicture}
\end{minipage}\hfill
\begin{minipage}{0.45\textwidth}
\centering
\begin{tikzpicture}[scale=0.8,>=stealth',max/.style={draw,rectangle,minimum size=0.5cm},min/.style={draw,circle,minimum size=0.5cm},av/.style={draw,diamond,minimum size=0.5cm}] 

\node[max] (max1) at (-2, 5) {$1$};

\node[min] (min3) at (5, 0) {$3$};
\node[min] (min2) at (2, -2) {$2$};

\node[min] (min4) at (1, 5) {$4$};

\node[max] (max2) at (-1, -2) {$8$};
\node[max] (max3) at (4, 1.5) {$9$};
\node[max] (max4) at (1, 0) {$4$};

\node[max] (max5) at (-1, 3.5) {$5$};
\node[min] (min5) at (-1, 1.5) {$5$};

\node[max] (max6) at (1, 2) {$6$};
\node[min] (min6) at (3, 2.5) {$6$};

\node[min] (min7) at (3, 4) {$7$};
\node[max] (max7) at (5, 4) {$7$};

\node[av] (av23) at (1, 3.75){};
\node[av] (av4) at (-1, 0){};
\node[av] (av5) at (5, 2.5){};

\draw[->] (max1) to node[above=-0.7ex, font=\small]{$4/3$} (min4);
\draw[->] (min4) to (av23);

\draw[->] (av23) to node[above, font=\small]{$1/2$} (max5);
\draw[->] (av23) to node[left, font=\small]{$1/2$} (max6);
\draw[->] (av5) to node[below right, font=\small]{$1/2$} (max3);

\draw[->] (av4) to node[left, font=\small]{$1/2$} (max2);
\draw[->] (av4) to node[above, font=\small]{$1/2$} (max4);

\draw[->] (max5) to (min5);
\draw[->] (min5) to (av4);
\draw[->] (max6) to (min6);
\draw[->] (min6) to (av5);
\draw[->] (max7) to (min7);
\draw[->] (min7) to (av23);

\draw[->] (max2) to (min2);
\draw[->] (max3) to (min3);
\draw[->] (max4) to (min3);

\draw[->] (av5) to node[right, font=\small]{$1/2$} (max7);

\end{tikzpicture}
\end{minipage}

\caption{The transformation of \cref{lemma:zwick_paterson,lemma:first_transformation,lemma:second_transformation} applied to one Random vertex from the graph presented in \cref{fig:graph}. Top left: the initial graph. Top right: the graph after the application of \cref{lemma:zwick_paterson}. Bottom left: the graph after the application of \cref{lemma:first_transformation}. Bottom right: the graph after the application of \cref{lemma:second_transformation}.}\label{fig:transformations}

\end{figure}

\begin{example}
Take the graph from \cref{fig:graph} and consider the Random vertex that has Min vertices $2$ and $3$ as its neighbors. \Cref{fig:transformations} presents the outcome of the procedure described in the lemmas above when applied to this vertex.
\end{example}

\section{General case of the tropical Helton--Nie conjecture}\label{section:general}

We now generalize \cref{lemma:main_stratum_cone} to tropically convex sets in $\trop^{n}$. In order to study this case, we use the notion of homogenization of a convex set. There are many possible homogenizations of a given set. We need to use three different notions.

\begin{definition}
If $\slin$ is a tropically convex set with only finite points (\ie, $\slin \subset \R^{n}$), then we define its \emph{real homogenization} as 
\[
\rhomslin = \{ (x_{0}, x_{0} + x) \in \ \R^{n+1} \colon x \in \slin \} \, .
\]
The set $\rhomslin$ is a real tropical cone.
If $\slin \subset \trop^{n}$ is a tropically convex set, then we define its \emph{homogenization} as 
\[
\homslin = \{ (x_{0}, x_{0} + x) \in \ \trop^{n+1} \colon x \in \slin \} \, .
\]
The set $\homslin$ is a tropical cone. If $\spectra(Q^{(0)}|Q^{(1)},\dots,Q^{(n)}) \subset \trop^{n}$ is a tropical Metzler spectrahedron, then we define its \emph{formal homogenization} as the tropical Metzler spectrahedron $\fhomspectra \subset \trop^{n+1}$, $\fhomspectra \coloneqq \spectra(-\infty| Q^{(0)},Q^{(1)},\dots,Q^{(n)})$. The set $\fhomspectra$ is a tropical Metzler spectrahedral cone.
\end{definition}

\begin{lemma}\label{lemma:helton_nie_main_stratum}
Every closed semilinear tropically convex set in $\R^{n}$ is a projection of a tropical Metzler spectrahedron.
\end{lemma}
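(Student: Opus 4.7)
The plan is to homogenize $\slin$ in order to reduce to the case of real tropical cones already handled in \cref{lemma:main_stratum_cone}.

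First, I would verify that the real homogenization $\rhomslin \subset \R^{n+1}$ is a closed semilinear real tropical cone. Closedness and semilinearity follow from the corresponding properties of $\slin$, since $\rhomslin$ is the preimage of $\slin$ under the continuous semilinear map $(y_0, y_1) \mapsto y_1 - y_0$ from $\R^{n+1}$ to $\R^n$. For the cone property, let $u = (a_0, a_0 + a)$ and $v = (b_0, b_0 + b)$ be two points of $\rhomslin$ with $a, b \in \slin$, and let $\lambda, \mu \in \R$. Setting $c_0 \coloneqq \max(\lambda + a_0, \mu + b_0)$, $\lambda' \coloneqq \lambda + a_0 - c_0$ and $\mu' \coloneqq \mu + b_0 - c_0$, one checks that $(\lambda \tdot u) \tplus (\mu \tdot v) = (c_0, c_0 + c)$ where $c = (\lambda' \tdot a) \tplus (\mu' \tdot b)$ and $\lambda' \tplus \mu' = 0$. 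Tropical convexity of $\slin$ then yields $c \in \slin$, so the combination lies in $\rhomslin$.

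Next, I would apply \cref{lemma:main_stratum_cone} to obtain a real tropical Metzler spectrahedron $\spectra \subset \R^{n+1+N}$ whose projection onto the first $n+1$ coordinates is $\rhomslin$. To recover $\slin$, I would intersect $\spectra$ with the hyperplane $\{x_0 = 0\}$ and project onto the coordinates $(x_1, \ldots, x_n)$: a point $(x_1, \ldots, x_n)$ lies in $\slin$ if and only if $(0, x_1, \ldots, x_n) \in \rhomslin$, which holds if and only if there exists $y \in \R^N$ with $(0, x_1, \ldots, x_n, y) \in \spectra$.

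The remaining step, which I expect to be the main subtlety, is to show that $\spectra \cap \{x_0 = 0\}$ is again a tropical Metzler spectrahedron. The key observation is that each of the scalar inequalities $x_0 \le 0$ and $x_0 \ge 0$ is itself a $1 \times 1$ tropical Metzler spectrahedron: for instance, $x_0 \le 0$ corresponds to the condition $Q_{11}^+(x) \ge Q_{11}^-(x)$ with constant term $Q^{(0)}_{11} = 0 \in \postrop$ and coefficient $Q^{(X_0)}_{11} = \tminus 0 \in \negtrop$. Since the intersection of two tropical Metzler spectrahedra is a tropical Metzler spectrahedron obtained by block-diagonal concatenation of the defining matrices, combining $\spectra$ with the two $1 \times 1$ blocks encoding $x_0 = 0$ produces the desired tropical Metzler representation of $\slin$. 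The technical difficulty this sidesteps is that a direct substitution of $x_0 = 0$ into the matrices defining $\spectra$ may create a constant term which is a signed sum of positive and negative contributions, and such a sum need not be expressible as a single entry of a tropical Metzler matrix; the block-diagonal construction enforces the equation through auxiliary blocks rather than by substitution.
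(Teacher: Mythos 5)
Your proof is correct and follows essentially the same route as the paper: homogenize to reduce to \cref{lemma:main_stratum_cone}, then recover $\slin$ by cutting the resulting spectrahedron with $\{x_0=0\}$ and projecting. The paper states without elaboration that this intersection is again a tropical Metzler spectrahedron; your block-diagonal encoding of $x_0\leq 0$ and $x_0\geq 0$ via $1\times 1$ diagonal constraints (rather than substituting $x_0=0$, which could produce ill-defined signed sums) is exactly the right justification for that step.
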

\begin{proof}
Take any closed semilinear tropically convex set $\slin \subset \R^{n}$ and consider its real homogenization $\rhomslin$. This is a closed semilinear real tropical cone in $\R^{n+1}$. By \cref{lemma:main_stratum_cone}, $\rhomslin$ is a projection of a tropical Metzler spectrahedron $\spectra_{1} \subset \trop \times \trop^{n} \times \trop^{n'}$. Consider the set
\[
\spectra_{2} = \{ (x_{0}, x, y) \in \trop \times \trop^{n} \times \trop^{n'} \colon (x_{0},x,y) \in \spectra_{1} \land x_{0} = 0 \} \, .
\]
The set $\spectra_{2}$ is a tropical Metzler spectrahedron. Furthermore, $\slin$ is its projection.
\end{proof}

We now want to extend this result to tropically convex sets in $\trop^{n}$. In order to do this, we proceed stratum-by-stratum. This requires us to show that a tropical convex hull of finitely many projected Metzler spectrahedra is a projected Metzler spectrahedron. In the classical case of real spectrahedra, it is known that a convex hull of finitely many projected spectrahedra is a projected spectrahedron. This fact has a very short proof presented in \cite{netzer_sinn_projected_spectrahedra}. The proof in the tropical case is exactly the same (we only change the classical notation to the tropical one). Let us present this proof for the sake of completeness. 

\begin{lemma}\label{lemma:projection_of_homogenization}
A tropically convex set $\slin \subset \trop^{n}$ is a projected tropical Metzler spectrahedron if and only if its homogenization is a projected tropical Metzler spectrahedron.
\end{lemma}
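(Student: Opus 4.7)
My plan is to adapt the classical argument of Netzer and Sinn~\cite{netzer_sinn_projected_spectrahedra} for the real case, as the authors indicate. Both implications rely on elementary operations on Metzler spectrahedra that preserve the Metzler structure: formal homogenization in one direction, and slicing by $\{x_0 = 0\}$ in the other.

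For the ``only if'' implication, I would write $\slin = \pi_n(\spectra)$ where $\spectra = \spectra(Q^{(0)} | Q^{(1)},\dots,Q^{(n+k)}) \subset \trop^{n+k}$ is a tropical Metzler spectrahedron and $\pi_n$ denotes the projection onto the first $n$ coordinates. I would then consider the formal homogenization $\fhomspectra = \spectra(\zero | Q^{(0)},Q^{(1)},\dots,Q^{(n+k)}) \subset \trop^{1+n+k}$, which is by construction a tropical Metzler spectrahedral cone. The central observation is that each polynomial $\tilde Q_{ij}$ defining $\fhomspectra$ is tropically homogeneous of degree~$1$ in all $1+n+k$ variables: for every $\lambda \in \R$, one has $\tilde Q_{ij}(\lambda, x, y) = \lambda \tdot Q_{ij}(x - \lambda, y - \lambda)$, where $Q_{ij}$ is the defining polynomial of $\spectra$. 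The defining inequalities of $\fhomspectra$ at $(\lambda, x, y)$ are therefore equivalent to those of $\spectra$ at $(x - \lambda, y - \lambda)$, and consequently the projection of $\fhomspectra$ onto its first $1+n$ coordinates yields $\homslin$.

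For the converse, write $\homslin = \pi(\mathcal{T})$ with $\mathcal{T} \subset \trop^{1+n+k}$ a tropical Metzler spectrahedron. By the very definition of $\homslin$, we have $x \in \slin$ iff $(0, x) \in \homslin$, whence $\slin = \{x \in \trop^n : \exists y, \, (0, x, y) \in \mathcal{T}\}$. The crux is to encode the equality $x_0 = 0$ as a Metzler condition: the inequality $x_0 \le 0$ is cut out by the $1 \times 1$ tropical Metzler spectrahedron $\spectra(0 | \tminus 0)$, whereas $x_0 \ge 0$ is cut out by $\spectra(\tminus 0 | 0)$. Concatenating $\mathcal{T}$ with these two one-dimensional conditions via block-diagonal sum of the defining matrices produces a tropical Metzler spectrahedron equal to $\mathcal{T} \cap \{x_0 = 0\}$; projecting onto the $x$-coordinates then realizes $\slin$ as a projected tropical Metzler spectrahedron.

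The main obstacle I anticipate lies in controlling the behavior of $\fhomspectra$ over the boundary stratum $z_0 = \zero$ in the forward implication. Since the defining polynomials of $\fhomspectra$ at $z_0 = \zero$ lose their constant term, the resulting system may admit more solutions $(x, y)$ than the single origin point that $\homslin$ contains over $x_0 = \zero$ under the strict set-builder reading. Reconciling $\pi(\fhomspectra)$ with $\homslin$ will therefore require either interpreting $\homslin$ as its tropical cone closure---implicitly including the recession directions of $\slin$---or an explicit argument that excludes these spurious solutions. The backward direction, by contrast, is essentially routine once the $1 \times 1$ encoding of the equality $x_0 = 0$ is at hand.
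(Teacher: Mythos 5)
Your backward direction (slicing $\mathcal{T}$ by $x_0 = 0$ via the two $1 \times 1$ Metzler blocks) is sound and matches the paper, which simply intersects with $\{x_0 = 0\}$ and observes the result is still a tropical Metzler spectrahedron.

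Your forward direction, however, has a genuine gap, and you have identified it yourself without resolving it. Projecting the formal homogenization $\fhomspectra$ onto the first $1+n$ coordinates does \emph{not} in general yield $\homslin$: over the stratum $x_0 = \zero$, the defining polynomials of $\fhomspectra$ reduce to the cone $\spectra(\zero \mid Q^{(1)},\dots,Q^{(n+k)})$, which can contain many points $(x,y)$ with $x \neq \zero$, whereas $\homslin$ contains only the all-$\zero$ point in that stratum (since $x_0 \tdot x = \zero$ componentwise once $x_0 = \zero$). Reinterpreting $\homslin$ as a ``tropical cone closure'' is not available to you, because the Lemma is stated for $\homslin$ exactly as defined. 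The paper's fix is to augment the formal homogenization with $n$ auxiliary coordinates $z_k$ and the constraints $x_0 \tdot z_k \geq x_k^{\tdot 2}$ (i.e.\ $x_0 + z_k \geq 2x_k$), each of which is a legitimate off-diagonal Metzler minor condition on a $2 \times 2$ block. These constraints are vacuous when $x_0 \in \R$ (just take $z_k$ large), but when $x_0 = \zero$ they force $x_k = \zero$ for every $k$, which is precisely the pruning you need. Without this (or an equivalent) mechanism, your proof of the ``only if'' implication does not go through.
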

\begin{proof}
First, suppose that $\homslin$ is a projection of a tropical Metzler spectrahedron $\spectra_{1} \subset \trop \times \trop^{n} \times \trop^{n'}$. Consider the set
\[
\spectra_{2} = \{ (x_{0}, x, y) \in \trop \times \trop^{n} \times \trop^{n'} \colon (x_{0},x,y) \in \spectra_{1} \land x_{0} = 0 \} \, .
\]
The set $\spectra_{2}$ is a tropical Metzler spectrahedron. Furthermore, $\slin$ is its projection. Conversely, suppose that $\slin$ is a projection of a tropical Metzler spectrahedron $\spectra_{1} \subset \trop^{n} \times \trop^{n'}$. Consider its formal homogenization $\fhomspectra_{1} \subset \trop^{1 + n + n'}$ and take the set
\[
\spectra_{2}  = \{ (x_{0}, x, y, z) \in \trop \times \trop^{n} \times \trop^{n'} \times \trop^{n} \colon (x_{0},x, y) \in \fhomspectra_{1} \, \land \, \forall k \in [n], x_{0} + z_{k} \ge 2x_{k} \} \, .
\]
The set $\spectra_{2}$ is a tropical Metzler spectrahedron. We will show that $\homslin$ is a projection of $\spectra_{2}$. Take any point $x \in \slin$. Then, there exists $y$ such that $(0,x,y) \in \fhomspectra_{1}$. Therefore, $(x_{0}, x + x_{0}e, y + x_{0}e) \in \fhomspectra_{1}$ for any $x_{0} \in \R$. If we take $z_{k}$ large enough, then $(x_{0}, x + x_{0}e, y + x_{0}e,z) \in \spectra_{2}$. Moreover, we have $-\infty \in \spectra_{2}$. This shows that $\homslin$ is included in the projection of $\spectra_{2}$. Conversely, suppose that $(x_{0}, x, y, z) \in \spectra_{2}$. If $x_{0} = -\infty$, then $x = -\infty$ and hence $(x_{0}, x) \in \homslin$. If $x_{0} \neq -\infty$, then we have $(0, x - x_{0}e, y - x_{0}e, z - x_{0}e) \in \spectra_{2}$. Hence $(0, x - x_{0}e, y - x_{0}e) \in \fhomspectra_{1}$, $(x - x_{0}e, y - x_{0}e) \in \spectra_{1}$, and $x - x_{0}e \in \slin$. Therefore $(x_{0}, x) \in \homslin$.
\end{proof}

\begin{lemma}\label{lemma:union_of_projections}
Suppose that $\slin_{1}, \slin_{2} \subset \trop^{n}$ are projected tropical Metzler spectrahedra. Then $\tconv(\slin_{1} \cup \slin_{2})$ is a projected tropical Metzler spectrahedron.
\end{lemma}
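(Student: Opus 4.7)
The plan is to mirror the classical Netzer--Sinn construction for convex hulls of projected spectrahedra, now using the homogenization technique from \cref{lemma:projection_of_homogenization}. If one of $\slin_1, \slin_2$ is empty, then $\tconv(\slin_1 \cup \slin_2)$ coincides with the other set (which is tropically convex, hence equals its own tropical convex hull), so the result follows from the assumption; I may thus assume both $\slin_1$ and $\slin_2$ are nonempty.

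By \cref{lemma:projection_of_homogenization}, it suffices to show that the homogenization $(\tconv(\slin_1 \cup \slin_2))^h$ is a projected Metzler spectrahedron. A direct calculation based on \cref{convex_hull_of_union} shows that this homogenization coincides with the tropical Minkowski sum $\slin_1^h \tplus \slin_2^h \coloneqq \{a \tplus b : a \in \slin_1^h,\, b \in \slin_2^h\}$. Indeed, given $z \in \tconv(\slin_1 \cup \slin_2)$, write $z = (\lambda_1 \tdot y_1) \tplus (\lambda_2 \tdot y_2)$ with $y_i \in \slin_i$ and $\lambda_1 \tplus \lambda_2 = 0$; then $(x_0, x_0 + z) = (x_0 + \lambda_1, x_0 + \lambda_1 + y_1) \tplus (x_0 + \lambda_2, x_0 + \lambda_2 + y_2)$, whose two summands belong to $\slin_1^h$ and $\slin_2^h$ respectively. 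The converse inclusion is obtained analogously by rescaling along the homogenizing coordinate, using the cone structure of $\slin_i^h$.

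By \cref{lemma:projection_of_homogenization} applied to each $\slin_i$, write $\slin_i^h = \pi_i(\spectra_i)$ where $\spectra_i$ is a tropical Metzler spectrahedron whose auxiliary variable block I denote $u_i$. Introduce fresh vectors $v_1, v_2, w \in \trop^{n+1}$ and consider the set of tuples $(w, v_1, u_1, v_2, u_2)$ satisfying the Metzler constraints $(v_i, u_i) \in \spectra_i$ together with, for every coordinate $k \in [n+1]$, the three inequalities $w_k \geq (v_1)_k$, $w_k \geq (v_2)_k$, and $(v_1)_k \tplus (v_2)_k \geq w_k$. The latter three together force the componentwise tropical equality $w_k = (v_1)_k \tplus (v_2)_k$, so that the projection of this set onto the $w$ coordinates is exactly $\slin_1^h \tplus \slin_2^h$.

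The subtle point --- where the main work actually occurs --- is that the inequality $(v_1)_k \tplus (v_2)_k \geq w_k$, which at first reading is the disjunction ``$(v_1)_k \geq w_k$ or $(v_2)_k \geq w_k$'', is nevertheless encodable as a single Metzler diagonal constraint. Indeed, the tropical polynomial $Q(X) \coloneqq (v_1)_k \tplus (v_2)_k \tminus w_k$ has positive part $Q^+(X) = (v_1)_k \tplus (v_2)_k$ and negative part $Q^-(X) = w_k$, so the defining inequality $Q^+ \geq Q^-$ of a Metzler spectrahedron gives precisely the desired relation. Assembling all of these constraints with the Metzler representations of $\spectra_1$ and $\spectra_2$ as diagonal blocks of a larger symmetric Metzler matrix (filled with $\zero$ elsewhere) yields a tropical Metzler spectrahedron whose projection onto $w$ coincides with $\slin_1^h \tplus \slin_2^h = (\tconv(\slin_1 \cup \slin_2))^h$. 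A final application of \cref{lemma:projection_of_homogenization} then produces a projected Metzler representation of $\tconv(\slin_1 \cup \slin_2)$ itself.
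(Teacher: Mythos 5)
Your proof follows the same route as the paper: reduce to homogenizations via \cref{lemma:projection_of_homogenization}, establish the identity $\homslin = \homslin_1 \tplus \homslin_2$ using \cref{convex_hull_of_union}, and conclude. The paper's proof simply asserts ``the claim follows'' after the Minkowski identity, whereas you correctly isolate the genuinely non-trivial ingredient in the tropical setting --- that the constraint $w_k = (v_1)_k \tplus (v_2)_k$, despite its disguised disjunction, is encodable by diagonal Metzler inequalities of the form $Q^+\ge Q^-$ --- and you carry this out correctly, also taking care of the degenerate case where one of the $\slin_i$ is empty (which the paper's invocation of \cref{convex_hull_of_union} tacitly excludes).
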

\begin{proof}
Let $\slin = \tconv(\slin_{1} \cup \slin_{2})$ and consider
\[
\homslin_{1} \tplus \homslin_{2} = \{ x \in \trop^{n+1} \colon \exists (u,w) \in \homslin_{1} \times \homslin_{2}, \, x = u \tplus w \} \, .
\]
Observe that we have the identity $\homslin = \homslin_{1} \tplus \homslin_{2}$. Indeed, since $\slin_{1} \subset \slin$, we have $\homslin_{1} \subset \homslin$. Similarly, $\homslin_{2} \subset \homslin$. Therefore, we have $\homslin_{1} \tplus \homslin_{2} \subset \homslin$. Conversely, take a point $z \in \homslin$. By \cref{convex_hull_of_union}, we can write $z$ as
\[
z = \Bigl( z_{0}, z_{0} \tdot \bigl( (\lambda \tdot x) \tplus (\mu \tdot y) \bigl) \Bigr) \in \homslin \, ,
\]
where $\lambda \tplus \mu = 0$, $x \in \slin_{1}$, and $y \in \slin_{2}$. Then $z = \tilde{x} \tplus \tilde{y}$, where 
\begin{equation*}
\begin{aligned}
\tilde{x} &= (\lambda \tdot z_{0}, (\lambda \tdot z_{0}) \tdot x) \in \homslin_{1} \, , \\
\tilde{y} &= (\mu \tdot z_{0}, (\mu \tdot z_{0}) \tdot y) \in \homslin_{2} \, .
\end{aligned}
\end{equation*}
Hence $\homslin = \homslin_{1} \tplus \homslin_{2}$ and the claim follows from \cref{lemma:projection_of_homogenization}.
\end{proof}

We are now ready to present the proof of \cref{theorem:tropical_convex_sets}.

\begin{proof}[Proof of \cref{theorem:tropical_convex_sets}]
The equivalence between \cref{main:item1} and \cref{main:item2} is given in \cref{tropical_conv_salg}. The implication from \cref{main:item2} to \cref{main:item3} follows from \cref{lemma:helton_nie_main_stratum}. We now prove the implication from \cref{main:item3} to \cref{main:item4}. Let $\slin \subset \trop^{n}$ be as in \cref{main:item3}. If $\slin$ is empty, then it is a tropical Metzler spectrahedron defined by a single inequality $-\infty \ge 0$. Otherwise let $K \subset [n]$ be any nonempty set such that the stratum $\slin_{K} \subset \R^{\card{K}}$ is nonempty. The set $\slin_{K}$ is a projection of a tropical Metzler spectrahedron $\spectra_{K} \subset \trop^{\card{K}} \times \trop^{n'}$. For any $x \in \trop^{n}$ we denote by $x_{K} \in \trop^{\card{K}}$ the subvector formed by the coordinates of $x$ with indices in $K$. Furthermore, let $X_{K}\subset \trop^{n}$ denote the set
\[
X_{K} = \{ x \in \trop^{n} \colon x_{k} \neq -\infty \iff k \in K\} \, .
\]
The set $\slin \cap X_{K}$ is a projection of a tropical Metzler spectrahedron defined as
\[
\tilde{\spectra}_{K} = \{ (x,y) \in \trop^{n} \times \trop^{n'} \colon (x_{K},y) \in \spectra_{K} \, \land \, \forall k \notin K, \, -\infty \ge x_{k} \} \, .
\]
Moreover, for $K = \emptyset$, let us denote $X_{\emptyset} = -\infty$. Note that the intersection $\slin \cap X_{\emptyset}$ is either empty or is equal to $-\infty$, and that $-\infty$ is a tropical Metzler spectrahedron (defined by the inequalities $-\infty \ge x_{k}$ for all $k \in [n]$). Hence, we have $\slin = \cup_{K \subset [n]} \, \slin \cap X_{K} = \tconv(\cup_{K \subset [n]} \slin \cap X_{K})$. Therefore, the claim follows from \cref{lemma:union_of_projections}. Finally, to prove the implication \cref{main:item4} to \cref{main:item1}, let $\slin \subset \trop^{n}$ be a projection of a tropical Metzler spectrahedron $\spectra \subset \trop^{n} \times \trop^{n'}$. By \cref{metzler_is_spectra}, there is a spectrahedron $\bspectra \in \nnpuiseux^{n + n'}$ such that $\val(\bspectra) = \spectra$. Let $\bo \pi \colon \puiseux^{n+n'} \to \puiseux^{n}$ denote the projection on the first $n$ coordinates. Then $\slin = \val(\bo \pi(\bspectra))$.
\end{proof}

\begin{remark}\label{uniformity}
Consider a convex semialgebraic subset $\bsalg$ over $\puiseux^n$. \Cref{th-main} shows that there exist integers $p$ and $m$ such that $\bsalg$ has the same image by the valuation as a projection of some spectrahedron over $\puiseux^{p}$ associated with matrices of size $m \times m$. The integers $p$ and $m$ appearing in the proof of this theorem have the following remarkable uniformity property: 
if $\bsalg$ is given as a union of finitely many basic semialgebraic sets of the form~\cref{eq:basic}, then $p, m$ are bounded from above by a number $N$ that depends only on the degrees and the number of polynomials involved in the description of $\bsalg$ (i.e., that $N$ is independent of the coefficients of these polynomials). The proof, however, is quite involved. First, one should observe that, given only the degrees and the number of polynomials describing $\bsalg$, the Denef--Pas quantifier elimination creates a finite set of $\logroups$-formulas such that every stratum of $\val(\bspectra)$ is described by a formula from this set. Second, \cref{theorem:tropical_convex_sets} gives a tropical Metzler spectrahedron $\spectra$ such that $\val(\bspectra)$ is its projection. A careful examination of the proof presented here shows that the dimension and the size of the matrices defining $\spectra$ can be bounded by a quantity that depends only on the aforementioned $\logroups$-formulas (and not on the particular choice of their parameters). This gives the desired bound $N$.
\end{remark}
\begin{remark}\label{rk-anyvaluedfield}
Given the bound of~\cref{uniformity}, the Denef--Pas quantifier elimination implies that our main result (\cref{th-main}) is valid not only over the field of Puiseux series considered here, but over every real closed valued field equipped with a nontrivial and convex valuation.
\end{remark}

\section{Concluding remarks}
We showed that the convex semialgebraic sets and the projections of spectrahedra over the nonarchimedean field of real Puiseux series have the same images by the nonarchimedean valuation. We gave an explicit representation for these images, as the subfixed point sets of semilinear monotone homogeneous maps 
(dynamic programming operators of zero-sum stochastic games with perfect information).
One may ask 
whether more insight on the projections
of spectrahedra over nonarchimedean fields or over the field of real numbers can be gotten by tropical methods. In this respect,
we note that we considered the simplest possible tropicalization,
looking at the image of Puiseux series by their ordinary valuations. 
We also leave it as a further work to see whether more sophisticated tropicalizations, capturing also the sign, or  higher order approximations of Puiseux series (spaces of jets) may
be exploited. 

\bibliographystyle{alpha}

\end{document}